\documentclass[11pt,a4paper,twoside]{article}
\usepackage{amsmath,amsfonts,amssymb,amsthm,graphics}
\usepackage[mathscr]{euscript}
\setlength{\oddsidemargin}{0in}
\setlength{\evensidemargin}{0in}
\setlength{\textwidth}{6.5in}
\setlength{\topmargin}{0pt}
\setlength{\textheight}{9in}
\setlength{\parskip}{0.4em}
\setlength{\parindent}{0pt}



\newtheorem{Theorem}[equation]{Theorem}
\newtheorem{Corollary}[equation]{Corollary}
\newtheorem{Proposition}[equation]{Proposition}
\newtheorem{Lemma}[equation]{Lemma}
\newtheorem{Remark}[equation]{Remark}

\newtheorem{Example}[equation]{Example}

\def\Section#1{\section{#1}\setcounter{equation}{0}}

\parskip=3pt plus1pt


\def\bdots{\mathinner{\mkern1mu\raise1pt\hbox{.}\mkern2mu\raise4pt\hbox{.}
           \mkern2mu\raise7pt\vbox{\kern7pt\hbox{.}}\mkern1mu}}

\def\Fx{F^\times}
\def\Ex{E^\times}

\def\oF{{\mathfrak o}_F}

\def\pF{{\mathfrak p}_F}
\def\kF{{k_F}}
\def\piF{{\varpi}_F}

\def\oE{{\mathfrak o}_E}

\def\pE{{\mathfrak p}_E}

\def\oEi{{\mathfrak o}_{E_i}}
\def\pEi{{\mathfrak p}_{E_i}}

\def\ov#1{\overline {#1}}

\def\fa{{\mathfrak a}}
\def\fb{{\mathfrak b}}

\def\vF{{\nu_F}}
\def\vE{{\nu_E}}

\def\vlambda{{\nu_\L}}
\def\vlambdai{{\nu_{\L_i}}}
\def\vlambdaone{{\nu_{\L_1}}}
\def\vlambdatwo{{\nu_{\L_2}}}
\def\vEone{{\nu_{E_1}}}
\def\vEtwo{{\nu_{E_2}}}
\def\vEi{{\nu_{E_i}}}

\def\End{{\hbox{\rm End}}}
\def\Aut{{\hbox{\rm Aut}}}
\def\Hom{{\hbox{\rm Hom}\,}}
\def\max{\hbox{\rm max}\,}
\def\min{\hbox{\rm min}\,}
\def\dim{\hbox{\rm dim}\,}
\def\ker{\hbox{\rm ker}\,}

\def\det{\hbox{\rm det}\,}

\def\Ind{{\hbox{\rm Ind}}}
\def\cInd{c{\hbox{\rm-Ind}}}

\def\char{\hbox{\rm char}\,}

\DeclareMathOperator{\Sp}{Sp}
\DeclareMathOperator{\GSp}{GSp}
\DeclareMathOperator{\GL}{GL}

\def\U{{\mathrm U}}

\def\tr{{\hbox{\rm tr}}}

\def\trEF{{\hbox{\rm tr}}_{E/F}}

\def\trEOF{{\hbox{\rm tr}}_{E_0/F}}
\def\trEiF{{\hbox{\rm tr}}_{E_i/F}}

\def\Clob{{\mathcal C}(\Lambda,0,\beta)}

\def\CC{{\mathcal C}}

\def\BZ{{\mathbb Z}}

\def\BC{{\mathbb C}}
\def\BN{{\mathbb N}}
\def\BL{{\mathbb L}}
\def\BP{{\mathbb P}}

\def\bok{{\mathbf k}}

\def\boI{{\mathbf 1}}

\def\bk{{\mathbf k}}

\def\la{\langle}
\def\ra{\rangle}

\def\a{{\alpha}}
\def\b{{\beta}}
\def\g{{\gamma}}
\def\d{{\delta}}
\def\e{{\varepsilon}}
\def\k{{\kappa}}
\def\l{{\lambda}}
\def\s{{\sigma}}
\def\t{{\tau}}
\def\th{{\theta}}
\def\L{{\Lambda}}
\def\Th{{\Theta}}

\def\tH{{\tilde H}}
\def\tJ{{\tilde J}}

\def\oG{{\bar G}}
\def\oH{{\bar H}}
\def\oJ{{\bar J}}

\def\bs{\boldsymbol}

\def\labelenumi{{\rm(\roman{enumi})}}
\def\theenumi{\roman{enumi}}



\begin{document}

\title{Genericity of supercuspidal representations of $p$-adic $\Sp_4$}

%
\author{Corinne Blondel and Shaun Stevens%
\thanks{The second author would like to thank the University of Paris
7 for its hospitality in Spring 2005, during which time most of this
research was undertaken.} 
} 
\date{\today}
\maketitle
\begin{abstract}
We describe the supercuspidal representations of $\Sp_4(F)$, for
$F$ a non-archimedean local field of residual characteristic 
different from $2$, and determine which are generic. 
\end{abstract}
%
%
%

\section*{Introduction}

Let $F$ be a locally compact non-archimedean local field, with ring of
integers $\oF$, maximal ideal $\pF$, and residue field $\kF$. Whereas
every irreducible supercuspidal representation of $\GL_N(F)$ 
is \emph{generic} -- i.e., has a Whittaker model -- this is no longer
true for classical groups over $F$. The existence of non-generic
supercuspidal representations of classical groups is significant and
bears many consequences; let us cite a few of them in local
representation theory (global consequences are heavy as well, see for
instance~\cite{HPS}): First, of course, the fact that the definition
of the $L$-function attached to a representation of a classical group
is available only for generic representations, thanks to the work of
Shahidi; in particular, a characterization through local factors of
the local Langlands correspondence for a classical group is not fully
available. Also, reducibility of parabolic induction is completely
understood for $\GL_N(F)$ while in classical groups results are
complete only in the case of generic inducing representations. 

The most celebrated example of a non-generic supercuspidal
representation is the representation $\Theta_{10}$ of $\Sp_4(F)$,
induced from the inflation to $\Sp_4(\oF)$ of the cuspidal unipotent
representation $\theta_{10}$ of $\Sp_4(\kF)$ constructed by Srinivasan
(se~\cite{HPS} again). To our knowledge, until recently, most known
non-generic supercuspidal representations were level zero, in
particular level zero representations induced from the inflation of a
cuspidal unipotent representation of the reductive quotient of a
maximal special parahoric subgroup. Our purpose in this paper is to
exhaust the non-generic supercuspidal representations of $\Sp_4(F)$ in
odd residual characteristic.

It is certainly no surprise, and part of the folklore in the subject,
that level zero supercuspidal representations of $\Sp_4(F)$ coming
from cuspidal representations of $\Sp_4(\kF)$ are generic if and only
if the corresponding cuspidal representation is, and that the only
non-generic cuspidal representation of $\Sp_4(\kF)$ is Srinivasan's
$\theta_{10}$. For the sake of completeness we include proofs of these
results. Actually we prove that generic level zero supercuspidal
representations of $\Sp_{2N}(F)$ are obtained by inducing the
inflation of a generic cuspidal representation of the reductive
quotient of a maximal parahoric subgroup of $\Sp_{2N}(F)$, on the
condition that this parahoric subgroup is special.

For positive level supercuspidal representations, the situation is not
as simple. Our point of view is to use the exhaustive construction
given by the second author in~\cite{S5}: those representations are
induced from a set of types, generalizing Bushnell-Kutzko types for
$\GL_N(F)$. We give necessary and sufficient conditions on those types
for the induced representation to be generic. We obtain surprisingly
many (at least with respect to our starting point) non-generic
representations.

Let us be more precise. Positive level supercuspidal representations
of $\Sp_4(F)$ fall into four categories, according to the nature of
the skew semi-simple stratum $[\L,n,0,\b]$ at the bottom level of the
construction (\S\ref{construction}): 

\begin{itemize}
\item[(I)] The first category starts with a skew simple stratum with
corresponding field extension $F[\b]$ of dimension $4$ over $F$. Here
non-generic representations are obtained only when $F[\b]/F$ is the
biquadratic extension, and when a binary condition involving $\b$ and
the symplectic form is fulfilled.
\item[(II)] The second category starts with a skew simple stratum with
corresponding field extension $F[\b]$ of dimension $2$ over
$F$. Non-generic representations are obtained whenever the
$F[\b]$-skew hermitian form attached to the symplectic form is
anisotropic (or equivalently, when the quotient group $J/J^1$ involved
in the construction, which is a reductive group over $k_F$, is
anisotropic).
\item[(III)] In the third category, the stratum is the orthogonal sum
of two two-dimensional skew simple strata $[\L_i,n_i,0,\b_i]$,
$i=1,2$. Non-genericity occurs only when $F[\b_1]$ is isomorphic to
$F[\b_2]$, and again when a binary condition involving $\b_1 \b_2$ and
the symplectic form is fulfilled. 
\item[(IV)] All representations in the fourth category -- when the
stratum is the orthogonal sum of a skew simple stratum and a null
stratum, both two-dimensional -- are non-generic.
\end{itemize}

\medskip

The main character in the proof is indeed a would-be character: To a
stratum as above is attached a function $\psi_\b$ on $\Sp_4(F)$ and
the crucial question is whether there exists a maximal unipotent
subgroup $U$ of $\Sp_4(F)$ on which $\psi_\b$ actually defines a
character (see \cite{BH}); this question is easily solved
in~\S\ref{S4}, where Proposition~\ref{Uderprop} lists the exact
conditions alluded to above. Whenever there is no such subgroup $U$,
we prove in~\S\ref{S6} that the corresponding representations are not
generic, using a criterion of non-genericity given
in~\S\ref{S1criterion} (this says essentially that if a cuspidal
representation $\cInd_J^{\Sp_4(F)}\l$ is generic, then there is a long root
subgroup on which $\lambda$ is trivial).

Now assume that there is a maximal unipotent subgroup $U$ on which
$\psi_\b$ is a character. A type attached to our stratum is a
representation  $\kappa \otimes \sigma$ of a compact open subgroup
$J$, where $\kappa$ is a suitable $\b$-extension attached to the
stratum and $\sigma$ is a cuspidal representation of some finite
reductive group $J/J^1$ attached to the stratum
(see~\S\ref{construction}). The fundamental step is
Theorem~\ref{kappainduced}, stating that the representation $\kappa$
contains the character $\psi_\b$ of $J \cap U$. This implies
genericity in cases~(I) and~(III), where $\sigma$ is just a character
of an anisotropic group. But for cases~(II) and~(IV) (in which, we
should add, $\psi_\b$ defines a {\it degenerate} character of $U$) we
have to deal with the component $\sigma$, with opposite effects. In
case~(II), the inflation of $\sigma$ contains the restriction to
$J\cap U$ of a character of $U$ and genericity follows. On the
contrary, in case~(IV), the inflation of $\sigma$ does not contain
the restriction to $J\cap U$ of a character of $U$ and the resulting
cuspidal representation is not generic.

\medskip

We have several remarks to add concerning these results. 
First, genericity for positive level supercuspidal representations of
$\Sp_4(F)$ only depends on the stratum itself, on the symplectic form,
and on the representation $\sigma$ (the ``level 0 part''). It does not
depend on the choice of a semi-simple character attached to the
stratum.

The second remark is that the proofs are quite technical, often on a
case-by-case basis. The present work can of course be regarded as a
first step towards understanding non-genericity in classical groups,
but even the case of $\Sp_{2N}(F)$ might not be just an easy
generalisation. In particular, the precise conditions for genericity 
are surprisingly complicated but it seems to us that they do not
admit a simple unified description, as for level zero
representations. For example, non-generic positive level supercuspidal
representations can be induced from either special or non-special
maximal compact subgroups of $\Sp_4(F)$, and likewise for generic
representations.

Finally, we have deliberately stuck to the construction of supercuspidal
representations of $\Sp_4(F)$ via types. Another very fruitful point
of view uses Howe's correspondence. Indeed, in a very recent work
(\cite{GT}), Gan and Takeda study the Langlands correspondence for
$\GSp_4(F)$ and obtain in the process a classification of non-generic
supercuspidal representations of $\GSp_4(F)$ in terms of Howe's
correspondence; they also announce a sequel dealing with $\Sp_4(F)$. A
dictionary between these two points of view would of course be very
interesting, especially if it can provide some insight about the way
non-generic supercuspidal representations of $\Sp_4(F)$ fit into
$L$-packets. For example, in case (I), the genericity of the
supercuspidal representation depends on the embedding of $\b$ in the
symplectic Lie algebra: up to the adjoint action of $\Sp_4(F)$, there
are two such embeddings, precisely one of
which gives rise  to non-generic representations. This suggests that
representations in these two sets might be paired to form $L$-packets. 
A closer
investigation of such phenomena would certainly deserve some effort.

\section*{Notation}

Let $F$ be a locally compact non-archimedean local field, with ring of
integers $\oF$, maximal ideal $\pF$, residue field $\kF$ and odd residual
characteristic $p=\char\kF$. On occasion $\varpi_F$ will denote a uniformizing element of $F$. 
Similar notations will be used for field extensions of $F$. 
We let $\nu_F$ denote the additive valuation of $F$, normalised 
so that $\nu_F(F^\times)= \mathbb Z$. We fix, once and for all, an additive
character $\psi_F$ of $F$ with conductor $\pF$.

Let $V$ be a $2N$-dimensional $F$-vector space, equipped with a
nondegenerate alternating form $h$. By a {\it symplectic basis\/} for
$V$, we mean an ordered basis $\{e_{-N},...,   e_{-1}, e_1 ,...,e_N\}$
such that,
for $1\le i,j\le N$,
$$
h(e_i,e_j)\ =\ h(e_{-i},e_{-j})\ =\ 0,\quad h(e_{-i},e_j)=\d_{ij},
$$
where $\d_{ij}$ is the Kronecker delta. 

Let $A=\End_F(V)$ and let $\ov{\phantom{a}}$ denote the adjoint
anti-involution on $A$ associated to $h$, so
$$
h(av,w)\ =\ h(v,\ov{a}w),\quad\hbox{ for }a\in A,\, v,w\in V.
$$
We will let $G$ be the corresponding symplectic group 
$G = \{ g \in \GL_F(V) / \ov{g} = g^{-1} \}$, or $G = \Sp_{2N}(F)$ 
whenever a symplectic basis is fixed. For most of the paper
 $N$ will be $2$. Similarly $\bar G$ will denote either 
 $\GL_F(V)$ or $\GL_{2N}(F)$. 
 
Skew semi-simple strata in $A$ are the  basic objects in what
follows. We recall briefly the essential notations attached to those
objects and refer to~\cite{S4} for definitions.  

Let $[\L,n,0,\b]$ be a skew semisimple stratum in $A$. Then $\L$ is a
self-dual lattice sequence  in $V$ and defines a decreasing filtration
$\{ \mathfrak a_i(\L), i \in \mathbb Z\}$  of $A$ by $\oF$-lattices
$\mathfrak a_i(\L) = \{ x \in A / \forall k \in \mathbb Z , \ x \L(k)
\subseteq \L(k+i) \}$. We put $\mathfrak a (\L) = \mathfrak a_0(\L)$,
a self-dual $\oF$-order in $A$. We will also need $P(\L) = \mathfrak a
(\L)^\times \cap G$ and $P_i(\L) = (1 +  \mathfrak a_i(\L)) \cap G$
for $i \ge 1$. Note that the lattice sequence $\L$ gives rise to a
valuation $\nu_\L$ on $V$ (\ref{SA.1}) and the filtration $\{
\mathfrak a_i(\L), i \in \mathbb Z\}$ gives rise to a valuation on
$A$, also denoted by $\nu_\L$.  

Next, $\beta$ is a skew element in $A$: $\ov{\beta} = - \beta$, and 
$n$ is a positive integer with $n= - \nu_\L (\beta)$. Furthermore 
the algebra $E = F[\beta]$ is a sum of fields $E=\oplus_{i=1}^l E_i$, 
corresponding to a decomposition $V = \perp_{i=1}^l V^i$ of $V$ as an
orthogonal direct sum, and accordingly of $\L$: $\L = \Sigma_{i=1}^l
\L^i$ with $\L^i(k)=\L(k)\cap V^i$, and of $\beta$: $\beta =
\Sigma_{i=1}^l \beta_i$. The centralizer $B$ of $\beta$ in $A$ is $B
= \oplus_{i=1}^l B_i$ where $B_i$ is the centralizer of $\beta_i$ in $
\End_F(V^i)$.  
 
Last, for $\b$ a skew element of $A$ we define $\psi_\b$ as the
following function on $G$: $\psi_\b (x) = \psi\left( \tr (\b
(x-1))\right)$, $x \in G$. 

\Section{Generalities on genericity}\label{S1}

\subsection{Genericity}

The results of this subsection are valid in a much more general setting
than the remainder of this paper so we temporarily suspend our usual
notations. 

Let $G= \mathbf G(F)$ be the group of $F$-rational points of  a connected reductive
algebraic group $\mathbf G$ defined over $F$. Let $\mathbf S$ be a maximal $F$-split torus in $\mathbf G$
with $\mathbf G$-centralizer $\mathbf T$, let $\mathbf B$ be an $F$-parabolic 
 subgroup of $\mathbf G$ with Levi component $\mathbf T$, and let $\mathbf U$ be the unipotent radical of $\mathbf B$.

Let $\chi$ be a smooth (unitary) character of $U= \mathbf U(F)$. The torus $S = \mathbf S(F)$ acts
on the set of such characters by conjugation. We say that $\chi$ is
{\it nondegenerate\/} if its stabilizer for this action is just the
centre $Z$ of $G$.

\begin{Example}\rm Let $G=\Sp_4(F)$, which we write as a group of 
matrices with respect to some symplectic basis, let $T$ be the torus
of diagonal matrices, and let $U$ be the subgroup of upper triangular
unipotent matrices in $G$. Any character $\chi$ of $U$ is given by
$$
\chi\begin{pmatrix} 1&u&x&y\\ 0&1&v&x\\ 0&0&1&-u\\ 0&0&0&1
\end{pmatrix}\ =\ \psi_F(au+bv),
$$
for some $a,b\in F$, and it is easy to see that $\chi$ is
nondegenerate if and only if $a,b$ are both non-zero. Moreover, there
are four orbits of nondegenerate characters of $U$, given by the class
of $b$ in $\Fx/(\Fx)^2$.
\end{Example}

Returning to a general connected reductive group $G$, 
let $\pi$ be a smooth irreducible representation of $G$. We say
that $\pi$ is {\it generic\/} if there exist $U= \mathbf U(F)$ as above 
 and a nondegenerate character $\chi$ of $U$ such that
$$
\Hom_G(\pi,\Ind_U^G\chi)\ne 0.
$$
Note that, since all such subgroups $U$ are conjugate in
$G$ we may choose to fix one. Moreover, we need only consider
nondegenerate characters $\chi$ up to $T$-conjugacy. A basic result
here is:

\begin{Theorem}[\cite{R} Theorem 3] Assume $\mathbf G$ is split over $F$.
Let $\pi$ be a smooth irreducible
representation of $G$ and let $\chi$ be a nondegenerate character of a
maximal unipotent subgroup $U$ of $G$. Then
$$
\dim_{\BC}\Hom_G(\pi,\Ind_U^G\chi)\le 1.
$$
\end{Theorem}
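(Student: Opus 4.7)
The plan is to follow the Gelfand--Kazhdan strategy, in the form given by Rodier, which passes through a statement about bi-equivariant distributions on $G$.

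The first step is to use Frobenius reciprocity to identify $\Hom_G(\pi, \Ind_U^G \chi)$ with the space of $(U, \chi)$-Whittaker functionals on the space of $\pi$, so the problem becomes: show that any smooth irreducible $\pi$ has at most one linearly independent such functional. The second step is a general reduction: if one can show that every distribution $T$ on $G$ satisfying a suitable $(U, \chi) \times (U, \chi^{-1})$-bi-equivariance is invariant under a well-chosen anti-involution $\iota$ of $G$ that preserves $U$ and fixes $\chi$, then a formal Gelfand--Kazhdan argument produces multiplicity one: two independent Whittaker functionals on $\pi$ would yield, via the matrix-coefficient construction, a distribution which is at once $\iota$-symmetric and $\iota$-antisymmetric, a contradiction.

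The technical core is thus the $\iota$-invariance of bi-equivariant distributions, which I would prove using the Bruhat decomposition $G = \bigsqcup_{w \in W} U T \dot{w} U$, available because $\mathbf G$ is split over $F$. For each Weyl element $w \neq w_0$ (the longest), there is a simple root $\alpha$ with $w\alpha$ again positive; analysing the bi-equivariance condition on the cell $UT\dot w U$ forces $\chi$ to be trivial on the simple root subgroup $U_\alpha$, contradicting nondegeneracy. Hence the only Bruhat cell supporting a non-zero bi-equivariant distribution is the open cell $UT\dot w_0 U$. On this cell, the parametrisation by $U \times T \times U$ reduces bi-equivariance to a distribution on $T$, showing the space is at most one-dimensional; the anti-involution $\iota$, constructed from $g \mapsto g^{-1}$ combined with an appropriate twist by $\dot w_0$ and the Chevalley involution, acts trivially on this one-dimensional space by direct verification.

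The main obstacle is this Bruhat-cell vanishing: one must check uniformly, for every Weyl element other than $w_0$, that at least one simple root subgroup survives the conjugation in a way that conflicts with nondegeneracy of $\chi$. This is a combinatorial statement about the root datum combined with a distributional finiteness argument, and it is the only place where splitness of $\mathbf G$ is essentially used (to have a genuine Bruhat decomposition with trivial Weyl-element issues). Once this vanishing is established, the passage from $\iota$-invariance to $\dim_{\BC}\Hom_G(\pi, \Ind_U^G \chi) \le 1$ is purely formal.
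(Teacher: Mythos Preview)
The paper does not prove this statement: it is quoted as a known result, with a citation to Rodier's paper \cite{R}, Theorem~3, and no proof is given or sketched. Your proposal is a correct outline of precisely the Gelfand--Kazhdan argument that Rodier carries out in that reference, so there is nothing to compare against in the present paper itself.
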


On the other hand, and again in a general $G$, 
when dealing with supercuspidal representations we may not bother about 
the nondegeneracy of the character in the definition of genericity, a fact
that will be useful in the sequel. Indeed: 

\begin{Lemma}\label{nondegenerate}  
Assume $\mathbf G$ is split over $F$.
Let $\pi$ be a smooth irreducible supercuspidal 
representation of $G$. Let $\mathbf U$ be 
a maximal connected unipotent subgroup of $\mathbf G$ and let  $\chi$ be a  
character of $U= \mathbf U(F)$ such that 
$$
\Hom_G(\pi,\Ind_U^G\chi)\ne 0.
$$
Then the character $\chi$ is nondegenerate. 
\end{Lemma}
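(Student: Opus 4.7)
The plan is to argue by contradiction: assume $\chi$ is degenerate and deduce that $\Hom_G(\pi,\Ind_U^G\chi) = 0$. The fundamental input is that, $\pi$ being supercuspidal, its Jacquet module $\pi_N$ vanishes for the unipotent radical $N$ of every proper $F$-parabolic subgroup of $G$; the aim is therefore to exhibit a proper standard parabolic $P = MN$ such that $\chi|_N$ is trivial, for then any $\chi$-Whittaker functional on $\pi$ will descend through $\pi_N = 0$ via Frobenius reciprocity.

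The character $\chi$, being a character of $U$, factors through the abelianization $U/[U,U]$, which in the split case is isomorphic to $\prod_{\alpha \in \Delta} U_\alpha$ indexed by the simple roots of $\mathbf{G}$ relative to $\mathbf{T}$; hence $\chi$ is determined by its restrictions $\chi_\alpha = \chi|_{U_\alpha}$ for $\alpha \in \Delta$. Since the simple roots form a basis of the character lattice of $S/Z$, the assumption that $\chi$ is degenerate (stabilizer in $S$ strictly larger than $Z$) is equivalent to the existence of some simple $\alpha$ with $\chi_\alpha$ trivial. Fix such an $\alpha$ and take $P_\alpha = M_\alpha \ltimes N_\alpha$ to be the standard maximal parabolic whose Levi $M_\alpha$ has root system generated by $\Delta \setminus \{\alpha\}$.

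The key observation will be that $\chi$ vanishes on $N_\alpha$: by construction $N_\alpha$ is generated by the root subgroups $U_\beta$ for those positive roots $\beta$ whose expansion in simple roots involves $\alpha$, and each such $\beta$ is either $\alpha$ itself (where $\chi|_{U_\beta} = \chi_\alpha = 1$ by choice) or a non-simple positive root (where $U_\beta \subset [U,U]$, so $\chi|_{U_\beta} = 1$ automatically). Given this, any non-zero $\ell \in \Hom_U(\pi,\chi)$ satisfies $\ell(\pi(u)v) = \ell(v)$ for every $u \in N_\alpha$ and hence factors through a non-zero functional on $\pi_{N_\alpha}$; but $P_\alpha$ is a proper parabolic and $\pi$ is supercuspidal, so $\pi_{N_\alpha} = 0$, contradicting the non-vanishing of $\Hom_G(\pi,\Ind_U^G\chi)$. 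The only slightly delicate point is the root-system bookkeeping that forces $\chi|_{N_\alpha} = 1$; the rest is standard Frobenius reciprocity combined with the defining property of supercuspidal representations.
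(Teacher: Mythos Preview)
Your argument is correct and follows essentially the same route as the paper: assume degeneracy, pick a simple root $\alpha$ on which $\chi$ is trivial, observe that $\chi$ is then trivial on the unipotent radical $N_\alpha$ of the associated maximal parabolic, and conclude via Frobenius reciprocity that $\pi_{N_\alpha}\ne 0$, contradicting supercuspidality. The one point you flag as ``slightly delicate'' --- that $U_\beta(F)\subset [U(F),U(F)]$ for every non-simple positive root $\beta$ --- is exactly where the paper invokes an external result (\cite{BH2}, Theorem~4.1, using that $\char F\ne 2$); this is not automatic at the level of $F$-points and deserves the citation.
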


\begin{proof}
Assume $\chi$ is degenerate and use the definition of nondegeneracy in 
\cite{BH3}, 1.2, as well as the corresponding notation: there is a simple root $\alpha$ 
such that the restriction of $\chi$ to $U_{(\alpha)}$ is trivial. 
The character $\chi$ is then trivial on the subgroup $\left< U_{(\alpha)}, U_{\text der}\right>$ 
of $U$. 

We claim that this subgroup contains the unipotent radical $N$ of a proper parabolic subgroup 
$P$ of $G$. The assumption $\Hom_G(\pi,\Ind_U^G\chi)\ne 0$  provides us, by Frobenius reciprocity,  
with a non-zero linear form $\lambda$ 
on the space $V$ of $\pi$ which satisfies 
$\lambda \circ \pi(u) = \chi (u) \lambda$ for any $u \in U$. 
Since $\lambda$ is in particular $N$-invariant, we get that the space $V_N$ of $N$-coinvariants is non-zero, 
which contradicts cuspidality. 

We now prove the claim. Let $\Delta$ be the set of simple roots as in \cite{BH3}. 
From \cite{Bo}, 21.11, the unipotent radical of the  standard $F$-parabolic subgroup 
$\mathbf P_I$ of $\mathbf G$ attached to the subset $I = \Delta - \{\alpha\}$ is
$\mathbf U_{\Psi(I)}$ where $\Psi(I)$ is the set of positive roots that can be written 
$\alpha + \beta$ with $\beta$ either $0$ or a positive root. Hence the elements in $\Psi(I)$ 
other than $\alpha$ are positive roots $\gamma$ of length at least $2$, and  $\mathbf U_{\Psi(I)}$ 
is directly spanned by $\mathbf U_{(\alpha)}$ and the $\mathbf U_{(\gamma)}$ for non-divisible 
such roots $\gamma$. 
From \cite{BH2}, Theorem 4.1, for any such $\gamma$ we have 
$\mathbf U_{(\gamma)}(F) \subset   U_{\text der}$ (recall the characteristic of $F$ is not $2$), 
hence $N =  \mathbf U_{\Psi(I)}(F)$ is contained in $\left< U_{(\alpha)}, U_{\text der}\right>$ 
as asserted.
\end{proof}

Now let $\pi$ be an irreducible supercuspidal representation of
$G$. We suppose, as is the case for all known supercuspidal
representations, that $\pi$ is irreducibly compact-induced from some open
compact mod centre subgroup of $G$. Then (the proof of) \cite{BH}
1.6 Proposition immediately gives us the following:

\begin{Proposition}\label{BHprop}
Let $K$ be an open compact mod centre subgroup of
$G$ and $\rho$ an irreducible representation of $K$ such that
$\pi=\cInd_K^G\rho$ is an irreducible, so supercuspidal,
representation of $G$. Then $\pi$ is generic if and only if there
exist a maximal connected unipotent subgroup $\mathbf U$ of $\mathbf G$ and a  
character $\chi$ of $U= \mathbf U(F)$ such that $\rho|K\cap U$ contains $\chi|K\cap
U$. Moreover, if this is the case then $
\Hom_G(\pi,\Ind_U^G\chi)\ne 0 
$ so $\chi$ is nondegenerate,  and 
the character $\chi|K\cap U$
occurs in $\rho|K\cap U$ with multiplicity $1$.
\end{Proposition}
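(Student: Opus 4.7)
The plan is to deduce the statement directly from \cite{BH} 1.6 Proposition (as already hinted above) combined with Lemma~\ref{nondegenerate} and the Rodier multiplicity bound. Since $K$ is open in $G$, Frobenius reciprocity for compact induction gives
$$
\Hom_G(\cInd_K^G\rho,\Ind_U^G\chi)\ \cong\ \Hom_K(\rho,(\Ind_U^G\chi)|_K)
$$
for any maximal connected unipotent subgroup $\mathbf U$ and any smooth character $\chi$ of $U=\mathbf U(F)$. As $U$ is closed and $K$ is open, Mackey's formula identifies $(\Ind_U^G\chi)|_K$, as a $K$-module, with the direct product $\prod_{g\in U\backslash G/K}\Ind_{K\cap g^{-1}Ug}^K\chi^g$, where $\chi^g(u)=\chi(gug^{-1})$. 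Applying Frobenius once more to each factor---and noting that $K\cap g^{-1}Ug$ is compact, since $Z(G)\cap U=\{1\}$ and $K$ is compact modulo centre---one rewrites each contribution as the multiplicity space $\Hom_{K\cap g^{-1}Ug}(\rho,\chi^g)$.

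For the equivalence, suppose first that $\pi$ is generic, so $\Hom_G(\pi,\Ind_U^G\chi)\ne 0$ for some maximal unipotent $U$ and some nondegenerate $\chi$. Then some double-coset representative $g$ must contribute non-trivially; replacing $(\mathbf U,\chi)$ by its $G$-conjugate $(g^{-1}\mathbf U g,\chi^g)$, still a maximal connected unipotent subgroup together with a character, one obtains exactly the stated containment criterion. Conversely, if $\rho|K\cap U$ contains $\chi|K\cap U$ for some such pair, the $g=1$ summand of the Mackey decomposition is non-zero, so $\Hom_G(\pi,\Ind_U^G\chi)\ne 0$; Lemma~\ref{nondegenerate}, applicable because $\pi$ is supercuspidal, then forces $\chi$ to be nondegenerate, and $\pi$ is generic.

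For the multiplicity-one assertion, Rodier's Theorem gives $\dim\Hom_G(\pi,\Ind_U^G\chi)\le 1$ for nondegenerate $\chi$; via the Frobenius--Mackey chain, this dimension equals the sum over $g\in U\backslash G/K$ of the local multiplicities $\dim\Hom_{K\cap g^{-1}Ug}(\rho,\chi^g)$. In particular, the $g=1$ summand is at most $1$, hence exactly $1$ by hypothesis (and all other summands vanish). The main technical obstacle is justifying the Mackey decomposition and its compatibility with $\Hom_K(\rho,-)$ despite the non-compactness of $U$, but the openness of $K$ together with the finite-dimensionality of the irreducible $\rho$ of the compact-mod-centre group $K$ makes the argument essentially formal.
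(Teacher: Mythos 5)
Your argument is correct and is essentially the proof the paper invokes: the paper simply cites (the proof of) \cite{BH} 1.6 Proposition, which is exactly this Frobenius reciprocity for $\cInd$ from the open subgroup $K$, Mackey decomposition of $(\Ind_U^G\chi)|_K$ over the open-closed double cosets $U\backslash G/K$, and a second Frobenius reciprocity, with Rodier's multiplicity-one bound and Lemma~\ref{nondegenerate} supplying the nondegeneracy and multiplicity-one conclusions. The only cosmetic caveat is that $(\Ind_U^G\chi)|_K$ is the smooth part of the direct product rather than the full product, which, as you note, is harmless when computing $\Hom_K(\rho,-)$ for the smooth finite-dimensional $\rho$.
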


\subsection{A criterion for non-genericity}\label{S1criterion}

Now we look more closely at the situation for symplectic groups so we
return to our usual notation: $G=\Sp_{2N}(F)$. In particular, using
Proposition \ref{BHprop} and a decomposition of $G$, we will obtain a
criterion to determine when an irreducible representation of $G$ is
{\it not\/} generic.

Let $T$ denote the standard (diagonal) maximal split torus of $G$,
let $U$ be the subgroup of all upper triangular unipotent matrices in
$G$, and put ${\cal B}=TU$, the Borel subgroup of all upper triangular
matrices in $G$. Let $\Phi=\Phi(G,T)$ be the root system and, for
$\g\in\Phi$, let $U_\g$ denote the corresponding root subgroup. Let $W$
denote the Weyl group $N_G(T)/T$; by abuse of notation, we will also
use $W$ for a set of representatives in the compact maximal subgroup
$K_0=\Sp_{2N}(\oF)$ of $G$.

We write $K_1$ for the pro-unipotent radical of $K_0$, so that
$K_0/K_1\simeq \Sp_{2N}(k_F)$. We note that ${\cal B}\cap K_0/{\cal B}\cap K_1$ is
the standard Borel subgroup (of upper triangular matrices) of
$K_0/K_1$, that $T\cap K_0/T\cap K_1$ is the diagonal torus, and that
$W$ is the Weyl group. 

Let $I_1$ denote the inverse image of the maximal unipotent subgroup
$U\cap K_0/U\cap K_1$ of $K_0/K_1$, that is, the pro-unipotent radical
of the standard Iwahori subgroup $I$ consisting of matrices which are
upper triangular modulo $\pF$. Then the Bruhat decomposition for
$K_0/K_1$ gives
$$
K_0\,/\,K_1 = ({\cal B}\cap K_0) W I_1\,/\,K_1.
$$
Since $K_1\subset I_1$, we obtain $K_0=({\cal B}\cap K_0)WI_1$. Finally, using
the Iwasawa decomposition $G={\cal B}K_0$ (since $K_0$ is a good maximal
compact subgroup of $G$), we obtain
\begin{equation} \label{G=BWI}
G = {\cal B}WI_1.
\end{equation}

Now we can use this decomposition to translate Proposition
\ref{BHprop} into a sufficient condition for non-genericity of
compactly-induced supercuspidal representations of $G$.

\begin{Proposition} \label{criterion}
Let $J$ be a compact open subgroup of $G$ and $\lambda$ an irreducible
representation of $J$ such that $\pi=\cInd_J^G\l$ is an irreducible
supercuspidal representation of $G$. Let $U$ be the subgroup of all
upper triangular unipotent matrices in $G$ and we also use the other
notations from above. Then
$\pi$ is generic if and only if there exist $w\in W$, $p\in I_1$
and a   character $\chi$ of $U$ such that\/ ${}^p\l$
contains the character $\chi^w$ of\/ ${}^pJ\cap U^w$.
In particular, 
\begin{quote}
if $\pi$ is generic then there are $p\in I_1$ and a
\emph{long} root $\g\in\Phi$ such that\/ ${}^p\l$ contains the trivial
character of\/ ${}^pJ\cap U_\g$.
\end{quote}
\end{Proposition}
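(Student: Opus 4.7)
The plan is to combine Proposition~\ref{BHprop} with the decomposition $G=\mathcal{B}WI_1$ from~(\ref{G=BWI}) in order to parametrise maximal unipotent subgroups of $G$ by $W\times I_1$. Suppose $\pi$ is generic. By Proposition~\ref{BHprop} there exist a maximal connected unipotent subgroup $U'$ of $G$ and a character $\chi'$ of $U'$ such that $\lambda|_{J\cap U'}$ contains $\chi'|_{J\cap U'}$. All maximal unipotents are $G$-conjugate to $U$, so $U'=U^h$ for some $h\in G$, and~(\ref{G=BWI}) gives $h=bwp$ with $b\in\mathcal{B}$, $w\in W$, $p\in I_1$. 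Since $U$ is normal in $\mathcal{B}$ we have $U^b=U$, hence $U'=(U^w)^p=p^{-1}U^w p$. Conjugating the containment through by $p$ shows that ${}^p\lambda$ contains ${}^p\chi'$ on ${}^pJ\cap U^w$, and since $u\mapsto w^{-1}uw$ is a group isomorphism $U\simeq U^w$, the character ${}^p\chi'$ can be written uniquely as $\chi^w$ for some character $\chi$ of $U$. This produces the desired $w$, $p$, $\chi$. For the converse one simply reverses the argument: starting from $w$, $p$, $\chi$ as in the statement, set $U'=p^{-1}U^w p$ and $\chi'={}^{p^{-1}}\chi^w$, and apply Proposition~\ref{BHprop}.

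For the long-root consequence, let $w$, $p$, $\chi$ be as produced above. Any character of $U$ factors through $U/[U,U]$, and the Chevalley commutator formula ensures $U_\alpha\subset[U,U]$ whenever $\alpha\in\Phi^+$ is non-simple. For $\Sp_{2N}$ with $N\ge 2$, the long root $2e_1$ is positive and non-simple, so $\chi|_{U_{2e_1}}$ is trivial. Setting $\gamma=w^{-1}(2e_1)$, which is again a long root since $W$ preserves root lengths, we have $U_\gamma=w^{-1}U_{2e_1}w\subset U^w$ and $\chi^w|_{U_\gamma}=1$. Restricting the containment ${}^p\lambda|_{{}^pJ\cap U^w}\supset \chi^w|_{{}^pJ\cap U^w}$ to the subgroup ${}^pJ\cap U_\gamma\subset{}^pJ\cap U^w$ then shows that ${}^p\lambda$ contains the trivial character of ${}^pJ\cap U_\gamma$, as required.

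The main obstacle is essentially bookkeeping: one must carefully track conjugation conventions in the passage from the abstract criterion of Proposition~\ref{BHprop} to the concrete form stated here, and in particular use the normality of $U$ in $\mathcal{B}$ to absorb the $\mathcal{B}$-component of the Bruhat--Iwasawa decomposition into the character $\chi$. Beyond that, the only essential input is the elementary observation that in $\Sp_{2N}$ ($N\ge 2$) at least one positive long root, namely $2e_1$, lies in $[U,U]$ and hence in the kernel of every character of $U$, which is what forces triviality of $\chi^w$ on the corresponding (long) root subgroup $U_\gamma$.
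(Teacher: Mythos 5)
Your proof is correct and follows essentially the same route as the paper: Proposition~\ref{BHprop} combined with the decomposition $G=\mathcal{B}WI_1$, absorbing the $\mathcal{B}$-factor into the character and conjugating by $p$, with the long-root consequence coming from a long root subgroup lying in the derived group of $U$ (hence of $U^w$ after transport by $w$). Your explicit identification of $U_{2e_1}\subset U_{\mathrm{der}}$ via the commutator formula (valid since $\char F\ne 2$) merely spells out what the paper states as ``$U^w_{der}$ contains $U_\gamma$ for some long root $\gamma$.''
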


We remark that, in our symplectic basis, the long roots correspond to
the entries on the anti-diagonal.

\begin{proof} Since all maximal unipotent subgroups of $G$ are
conjugate to $U$, Proposition~\ref{BHprop} implies that $\pi$ is
generic if and only if there exist $g\in G$ and a  
character $\chi$ of $U$ such that $\l$ contains the character $\chi^g$
of $J\cap U^g$. Now we use the decomposition \eqref{G=BWI} to write
$g=bwp$, with $b\in B$, $w\in W$ and $p\in I_1$. Since $U^b=U$ and
$\chi^b$ is another character of $U$, we can absorb the
$b$ and the result follows on conjugating by $p$.

The final assertion follows since the derived subgroup $U^w_{der}$
contains $U_\g$ for some long root $\g$.
\end{proof}

\setcounter{section}{1}

\Section{The  supercuspidal representations of
${\bf Sp}_{\bs 4}\bs(\bs F\bs)$}\label{S3}

What we seek is a complete list of which supercuspidal representations 
of $\Sp_4(F)$ are generic, which are not. 
So we start with a description of positive level supercuspidal 
representations of $\Sp_4(F)$ -- 
level zero supercuspidal representations are obtained by inducing from
 a maximal parahoric subgroup $\mathcal P$ 
the inflation of a cuspidal representation of the (finite) quotient 
of $\mathcal P$ by its pro-$p$-radical. 
We are then in a position to state our main theorem, identifying 
non-generic representations from our list. We end the section with a proof 
of the theorem for level zero representations. 
The proof for positive level occupies the remaining sections. 

\subsection{The positive level supercuspidal representations }\label{construction}

In this section we describe the construction of the positive level
supercuspidal representations of $\Sp_4(F)$. We refer to \cite{S5} for
more details and for proofs of the results stated here.

The construction begins with a skew semisimple stratum $[\L,n,0,\b]$
in $A$ such that $\mathfrak a (\L) \cap B$ is a {\it maximal} self-dual order normalized by $E^\times$ in $B$. There are essentially four cases here. In the first two, the
stratum is actually simple:
\begin{enumerate}
\def\labelenumi{{\rm(\Roman{enumi})}}
\def\theenumi{\Roman{enumi}}
\item \label{case4} ``maximal case'': $[\L,n,0,\b]$ is a skew simple
stratum and $E=F[\b]$ is an extension of $F$ of degree $4$.
\item \label{2then0} ``2 then 0 case'': $[\L,n,0,\b]$ is a skew simple
stratum, $E=F[\b]$ is an extension of $F$ of degree $2$, and
$\fa_0(\L)$ is maximal amongst (self-dual) $\oF$-orders in $A$
normalized by $\Ex$.
\end{enumerate}
Otherwise, we have a splitting $V=V^1\perp V^2$ of $[\L,n,0,\b]$ into
two $2$-dimensional $F$-vector spaces, and we write: $\L^i$ for the
lattice sequence in $V^i$ given by $\L^i(k)=\L(k)\cap V^i$, for
$k\in\BZ$; $\b_i=\boI^i\b\boI^i$, where $\boI^i$ is the projection
onto $V^i$ with kernel $V^{3-i}$; if $\b_i\ne 0$ then
$n_i=-\nu_{\L^i}(\b_i)$, otherwise $n_i=0$. Then $n=\max\{n_1,n_2\}$.
\begin{enumerate}
\def\labelenumi{{\rm(\Roman{enumi})}}
\def\theenumi{\Roman{enumi}}
\setcounter{enumi}{2}
\item \label{2plus2} ``2+2 case'': for $i=1,2$, $[\L^i,n_i,0,\b_i]$ is a
skew simple stratum and $E_i=F[\b_i]$ is an extension of $F$ of degree $2$.
\item \label{2plus0} ``2+0 case'': $[\L^1,n_1,0,\b_1]$ is a
skew simple stratum and $E_1=F[\b_1]$ is an extension of $F$ of degree
$2$; $\b_2=0$, so that in $V^2$ we have the null stratum
$[\L^2,0,0,0]$, and $\fa_0(\L^2)$ is maximal amongst (self-dual)
$\oF$-orders in $A^2=\End_F(V^2)$.
\end{enumerate}
We often think of \eqref{2plus0} as a degenerate case of
\eqref{2plus2} by thinking of a null stratum as a degenerate simple
stratum.

In each case, we have the subgroups $\oH^1=H^1(\b,\L)$,
$\oJ^1=J^1(\b,\L)$ and $\oJ=J(\b,\L)$ of $\oG$. We write
$H^1=\oH^1\cap G$, and similarly for the other groups. There is a
family $\CC(\b,\L)$ of rather special characters of $H^1$, called
{\it semisimple characters\/}; one of their properties is the fact 
that their restriction to $P_i(\L)$ for suitable $i$ is 
equal to $\psi_\beta$.    For each $\th\in\CC(\b,\L)$, there is a
unique irreducible representation $\eta$ of $J^1$ containing $\th$.
In each case, there is a ``suitable'' extension $\k$ of $\eta$ to a
representation of $J$, which we call a {\it $\beta$-extension\/} --
see below for more details of this step.

The extensions $E$, $E_i$ in each case come equipped with a
non-trivial galois involution, which we write $\ov{\phantom{a}}$ as
usual. We use the same notation for the induced involution on the
residue fields $k_E$, $k_i$; note that this involution may be
trivial. Then the quotient $J/J^1$ has one of the following forms:
\begin{enumerate}
\def\labelenumi{{\rm(\Roman{enumi})}}
\def\theenumi{\Roman{enumi}}
\item $k_E^1=\{x\in k_E:x\ov x=1\}$;
\item $U(1,1)(k_E/k_F)$ or $k_E^1\times k_E^1$ if $E/F$ is unramified; 
$SL_2(k_F)$ or $O_2(k_F)$ if $E/F$ is ramified;
\item $k_1^1\times k_2^1$;
\item $k_1^1\times SL_2(k_F)$.
\end{enumerate}
Let $\s$ be the inflation to $J$ of an irreducible cuspidal
representation of $J/J^1$. (Note that, in the case of $O_2(k_F)$ in
\eqref{2then0}, this just means any irreducible representation of the
(anisotropic) dihedral group $O_2(k_F)$.)

Now we put $\l=\k\otimes\s$ and $\pi=\cInd_J^G\l$ is an irreducible
supercuspidal representation of $G$. All irreducible supercuspidal
representations of $G$ can be constructed in this way (though we
remark that often we cannot, as yet, tell when two such
representations are equivalent).

\medskip

Finally in this subsection, we recall briefly some properties of the
$\b$-extensions which we will need, especially in the cases
\eqref{2then0} and \eqref{2plus0} where their construction is somewhat
more involved. Indeed, in cases \eqref{case4} and \eqref{2plus2},
$J/J^1$ has no unipotent elements so there is never any problem here.

We define another skew semisimple stratum $[\L_m,n_m,0,\b]$ as follows:
\begin{itemize}
\item in cases \eqref{case4} and \eqref{2plus2}, we have $\L_m=\L$, $n_m=n$;
\item in case \eqref{2then0}, $\L_m$ is a self-dual $\oE$-lattice
sequence in $V$ with $\fa_0(\L_m)\subset\fa_0(\L)$ minimal amongst
(self-dual) $\oF$-orders normalized by $\Ex$, and $n_m=-\nu_{\L_m}(\b)$;
\item in case \eqref{2plus0}, we take $\L_m^2$ a self-dual
$\oF$-lattice sequence in $V$ with $\fa_0(\L_m^2)\subset\fa_0(\L^2)$
minimal amongst (self-dual) $\oF$-orders, $\L_m=\L^1\perp\L_m^2$, and
$n_m=-\nu_{\L_m}(\b)$.
\end{itemize}
In each case, we have the subgroups $\oH^1_m=H^1(\b,\L_m)$,
$\oJ^1_m=J^1(\b,\L_m)$ of $\oG$, and we put $H_m^1=\oH_m^1\cap G$
etc. Let $\th_m\in\CC(\b,\L_m)$ be the {\it transfer\/} of $\th$, that
is $\th_m=\t_{\L,\L_m,\b}\th$ in the notation of \cite{S4} \S3.6, and
let $\eta_m$ be the unique irreducible representation of $J_m^1$
containing $\th_m$. We form the group 
$$
\tJ^1=(P_1(\L_m)\cap B) J^1.
$$
Then (see \cite{S5}) there is a unique irreducible representation
$\tilde\eta$ of $\tJ^1$ which extends $\eta$ and such that 
$\tilde\eta$ and $\eta_m$ induce equivalent irreducible representations
of $P_1(\L_m)$. Moreover, if $I_g(\tilde\eta)$ denotes the
$g$-intertwining space of $\tilde\eta$, we have
$$
\dim I_g(\tilde\eta)=\begin{cases} 1 &\hbox{if }g\in \tJ^1 (B\cap
G)\tJ^1; \\  0&\hbox{otherwise.} \end{cases}
$$
A $\beta$-extension of $\eta$ is an irreducible representation $\k$ of
$J$ such that $\k|_{\tJ^1}=\tilde\eta$.

\subsection{The main theorem   }

\begin{Theorem}\label{nongenericscs}  
The non-generic supercuspidal representations of $ \ \Sp_4(F)$ are the following. 
\begin{enumerate} \item The positive level supercuspidal representations  
attached to a skew semisimple stratum \break 
$[\L,n,0,\b]$ as above and such  that:  
\begin{itemize}
\item either $[\L,n,0,\b]$ is a sum of non-zero simple strata (cases (\ref{case4}), (\ref{2then0}) and (\ref{2plus2})) 
and there is no maximal unipotent subgroup of $G$ on which $\psi_\b$ is a character;  

\item or  $[\L,n,0,\b]$ is the sum of a non-zero simple stratum and a null stratum in dimension 2 (case (\ref{2plus0})). 
\end{itemize}

\item \label{level0} The  level zero supercuspidal representations  
induced from the inflation 
to a maximal parahoric subgroup $\mathcal P$ 
of a cuspidal representation $\sigma$ of $ \ \mathcal P/ \mathcal P^1$, 
where $\mathcal P^1$ is the pro-$p$-radical of 
$\mathcal P $, satisfying one of the following: 
\begin{enumerate} 

\item \label{level0nonconnected} $\mathcal P $ is attached to a non-connected 
subset of the extended Dynkin diagram of $G$, that is, 
$\mathcal P/ \mathcal P^1$ is isomorphic to $\Sp(2,k_F) \times \Sp(2,k_F)$. 

\item \label{level0connected} $\mathcal P $ is isomorphic to $ \ \Sp_4(\oF)$  
and $\sigma $ is  a non-regular 
cuspidal representation of $ \ \Sp_4(k_F)$. 

\end{enumerate}
\end{enumerate}
\end{Theorem}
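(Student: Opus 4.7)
My plan is to treat the level-zero and positive-level cases of Theorem~\ref{nongenericscs} separately, since each relies on quite different constructions; in both the fundamental tool will be Proposition~\ref{BHprop}, which reduces genericity of $\cInd_K^G\rho$ to the containment of a character of some $K\cap U$ in the restriction of $\rho$, together with its refinement Proposition~\ref{criterion}, which uses the decomposition $G=\mathcal{B}WI_1$ to reduce the test to a finite list of possibilities for $U$.

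For the level-zero part (statement 2), every level-zero supercuspidal of $G=\Sp_4(F)$ is compactly induced from the inflation $\tilde\sigma$ to a maximal parahoric $\mathcal{P}$ of a cuspidal representation $\sigma$ of $\mathcal{P}/\mathcal{P}^1$. Because $\mathcal{P}^1\subset I_1$ acts trivially after conjugation, the genericity question descends to a finite-field question on $\mathcal{P}/\mathcal{P}^1$: does $\sigma$ contain the restriction of a non-degenerate character of $U$? In case~2(\ref{level0nonconnected}), where $\mathcal{P}/\mathcal{P}^1\cong\Sp_2(k_F)\times\Sp_2(k_F)$, the image of $\mathcal{P}\cap U^w$ in the Levi quotient only ever sees the two long-root subgroups of $G$, whereas any non-degenerate character of $U$ must be non-trivial on a short root, so no choice of $w$ or of cuspidal $\sigma$ can produce a generic induced representation. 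In case~2(\ref{level0connected}), with $\mathcal{P}/\mathcal{P}^1\cong\Sp_4(k_F)$ the correspondence is compatible and genericity of $\pi$ is equivalent to genericity of the finite-group cuspidal $\sigma$; the statement then reduces to the classical fact, essentially due to Srinivasan, that the unique non-generic cuspidal representation of $\Sp_4(k_F)$ is $\theta_{10}$, characterised precisely as the non-regular one.

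For the positive-level part (statement 1), $\pi=\cInd_J^G(\kappa\otimes\sigma)$, and again Proposition~\ref{BHprop} reduces genericity to whether $\kappa\otimes\sigma|_{J\cap U}$ contains a restriction of a non-degenerate character of some maximal unipotent $U$. The pivot is the function $\psi_\beta$: Proposition~\ref{Uderprop} (proved in \S\ref{S4}) classifies exactly when $\psi_\beta$ defines a character of a maximal unipotent subgroup. When it does not -- cases (\ref{case4}), (\ref{2then0}) and (\ref{2plus2}) under the stated binary/anisotropy conditions -- I invoke Proposition~\ref{criterion}: using that a semisimple character restricts to $\psi_\beta$ on $P_i(\Lambda)$ for suitable $i$ and that $\kappa$ inherits this transformation law on a pro-$p$ subgroup containing every $I_1$-conjugate of each long-root subgroup, no long root subgroup can support a trivial restriction, forcing non-genericity. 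When $\psi_\beta$ does give a character of some $U$, the fundamental step is Theorem~\ref{kappainduced}: the $\beta$-extension $\kappa$ already contains $\psi_\beta|_{J\cap U}$. Then I split by the structure of $J/J^1$: in (\ref{case4}) and (\ref{2plus2}), $\sigma$ is a character of the anisotropic group $k_E^1$ or $k_1^1\times k_2^1$ so no further obstruction arises and genericity follows; in (\ref{2then0}), one verifies that the inflation of a cuspidal representation of $U(1,1)(k_E/k_F)$, $k_E^1\times k_E^1$, $\SL_2(k_F)$ or $O_2(k_F)$ contains the restriction of some character of $U$ extending $\psi_\beta$; in (\ref{2plus0}), $\psi_\beta$ is necessarily degenerate on $U$ and the anisotropic $k_1^1$-factor of $\sigma$ blocks any lift to a non-degenerate character, yielding non-genericity.

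The main obstacle will be Theorem~\ref{kappainduced}, which asserts a delicate containment of $\psi_\beta$ in the $\beta$-extension $\kappa$; its proof will need a careful Heisenberg-type analysis of $\eta$ and its extension $\tilde\eta$ to $\tilde J^1=(P_1(\Lambda_m)\cap B)J^1$, together with the transfer construction $\tau_{\Lambda,\Lambda_m,\beta}$ in order to control $\kappa$ on the maximal sequence $\Lambda_m$ in cases (\ref{2then0}) and (\ref{2plus0}). The subsequent case-by-case verification for (\ref{2then0}), where $J/J^1$ takes four different shapes, and for (\ref{2plus0}), where the interaction of the anisotropic factor $k_1^1$ with the $\SL_2(k_F)$ factor must be reconciled with the degenerate $\psi_\beta$, will require an explicit choice of symplectic basis compatible with the orthogonal decomposition $V=V^1\perp V^2$ and a patient tracking of which root subgroups of $G$ meet $J$ in which way.
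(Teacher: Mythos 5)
Your overall skeleton (Proposition~\ref{BHprop} plus the decomposition criterion of Proposition~\ref{criterion}, then Proposition~\ref{Uderprop} and Theorem~\ref{kappainduced} for positive level) is the paper's, but two of your arguments rest on obstructions that are not the right ones and would not close the proof. First, in the level-zero case 2(\ref{level0nonconnected}) you argue that the image of $\mathcal P\cap U^w$ in $\Sp_2(k_F)\times\Sp_2(k_F)$ only involves long-root subgroups while a nondegenerate character of $U$ must be nontrivial on a short root. That mismatch is no obstruction: the criterion only asks that $\sigma$ contain the \emph{restriction} $\chi^w|_{\mathcal P\cap U^w}$, and nothing prevents that restriction from being nondegenerate on the finite group merely because the short-root data is forgotten. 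The actual point is a long-root count: any character $\chi$ of $U$ is trivial on $U_{\mathrm{der}}$, which contains the non-simple positive long root subgroup, and $w$ carries the two positive long roots of $U$ onto the two long-root entries of $U^w$; hence $\chi^w$ is trivial on the unipotent radical of at least one $\Sp_2(k_F)$ factor, which contradicts cuspidality of $\sigma=\sigma_1\otimes\sigma_2$ (Lemma~\ref{nondegenerate} applied factorwise). Without this count your argument does not rule out genericity.

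Second, your treatment of case (\ref{2plus0}) is both misattributed and incomplete. The mechanism is not that "the anisotropic $k_1^1$-factor blocks any lift": the relevant factor is the cuspidal $\SL_2(k_F)$-component, which forces $\lambda|_{J\cap U\cap B^\times}$ to be a sum of \emph{non-trivial} characters (Proposition~\ref{2plus0nongeneric}, resting on Theorem~\ref{kappainduced} and the fact that $\kappa$ is trivial on $J\cap U\cap B^\times$), whereas every character of $U$ is trivial there because that long-root subgroup lies in $U_{\mathrm{der}}$. More seriously, even the corrected statement only excludes characters of the one distinguished $U$; Proposition~\ref{BHprop} quantifies over \emph{all} maximal unipotent subgroups, so one must exclude the trivial character on every $I_1$-conjugate of every long-root subgroup. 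The paper explicitly warns that the single-$U$ reasoning "is an heuristic explanation\dots unfortunately not a proof"; the missing work is the analysis of \S\ref{S6.degenerate} (Lemmas~\ref{Lemma6.6.4}--\ref{Lemma6.6.8} and Proposition~\ref{Proposition6.6.9}), which shows every conjugate either meets a subgroup $L\subseteq H^1$, on which $\lambda$ is a multiple of $\psi_\beta$, in a set where $\psi_\beta$ is non-trivial, or can be moved back to $U_{\pm s}\cap J$ itself. A milder version of the same caveat applies to your non-existence cases (\ref{case4}), (\ref{2then0}), (\ref{2plus2}): no compact subgroup can "contain every $I_1$-conjugate of each long-root subgroup" (root subgroups are not compact); what must be proved, and is exactly where the anisotropy hypotheses of Proposition~\ref{Uderprop} enter through valuation estimates for $h(ge_{-k},\beta ge_{-k})$ (\S\ref{S6.maxsimple}, \S\ref{maxss}), is that $\psi_\beta$ is non-trivial on ${}^{g}U_\gamma\cap L$ for all relevant $g$ and all long roots $\gamma$.
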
 

 For positive level representations the theorem will follow 
 from Proposition \ref{Uderprop}, which establishes the conditions on $\beta$ for there 
 to exist a maximal unipotent subgroup of $G$ on which $\psi_\b$ is a character, and from 
 Theorem \ref{genericscs}
 and section \ref{S6}. 
The proof for level zero representations is given below, 
with a more detailed list.

\subsection{The generic level zero representations of ${\bf Sp}_{\bs 2 \bs N}\bs(\bs F\bs)$ }

Note that for finite reductive groups, the notion equivalent to genericity is called regularity: 
 a representation of $   \Sp_{2N}(k_F)$ is called {\it regular} if it 
contains a nondegenerate character of a maximal unipotent subgroup. 
Part (\ref{level0}) of the above theorem, i.e. the level zero case, 
actually holds for $   \Sp_{2N}(F)$, as a consequence of Propositions  
\ref{BHprop} and \ref{criterion}.

\begin{Proposition} 
  Let $\mathcal P$ be  a maximal parahoric subgroup of $ \ \Sp_{2N}(F)$
   with pro-$p$-radical $\mathcal P^1$ 
and let $\sigma$  be a cuspidal representation of $ \ \mathcal P/ \mathcal P^1$. 
The representation $\pi$ of $ \ \Sp_{2N}(F)$  induced from the inflation of 
$\sigma$ to $\mathcal P$ is irreducible supercuspidal. 
It is generic if and only if the quotient $ \ \mathcal P/ \mathcal P^1$ 
is isomorphic to $ \ \Sp_{2N}(k_F)$ and $\sigma$ identifies to a regular 
cuspidal representation of $ \ \Sp_{2N}(k_F)$. 
\end{Proposition}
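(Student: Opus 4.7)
The plan is to reduce the genericity question to the level of the finite reductive quotient $\bar{\mathcal P}=\mathcal P/\mathcal P^1$ via Propositions~\ref{BHprop} and~\ref{criterion}. Irreducibility and supercuspidality of $\pi=\cInd_{\mathcal P}^G\sigma$ follow in the standard way from the cuspidality of $\sigma$ in $\bar{\mathcal P}$: the intertwining of $\sigma$ computed via the affine Bruhat decomposition collapses to $\mathcal P$, where it is scalar.

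For sufficiency, suppose $\bar{\mathcal P}\cong\Sp_{2N}(k_F)$ and $\sigma$ is regular, containing a nondegenerate character $\bar\chi$ of the standard maximal unipotent $\bar U$ of $\bar{\mathcal P}$. With $U$ the standard upper triangular unipotent of $\Sp_{2N}(F)$, the reduction $U\cap\mathcal P\twoheadrightarrow\bar U$ is surjective and respects the simple-root decomposition: each $U_{\alpha_i}\cap\mathcal P\cong\oF$ surjects onto $\bar U_{\alpha_i}\cong k_F$. Lifting the values of $\bar\chi$ on simple root subgroups to units in $\oF$ produces a nondegenerate character $\chi$ of $U$ whose restriction to $U\cap\mathcal P$ is the inflation of $\bar\chi$; Proposition~\ref{BHprop} delivers the genericity of $\pi$.

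For necessity, assume $\pi$ generic. By Proposition~\ref{BHprop} there is a character $\chi$ of some maximal unipotent $U$ with $\chi|_{\mathcal P\cap U}$ contained in $\sigma|_{\mathcal P\cap U}$; since $\sigma$ is trivial on $\mathcal P^1$, $\chi$ must be trivial on $U\cap\mathcal P^1$ and descends to a character $\bar\chi$ of the image of $U\cap\mathcal P$ inside $\bar{\mathcal P}$. Using Proposition~\ref{criterion} one reduces (after conjugation, absorbing the $W$- and $I_1$-freedom) to $\mathcal P$ standard (containing the Iwahori) and $U$ the standard upper triangular unipotent. In the hyperspecial case the image is the full standard $\bar U\subset\bar{\mathcal P}\cong\Sp_{2N}(k_F)$; were $\bar\chi$ degenerate on some simple root subgroup, the finite-group analogue of Lemma~\ref{nondegenerate}---obtained by replacing ``cuspidality of $\pi$'' with ``cuspidality of $\sigma$'' throughout its proof---would force $\sigma$ to have a nonzero Jacquet module with respect to a proper parabolic, a contradiction. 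Hence $\bar\chi$ is nondegenerate and $\sigma$ is regular.

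The heart of the matter is the non-hyperspecial case $\bar{\mathcal P}\cong\Sp_{2k_1}(k_F)\times\Sp_{2k_2}(k_F)$ with $k_1,k_2\geq 1$, where one must derive a contradiction from the assumption that $\pi$ is generic. The key structural observation is that, identifying $\bar{\mathcal P}$ with the pair of symplectic groups on the two graded pieces of the self-dual lattice chain stabilized by $\mathcal P$, the long simple root $2e_{k_i}$ of one of the factors is a positive but \emph{non}-simple long root of $\Sp_{2N}$, since the only long simple root of $\Sp_{2N}$ is $2e_N$. Consequently $U_{2e_{k_i}}\subset U_{\mathrm{der}}$, whereas $U_{2e_{k_i}}\cap\mathcal P$ maps onto the long simple root subgroup of that factor of $\bar{\mathcal P}$. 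Since $\chi$ annihilates $U_{\mathrm{der}}$, the descended $\bar\chi$ is trivial on this long simple root subgroup; the finite-group version of the argument of Lemma~\ref{nondegenerate} then places the trivial character of the unipotent radical of a proper parabolic of $\Sp_{2k_i}(k_F)$ inside the corresponding cuspidal factor $\sigma_i$ of $\sigma=\sigma_1\otimes\sigma_2$, contradicting cuspidality. The main obstacle is organizing the lattice-chain bookkeeping and checking the root-subgroup mapping uniformly across all maximal unipotents permitted by Proposition~\ref{criterion}; however, the asymmetry between the unique long simple root of $\Sp_{2N}$ and the two long simple roots (one per factor) of $\bar{\mathcal P}$ is manifestly $W$-invariant, so the contradiction persists in every case.
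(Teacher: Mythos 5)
Your proposal is correct and follows essentially the same route as the paper: reduce to the finite quotient via Propositions~\ref{BHprop} and~\ref{criterion}, lift a nondegenerate character for sufficiency, use the finite analogue of Lemma~\ref{nondegenerate} when $\mathcal P/\mathcal P^1\cong\Sp_{2N}(k_F)$, and in the case $\Sp_{2i}(k_F)\times\Sp_{2N-2i}(k_F)$ derive a contradiction from the fact that a factor's simple long root subgroup reduces from a long root subgroup lying in $U^w_{\mathrm{der}}$. Your closing ``one long simple root versus two'' remark is exactly the pigeonhole the paper makes explicit by counting the $N$ long roots of $U^w$ against the $N-1$ lying in $U^w_{\mathrm{der}}$; the only slight imprecision is that for the second special parahoric the simple long root subgroup meets $\mathcal P$ in $\pF^{-1}$ rather than $\oF$, which does not affect the lifting argument.
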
 

\begin{proof} 
Up to conjugacy, we may assume that $\mathcal P$ is standard; in particular, 
using the notation in \ref{S1criterion},  
$\mathcal P$ contains $I$. Then our standard $\mathcal P$  is the group of invertible and 
symplectic elements in the order 
$$
\mathfrak A = 
 \left(\begin{matrix} M_i(\oF) &  M_{i, \, 2N-i}(\oF) & M_i(\pF^{-1}) \cr 
  M_{2N- \; i, \, i}(\pF) & M_{2N-2i}(\oF) &  M_{2N-\;i, \, i}(\oF) \cr
  M_i(\pF) &  M_{i, \,  2N-i}(\pF) & M_i(\oF) 
 \end{matrix}\right)  \qquad   \text{ for some integer } i, 0 \le i \le N . 
$$
Assume first that $ \ \mathcal P/ \mathcal P^1$ is isomorphic to $ \Sp_{2N}(k_F)$, 
i.e. $i=0$ or $N$, 
 and use Proposition \ref{BHprop}. 
If $\sigma$ is regular,  then $\pi$ is generic. 
Conversely  if $\pi$ is generic, there is a maximal unipotent subgroup  $U'$ 
and a character $\chi'$ of $U'$ such that  
$\sigma$ contains $\chi_{|\mathcal P \cap U'}$. The subgroup $U'$ is conjugate to
the subgroup $U$  of all
upper triangular unipotent matrices  so,  using the Iwasawa decomposition 
$G = \mathcal P \mathcal B$ as in \ref{S1criterion},  we may replace $U'$ by $U$. 
Since $\sigma$ is cuspidal, Lemma \ref{nondegenerate}, applied to 
$   \Sp_{2N}(k_F)$,   tells us that 
$\chi_{|\mathcal P \cap U}$ identifies with a nondegenerate character of 
$ \ \mathcal P \cap U / \mathcal P^1 \cap U$, a maximal unipotent subgroup of 
$ \Sp_{2N}(k_F)$, hence $\sigma$ is regular. 

\medskip 
Assume now that $1 \le i \le N-1$: then  $ \ \mathcal P/ \mathcal P^1$  
is  isomorphic to 
$ \Sp_{2i}(k_F)\times \Sp_{2N-2i}(k_F)$, the relevant entries being those in 
$\left(\begin{matrix} \ast & 0 & \ast \cr 0 &  \ast & 0 \cr \ast &0&\ast
 \end{matrix}\right)$  in the above description of $\mathfrak A$. 
Assume for a contradiction that $\pi$ is generic: from Proposition \ref{criterion}, 
plus the inclusion $I_1\subset \mathcal P$, 
 there exist $w\in W$ 
and a   character $\chi$ of $U$   such that\/ $ \sigma$
contains the character $\chi^w$ of\/ $ \mathcal P \cap U^w$.
 Let $\ov{U^w} = \mathcal P \cap U^w /  \mathcal P^1 \cap U^w$  and  let $\ov{\chi^w}$ be  the character of 
 $\ov{U^w}$  defined by $\chi^w$. The group  $\ov{U^w}$ is a maximal unipotent subgroup of 
  $ \ \mathcal P/ \mathcal P^1  $ (a simple 
  combinatoric argument suffices here). 
We will show that, since  $\chi$ is trivial on $U_{\text der}$, the character 
$\ov{\chi^w}$ is degenerate, thus contradicting the cuspidality of $\sigma$.  

To fix ideas, suppose that $w=1$. 
 The intersection of the image of $U$ with $\Sp_{2i}(k_F)$ is the subgroup 
 $\bar U_i$ of  
upper triangular unipotent matrices while the image of $U_{\text der}$ 
contains the simple long root: the restriction of $\bar\chi$ 
to $\bar U_i$ is degenerate hence 
$\sigma$ cannot be cuspidal (Lemma \ref{nondegenerate}).

In general, observe that $w$ must map the $N$ positive long roots 
(corresponding to 
the antidiagonal entries in $U$) onto a set $\mathcal E$ of 
$N$ long roots that correspond to 
the long root entries in $U^w$. Those $N$ long roots separate into $i$ long roots in 
$\bar U^w \cap \Sp_{2i}(k_F)$ and $N-i$ long roots in $\bar U^w \cap \Sp_{2N-2i}(k_F)$. 
The $N-1$ positive not simple long roots corresponding to antidiagonal entries in 
$U_{\text der}$ are sent onto a subset of $N-1$ long roots in $\mathcal E$:
 only one is missing, 
so either in $\bar U^w \cap \Sp_{2i}(k_F) $ 
or in $\bar U^w \cap \Sp_{2N-2i}(k_F) $, 
the unique long root entry that does not belong to the derived group 
does belong to  $U_{\text der}^w$: on this group, the restriction of $\bar\chi^w$ 
is degenerate, so $\sigma$ is not cuspidal. 
\end{proof}

\subsection{The cuspidal representations of ${\bf Sp}_{\bs 4}\bs(\bs {\mathbb F_q}\bs)$ }\label{cuspfinite}

We  come back to  $\Sp(4)$. It has been known for a long time that 
among the cuspidal representations of $\Sp_4(k_F)$,  
  only one  is non-regular, the famous representation $\theta_{10}$ of 
  Srinivasan (\cite{Sp} II.8.3, \cite{Sr}); 
it is the unique cuspidal unipotent representation of $\Sp_4(k_F)$. 
Yet this common knowledge lacks of a reference in the modern setting 
of Deligne-Lusztig characters, we  thus pause here to detail the list 
of  cuspidal representations of $\Sp(4,\mathbb F_q)$ as they arise from the 
Lusztig classification. The necessary background and notations are taken from the book 
\cite{DM}, specially chapter 14. 

\medskip 

For this subsection only we let $G =  \Sp(4,\bar{\mathbb F}_q)$ 
and we let $F$ be the standard Frobenius on $G$, acting as $x \mapsto x^q$ 
on each entry, so that $G^F = \Sp(4,  \mathbb F_q)$. We let $G^\ast$ be the dual group 
$SO(5, \bar{\mathbb F}_q)$ with  standard Frobenius $F^\ast $.

Deligne-Lusztig characters of $G^F$ are parameterized by pairs $(T^\ast, s)$, 
$T^\ast$ an $F^\ast$-stable maximal torus of $G^\ast$ and $s$ an element of 
${T^\ast}^{F^\ast}$, up to 
${G^\ast}^{F^\ast}$-conjugacy. 
A rational series of irreducible characters of $G^F$ is made of all irreducible 
components of Deligne-Lusztig characters $R_{{T}^\ast}^G (s)$ where 
the rational  conjugacy class of $s$ (i.e. the ${G^\ast}^{F^\ast}$-conjugacy class of $s$) is fixed. Rational series
of characters are disjoint 
and exhaust irreducible characters of $G^F$. 
Cuspidal (irreducible) characters are those characters that 
appear in some $R_{{T}^\ast}^G (s)$ for a minisotropic torus $T^\ast$ 
and do not appear in any $R_{{T}^\ast}^G (s)$ where the torus $T^\ast$ 
is contained in a proper $F^\ast$-stable Levi subgroup of $G^\ast$. 

\medskip

Let $s$ be  a rational semi-simple element contained
 in an $F^\ast$-stable maximal torus $T^\ast$ of $G^\ast$, let $C_{G^\ast}(s)$ 
 (resp. $C_{G^\ast}^o(s) $) be its centralizer in $G^\ast$ (resp. the connected component 
 of its centralizer) and let $W(s)$ (resp. $W^o(s)$) be the Weyl group of $C_{G^\ast}(s)$ 
 (resp. $C_{G^\ast}^o(s) $) relative to $T^\ast$, contained in the Weyl group  $W(T^\ast)$
 of $G^\ast$ relative to $T^\ast$.

For $w$ in $W(T^\ast)$, there exists  an $F^\ast$-stable maximal torus $T_w^\ast$ of $G^\ast$
of type $w$ with respect to $T^\ast$ and containing  $s$ 
if and only if $w$ belongs to $W^o(s)$. 
Letting $x$ be the type of $T^\ast$ with respect to some split torus, 
one defines by the formula 
$$
\chi(s) = (-1)^{l(x)} |W^o(s)|^{-1}  \sum_{w \in W^o(s)} (-1)^{l(w) } R_{T_w^\ast}^G (s)   
$$
a proper character 
$\chi(s)$ which is a multiplicity one sum of regular irreducible characters, each appear\-ing 
with multiplicity  $\pm 1$   in Deligne-Lusztig characters  underlying the series attached to $s$. 
We have  $$ \aligned  
  &\left\langle R_{T_w^\ast}^G (s),  R_{T_w^\ast}^G (s) \right\rangle_{G^F} =
   \text{Card }   W(s)  ^{w  F^\ast}  \  \text{ and }   \ 
 \left\langle \chi(s), \chi(s)\right\rangle_{G^F} = |(W(s)/W^o(s))^{F^\ast}|   \endaligned $$ 
and the results in {\it loc. cit.},  \S 14,   
  imply the following, for the 
rational series of characters  attached to  $s$: 
\begin{enumerate}
	\item\label{mini} 
	If only minisotropic rational maximal tori contain $s$, all characters in the series are cuspidal. 
	The number of regular cuspidal characters in the series is the number of components of $\chi(s)$.  
	\item\label{nomini} 
	If no minisotropic rational maximal torus contains $s$, there is no cuspidal  character  in the series. 
	\item\label{mixed}
	  If at least one minisotropic rational maximal torus and at least one non-minisotropic rational maximal torus contain $s$,
	no cuspidal character in the series (if any) is regular.
	\item\label{onemini} 
	If exactly one minisotropic rational maximal torus 
	(up to rational conjugacy) and at least one non-minisotropic rational maximal torus contain $s$,
	 there is no cuspidal  character  in the series.  
\end{enumerate}

The Weyl group of $G^\ast$ has $8$ elements. A  rational maximal torus of type $w$ 
	 with respect to a split torus is minisotropic if and only if $w$ is either a Coxeter element $h$
	 (there are two of them, conjugate in the Weyl group) or the element of maximal length $w_0$. 
	 Rational points of such a torus are conjugate to ${T_0^\ast}^{w F^\ast}$, isomorphic 
	  to $$\aligned 
	  &\mathbb K_4^2 = \ker N_{{\mathbb F}_{q^4}^\times/ {\mathbb F}_{q^2}^\times} \qquad \qquad \text{  for } w=h , \\
	 &\mathbb K_2^1 \times \mathbb K_2^1 = 
	 \ker N_{{\mathbb F}_{q^2}^\times/ {\mathbb F}_q^\times} \times \ker N_{{\mathbb F}_{q^2}^\times/ {\mathbb F}_q^\times} 
	\quad  \text{ for } w=w_0. \endaligned$$ 
	 
	 \medskip 
	 
   The table below lists the families of 
   geometric conjugacy classes of rational semi-simple elements of $G^\ast$ 
 through a representative $s_0$ (not necessarily rational) in   the diagonal torus 
 
$$ T_0^\ast    = \{ \  t^\ast(\lambda, \mu) =  \left(\begin{smallmatrix} \lambda & & & &  \cr & \mu & & & \cr 
 &&1&&
 \cr &   & & \mu^{-1} & \cr  & & &&  \lambda^{-1} \end{smallmatrix}\right)
\ / \  \lambda,   \mu \in  \bar {\mathbb F}_q^\times 
   \} , 
   $$ 
   
using the following notation:  we fix 
	 $\zeta_4$, a primitive $(q^4-1)$-th root of unity in  $\bar {\mathbb F}_q^\times$,  
	and we let  
	$$\zeta_{4,2} = \zeta_4^{q^2-1}, \quad \zeta_2 = \zeta_4^{q^2+1}, \quad  
	  \zeta_{2,1} = \zeta_2^{q-1} \quad  \text{ and  } \   \zeta = \zeta_2^{q+1}.$$ 
	  
	  \bigskip
	 
	 	In cases 13 and 14,   $\chi(s) = R_{T^\ast}^G (s)$  is irreducible, cuspidal and regular (it actually 
contains any nondegenerate character of a maximal unipotent subgroup) (\ref{mini}); we get 
   $\ \dfrac {(q-1)(q-3)} 8 + \dfrac {(q^2-1)}  4 \ $    equivalence  classes of such representations. 
   
   \bigskip 
   
	Cases  4, 5, 6, 7, 8 and 9 give no cuspidal representations   (\ref{nomini}), neither do 
 cases 10 and 12 (\ref{onemini}). Cases  2 and 3 each determine two rational series, in which again 
 (\ref{onemini}) applies: they contain no cuspidal. 
 
 \bigskip
 
 Missing   cuspidals  (\cite{Sp} II.8.3, \cite{Sr}) now must come from cases 1 and 11. Indeed 
 case 11 produces two rational series, one of which satisfying (\ref{nomini}),  but the other (\ref{mini}), 
 for a torus of type $w_0$. Here $\chi(s) = R_{T^\ast}^G (s)$  is the sum of two irreducible,  cuspidal and regular (but 
 for different choices
	of  a nondegenerate character of a maximal unipotent subgroup) representations and we get 
 $ \  2 \  \dfrac {(q-1)} 2 \ $       equivalence  classes of such representations. 
 
 \bigskip 
 
	 Last, case 1 gives the so-called unipotent series, which for $ \Sp(4,  \mathbb F_q)$ 
	 contains exactly one cuspidal representation (\cite{L}, Theorem 3.22), non-regular by (\ref{mixed}).

\newpage

	$$
	\vbox{\offinterlineskip\halign{&\vrule#&   \strut\quad\hfil#\quad
	\cr
		\noalign{\hrule}height4pt&\omit&&\omit&&\omit&&\omit&&\omit&&\omit& 
	\cr
		 height4pt&\omit&&\omit&&\omit&&\omit&&\omit&&\omit& 
	\cr 
		&\hskip-2.5mmCase\hskip-2.5mm&&  $s_0$  &&condition&&number&&$W^o(s_0) $ &&\hskip-2.5mm$|W(s_0)/W^o(s_0)|$\hskip-2.5mm & 
		\cr
		 height4pt&\omit&&\omit&&\omit&&\omit&&\omit&&\omit& 
	\cr
	\noalign{\hrule}height4pt&\omit&&\omit&&\omit&&\omit&&\omit&&\omit& 
	 \cr
	  height4pt&\omit&&\omit&&\omit&&\omit&&\omit&&\omit& 
	  \cr
	  &1&&$ t^\ast(1, 1) $&& &&$1$&&$W$&&$1$& 
	  \cr
	   height4pt&\omit&&\omit&&\omit&&\omit&&\omit&&\omit&  
	   \cr
	   \noalign{\hrule}height4pt&\omit&&\omit&&\omit&&\omit&&\omit&&\omit& 
	 \cr
	  height4pt&\omit&&\omit&&\omit&&\omit&&\omit&&\omit& 
	  \cr
	  &2&&$ t^\ast(-1, -1) $&& &&$1$&&$\left\langle s_{\alpha'}, s_{\alpha'+2\beta'}\right\rangle$&&$ 2$& 
	  \cr
	   height4pt&\omit&&\omit&&\omit&&\omit&&\omit&&\omit&  
	   \cr
	   \noalign{\hrule}height4pt&\omit&&\omit&&\omit&&\omit&&\omit&&\omit& 
	 \cr
	  height4pt&\omit&&\omit&&\omit&&\omit&&\omit&&\omit& 
	  \cr
	  &3&&$ t^\ast(-1, 1) $&& &&$1$&&$\left\langle  s_{\beta'}\right\rangle$&&$ 2$& 
	  \cr
	   height4pt&\omit&&\omit&&\omit&&\omit&&\omit&&\omit&  
	   \cr
	   \noalign{\hrule}height4pt&\omit&&\omit&&\omit&&\omit&&\omit&&\omit& 
	 \cr
	  height4pt&\omit&&\omit&&\omit&&\omit&&\omit&&\omit& 
	  \cr
	  &4&&$ t^\ast(\zeta^i , 1  ) $&&$\zeta^i \ne \pm 1  $&&$\dfrac {q-3} 2 $&&$\left\langle  s_{\beta'}\right\rangle$&&$1$& 
	  \cr
	   height4pt&\omit&&\omit&&\omit&&\omit&&\omit&&\omit&  
	   \cr
	    \noalign{\hrule}height4pt&\omit&&\omit&&\omit&&\omit&&\omit&&\omit& 
	 \cr
	  height4pt&\omit&&\omit&&\omit&&\omit&&\omit&&\omit& 
	  \cr
	  &5&&$ t^\ast(\zeta^i, -1 ) $&&$\zeta^i \ne \pm 1  $&&$\dfrac {q-3} 2 $&&$1$&&$ 2$& 
	  \cr
	   height4pt&\omit&&\omit&&\omit&&\omit&&\omit&&\omit&  
	   \cr
	    \noalign{\hrule}height4pt&\omit&&\omit&&\omit&&\omit&&\omit&&\omit& 
	 \cr
	  height4pt&\omit&&\omit&&\omit&&\omit&&\omit&&\omit& 
	  \cr
	  &6&&$ t^\ast(\zeta^i, \zeta^i) $&&$\zeta^i \ne \pm 1  $&&$\dfrac {q-3} 2 $&&$\left\langle s_{\alpha'}   \right\rangle$&&$1$& 
	  	\cr
	   height4pt&\omit&&\omit&&\omit&&\omit&&\omit&&\omit&  
	   \cr
	    \noalign{\hrule}height4pt&\omit&&\omit&&\omit&&\omit&&\omit&&\omit& 
	\cr 
	height4pt&\omit&&\omit&&\omit&&\omit&&\omit&&\omit& 
	\cr
&\omit&&\omit&&$\zeta^i \ne \pm 1   $&&\omit&&\omit&&\omit& 
	\cr
	&7&&$ t^\ast(\zeta^i, \zeta^j) $&&$  \zeta^j \ne \pm 1 $&&$\dfrac {(q-3)(q-5)} 8 $&&$ 1$&& $1$& 
\cr
&\omit&&\omit&&$\zeta^i \ne    \zeta^{\pm j}   $&&\omit&&\omit&&\omit& 
	 \cr
	 \noalign{\hrule}height4pt&\omit&&\omit&&\omit&&\omit&&\omit&&\omit& 
	 \cr
	  height4pt&\omit&&\omit&&\omit&&\omit&&\omit&&\omit& 
	   \cr
	     &\omit&&\omit&&$\zeta_2^i \notin \mathbb K_2^1 $ && && && & 
	  \cr
	  &8&&$ t^\ast(\zeta_2^i, \zeta_2^{qi}) $&&$\zeta_2^i \notin \mathbb F_q^\times $ && $\dfrac {(q-1)^2} 4$&&$1$ && $1$& 
	   \cr
	   height4pt&\omit&&\omit&&\omit&&\omit&&\omit&&\omit&  
	   \cr
	   \noalign{\hrule}height4pt&\omit&&\omit&&\omit&&\omit&&\omit&&\omit& 
	   \cr
	    height4pt&\omit&&\omit&&\omit&&\omit&&\omit&&\omit& 
	   \cr
	     &\omit&&\omit&&$\zeta^i \ne \pm 1 $ && && && & 
	    \cr
	      &9&&$ t^\ast(\zeta^i,\zeta_{2,1}^j ) $&&  $\zeta_{2,1}^j \ne \pm 1 $&& $\dfrac {(q-1)(q-3)} 4$ &&$1$&& $1$& 
	    \cr
	     height4pt&\omit&&\omit&&\omit&&\omit&&\omit&&\omit& 
	     \cr
	        \noalign{\hrule}height4pt&\omit&&\omit&&\omit&&\omit&&\omit&&\omit& 
	 \cr
	  height4pt&\omit&&\omit&&\omit&&\omit&&\omit&&\omit& 
	  \cr
	  &10&&$ t^\ast(1, \zeta_{2,1}^{i}) $&&$\zeta_{2,1}^i \ne \pm 1$&& $\dfrac {(q-1)} 2 $&&$\left\langle  s_{\alpha'+\beta'}\right\rangle$ &&$1$ & 
	  \cr
	   height4pt&\omit&&\omit&&\omit&&\omit&&\omit&&\omit&  
	   \cr
	        \noalign{\hrule}height4pt&\omit&&\omit&&\omit&&\omit&&\omit&&\omit& 
	 \cr
	  height4pt&\omit&&\omit&&\omit&&\omit&&\omit&&\omit& 
	  \cr
	  &11&&$ t^\ast(-1, \zeta_{2,1}^{i}) $&&$\zeta_{2,1}^i \ne \pm 1$&&$ \dfrac {(q-1)} 2 $&&$1$ &&$ 2$ & 
	  \cr
	   height4pt&\omit&&\omit&&\omit&&\omit&&\omit&&\omit&  
	   \cr
	        \noalign{\hrule}height4pt&\omit&&\omit&&\omit&&\omit&&\omit&&\omit& 
	 \cr
	  height4pt&\omit&&\omit&&\omit&&\omit&&\omit&&\omit& 
	  \cr
	  &12&&$ t^\ast(\zeta_{2,1}^i, \zeta_{2,1}^{-i}) $&&$\zeta_{2,1}^i \ne \pm 1$&&$ \dfrac {(q-1)} 2 $&&$\left\langle  s_{\alpha'+2\beta'}\right\rangle$ &&$1$ & 
	  \cr
	   height4pt&\omit&&\omit&&\omit&&\omit&&\omit&&\omit&  
	   \cr
	       \noalign{\hrule}height4pt&\omit&&\omit&&\omit&&\omit&&\omit&&\omit& 
	       \cr
	        height4pt&\omit&&\omit&&\omit&&\omit&&\omit&&\omit& 
	           \cr
&\omit&&\omit&&$\zeta_{2,1}^i \ne \pm 1   $&&\omit&&\omit&&\omit& 
	        \cr
	        &13&&$ t^\ast(\zeta_{2,1}^i, \zeta_{2,1}^j) $&&$\zeta_{2,1}^j \ne \pm 1   $&&$\dfrac {(q-1)(q-3)} 8 $  &&$1$ &&$1$ & 
	        \cr
&\omit&&\omit&&$\zeta_{2,1}^i \ne \zeta_{2,1}^{\pm j}   $&&\omit&&\omit&&\omit& 
	        \cr
	           height4pt&\omit&&\omit&&\omit&&\omit&&\omit&&\omit& 
	         \cr
	          \noalign{\hrule}height4pt&\omit&&\omit&&\omit&&\omit&&\omit&&\omit& 
	     \cr
	      height4pt&\omit&&\omit&&\omit&&\omit&&\omit&&\omit& 
	      \cr
	   	   &14&&$ t^\ast(\zeta_{4,2}^{qi}, \zeta_{4,2}^{i}) $&&$\zeta_{4,2}^{i} \ne \pm 1$&&   
  $\dfrac {q^2-1} 4 $ && $1$ && $1$ & 
	      \cr
	       height4pt&\omit&&\omit&&\omit&&\omit&&\omit&&\omit& 
	       \cr
	         \noalign{\hrule}\noalign{\bigskip} 
	         \cr}}$$

\setcounter{section}{2}

\Section{The function ${\bs \psi}_{\bs \b}$ on maximal unipotent subgroups}\label{S4}

 A key step in the determination of Whittaker functions in $\GL_N(F)$ in  \cite{BH}  
 is the construction of a maximal unipotent subgroup $U$ of  $\GL_N(F)$ 
 on which $\psi_\b$ defines a character (\it loc. cit.  \rm propositions 2.1 and 2.2). 
This will be a key step indeed  in the determination of generic supercuspidal representations of $\Sp_4(F)$: 
the existence of such a subgroup on which $\psi_\b$ defines a {\bf non degenerate} character will turn out to be a sufficient condition 
for genericity (see \S \ref{S5}), whereas the non existence will imply non genericity (see \S \ref{S6}). In cases where such a $U$ exists but 
$\psi_\b$ is degenerate, we will find both generic and non generic representations.

\subsection{The quadratic form ${\bs h}{\bs (}{\bs v}{\bs ,} {\bs \b }{\bs  v}{\bs )}$ \label{quadratic} }
 
 \begin{Proposition}\label{flag} 
 Let $\b$ be an element of $A$ such that $\bar \b = - \b  $ and let $\psi_\b$ be the function on $G$ defined by 
 $\psi_\b (x) = \psi\left( \tr (\b (x-1))\right)$, $x \in G$. 
 The following are equivalent: 
 
 \begin{enumerate}
 \item There   exists a   maximal unipotent subgroup $  U$ of $  G$ such that the restriction of $\psi_\b$ to $U$ 
 is a character of $U$. 
\item There   exists a   maximal unipotent subgroup $  U$ of $  G$ such that $\psi_\b (x)=1$ for all $x \in    U_{\text{\rm der}}$. 
\item  There exists a totally isotropic  flag of subspaces of $V$: 
$$\{0\} \subset V_1 \subset V_2 \subset V_3 \subset V$$ 
such that $\b V_i \subset V_{i+1}$ for $i=1,2$.  
\item  The quadratic form $v \mapsto h(v, \b  v) $ on $V$ has non trivial isotropic vectors.   
\end{enumerate}
 
\end{Proposition}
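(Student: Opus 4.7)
\emph{Strategy.} I plan to prove the cycle (1)$\Rightarrow$(2)$\Rightarrow$(3)$\Rightarrow$(1) together with the geometric equivalence (3)$\Leftrightarrow$(4). The unifying observation is that $\bar\beta=-\beta$ forces the bilinear form $B(v,w)=h(v,\beta w)$ to be symmetric:
$$h(v,\beta w)=-h(\beta w,v)=-h(w,\bar\beta v)=h(w,\beta v).$$
Hence $Q(v)=h(v,\beta v)$ is a genuine quadratic form with polarisation $B$, and this is the bridge between the trace-theoretic conditions (1)--(2) and the geometric conditions (3)--(4).

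\emph{The geometric equivalence (3)$\Leftrightarrow$(4).} For (3)$\Rightarrow$(4), any nonzero $v\in V_1$ satisfies $\beta v\in V_2\subset V_3=V_1^\perp$, so $Q(v)=0$. For the converse, take a nonzero isotropic $v_0$ and set $V_1=\langle v_0\rangle$, $V_3=V_1^\perp$. To build the maximal isotropic $V_2$: if $\beta v_0\in V_1$, take any maximal isotropic plane of $V$ containing $V_1$; otherwise put $V_2=\langle v_0,\beta v_0\rangle$, which is isotropic because $h(v_0,\beta v_0)=Q(v_0)=0$. The remaining condition $\beta V_2\subset V_1^\perp$ reduces to $h(v_0,\beta^2 v_0)=0$, which follows from
$$h(v_0,\beta^2 v_0)=B(v_0,\beta v_0)=B(\beta v_0,v_0)=h(\beta v_0,\beta v_0)=0$$
by symmetry of $B$ and alternating-ness of $h$.

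\emph{The trace-theoretic cycle.} The implication (1)$\Rightarrow$(2) is automatic, since characters vanish on commutators. For (3)$\Rightarrow$(1), fix a basis $(f_1,\dots,f_4)$ of $V$ adapted to the flag (so $V_i=\langle f_1,\dots,f_i\rangle$) and let $U$ stabilise it. In this basis (3) is the vanishing of the three entries $\beta_{j,i}$ for $(j,i)\in\{(3,1),(4,1),(4,2)\}$, while for $x,y\in U$ the product $(x-1)(y-1)$ is doubly strictly upper-triangular, with support in exactly the complementary positions $\{(1,3),(1,4),(2,4)\}$. A short trace calculation gives $\mathrm{tr}(\beta(x-1)(y-1))=0$, which is equivalent to $\psi_\beta(xy)=\psi_\beta(x)\psi_\beta(y)$, so $\psi_\beta|_U$ is a character of $U$.

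\emph{The main obstacle: (2)$\Rightarrow$(3).} This step is where both the skew-symmetry $\bar\beta=-\beta$ and the hypothesis $p\neq 2$ genuinely intervene. Given a maximal unipotent $U$ with $\psi_\beta|_{U_{\mathrm{der}}}=1$, I work in the standard symplectic basis so that $U$ is upper-triangular unipotent; then $U_{\mathrm{der}}$ is parametrised by two free coordinates $x,y\in F$ (the free off-diagonal entries remaining after killing the simple-root parameters $u,v$). A direct trace computation gives $\mathrm{tr}(\beta(g-1))=(\beta_{3,1}+\beta_{4,2})x+\beta_{4,1}y$ for $g\in U_{\mathrm{der}}$, and requiring that this lie in $\pF$ for every $x,y\in F$, together with free scaling in $F^\times$, forces $\beta_{3,1}+\beta_{4,2}=0$ and $\beta_{4,1}=0$. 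The closing input is to translate $\bar\beta=-\beta$ into matrix coordinates via the Gram matrix of $h$: this produces the symplectic identity $\beta_{4,2}=\beta_{3,1}$, so the first equation becomes $2\beta_{3,1}=0$, and since $p\neq 2$ we conclude $\beta_{3,1}=\beta_{4,1}=\beta_{4,2}=0$. This is precisely the flag condition of (3) for the flag stabilised by $U$, which closes the cycle.
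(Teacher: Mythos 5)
Your proof is correct and follows essentially the same route as the paper: the equivalence of (1)--(3) is exactly the part the paper dismisses as ``straightforward, a variant of [BH] 2.1'' (your coordinate computation, with $\bar\beta=-\beta$ giving $\beta_{4,2}=\beta_{3,1}$ and $p\neq 2$ killing $2\beta_{3,1}$, is the intended argument made explicit), and your construction of the flag from an isotropic vector for (4)$\Rightarrow$(3) is the paper's, with the symmetry of $B(v,w)=h(v,\beta w)$ playing the role of the paper's remark that $\beta V_1\subset V_2$ and $\beta V_2\subset V_3$ are equivalent for a self-dual flag. The only loose end is the colinear case $\beta v_0\in V_1$ of (4)$\Rightarrow$(3), where you still need $\beta V_2\subset V_1^\perp$ for the arbitrarily chosen Lagrangian $V_2\supset V_1$; this follows by the same one-line symmetry argument, namely $h(v_0,\beta w)=h(w,\beta v_0)=c\,h(w,v_0)=0$ for $w\in V_2$, so there is no genuine gap.
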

 
 \begin{proof}
 The equivalence of the first three statements is straightforward and a variant of 
 \cite{BH}   2.1; note that  a maximal flag of subspaces of $V$ determines a maximal unipotent subgroup of $G$ if and only if it is totally isotropic.
 
  Certainly (iii) implies (iv):  a basis vector $v$ for $V_1$ satisfies $h(v, \b  v)=0$ since $V_2$ is its own orthogonal. 
 Assuming (iv), let $v$ be 
    a non-zero vector   in $V$ such that   $h(v, \beta v) =0$ and  put $V_1= {\text{\rm Span }} \{v\}$, 
    $V_3 = V_1^\perp$. If $\b v$ and $v$ are colinear, let $V_2$ be any totally isotropic $2$-dimensional subspace of $V$ containing $V_1$, otherwise put 
     $V_2= {\text{\rm Span }} \{v, \beta v\}$: (iii) is satisfied since, for a totally isotropic flag as in (iii), the conditions 
    $\b V_1 \subset V_{2}$ and  $\b V_2 \subset V_{3}$ are equivalent (recall  
   $\bar\b  = -\b  $).   
 \end{proof}
 
  \begin{Remark}\label{flagdegenerate} Assume the conditions in Proposition \ref{flag}  hold and let 
  $U$ be a maximal unipotent subgroup  of $  G$ attached to a totally isotropic flag  $\{0\} \subset V_1 \subset V_2 \subset V_3 \subset V$  
such that $\b V_i \subset V_{i+1}$ for $i=1,2$.  A simple inspection shows that the character $\psi_\b$ of  $U$ is non degenerate if and only if 
$\b V_1 $ is not contained in $  V_1$ and $\b V_2 $ is not contained in $  V_2$. 
  \end{Remark}
 
 We need to investigate those cases where the element $\b$ appears in a skew semi-simple stratum $[\Lambda, n, 0 , \beta]$  as 
 listed  in \S\ref{construction}. 
We need an extra piece of notation in cases \ref{case4} or \ref{2then0},  where the stratum is simple: the field extension $E=F[\b]$ has degree $4$ or $2$; 
  we let $E_0$ be the field of fixed points of the involution $x \mapsto \bar x$ on $E$, so that 
  $[E:E_0]=2$,  and we define a skew-hermitian form  $\d$ on $V$ relative to $E/E_0$   by 
  \begin{equation}
h(av,w)= \trEF(a \d(v,w)) \  \text{ for all } a \in E, v,w \in V . \label{skewform}
\end{equation}
(This notation will also be used in case \ref{2plus2} when $E_1$ and $E_2$ are isomorphic, with $E=E_1$.)
The determinant of $\d$ belongs to $F^\times$ if $[E:F]=2$; it is a skew element in $E^\times$ if $[E:F]=4$.

\begin{Proposition}\label{Uderprop} 
Let $\b$ be an element of $A$ appearing in a skew semi-simple stratum 
 $[\L, n, 0 , \beta]$   as in  \S\ref{construction}. The only cases in which there does not exist 
   a   maximal unipotent subgroup $U$ of $G$ on which  $\psi_\b $ is a character are the following: 

\begin{enumerate}
\item   The element $\b$ generates a biquadratic extension  $E=F[\b]$ of  $F$ (case \ref{case4})   and the coset  
$  \beta \,  \det (\d )  \,  N_{E/E_0}(E^\times)$ in $E_0^\times$   is the  $ N_{E/E_0}(E^\times)$-coset  
 that does not contain the kernel of $ \, {\hbox{\rm tr}}_{E_0/F}  $. 

\item  The element $\b$ generates a  quadratic extension  $E=F[\b]$ of  $F$    (case \ref{2then0})    and  the skew-hermitian form 
$\d$ on $V$, $\dim_E V=2$, is anisotropic -- that is,  $ \  \det (\d)  \notin N_{E/F}(E^\times)$. 

\item   The symplectic space $V$ decomposes as $V = V^1  \bot V^2$  and the element $\b $ decomposes accordingly as 
$\b = \b_1 + \b_2$ where for $i=1,2$, $\b_i$ generates a quadratic extension   $E_i=F[\b_i]$ (case \ref{2plus2}),   $E_1 $ is isomorphic to $E_2 $   and    
$ \ \b_1/\b_2 \notin  \,  \det (\d) \,  N_{E/F}(E^\times).$  
\end{enumerate}
\end{Proposition}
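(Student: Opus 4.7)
The plan is to invoke Proposition~\ref{flag} and reduce the question to: when is the quadratic form $Q_\b(v) := h(v,\b v)$ on $V$ anisotropic? In each case I rewrite $Q_\b$ through the skew-hermitian form $\d$ defined by \eqref{skewform}, pick a basis that diagonalizes $\d$, and read off the isotropy condition.

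For Case~(\ref{case4}), $V$ is one-dimensional over $E = F[\b]$ of degree $4$, with quadratic fixed field $E_0$. Fixing a generator $e$ and setting $d = \d(e,e)$, the identities $\bar d = -d$ and $\bar\b = -\b$ give $\b d \in E_0$, and a direct computation using $h(ae,e) = \trEF(a d)$ together with $\trEF = \trEOF \circ \tr_{E/E_0}$ yields
\[
Q_\b(xe) \;=\; -2\,\trEOF\!\bigl(\b d \cdot N_{E/E_0}(x)\bigr).
\]
Hence $Q_\b$ has a nonzero isotropic vector iff the coset $\b d \cdot N_{E/E_0}(E^\times) \subset E_0^\times$ meets $\ker\trEOF$. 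A short transfer/Verlagerung computation (using that $\text{Gal}(E/F) = (\mathbb{Z}/2)^2$ has exponent $2$ in the biquadratic case, so the abelian transfer to the index-two subgroup $\text{Gal}(E/E_0)$ is trivial) shows $F^\times \subset N_{E/E_0}(E^\times)$; hence the nonzero $F$-line $\ker\trEOF \setminus\{0\}$ is an $F^\times$-orbit entirely contained in one of the two cosets of $N_{E/E_0}(E^\times)$ in $E_0^\times$ (the index being $2$ by local class field theory). Identifying $d \equiv \det\d \pmod{N_{E/E_0}(E^\times)}$ then yields~(i).

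For Case~(\ref{2then0}), where $E_0 = F$ and $\dim_E V = 2$, diagonalize $\d$ over $E$ as $\langle \b c_1, \b c_2 \rangle$ with $c_i \in F^\times$; using $\b^2 \in F$ gives
\[
Q_\b(v) \;=\; -2\b^2\bigl(c_1 N_{E/F}(x_1) + c_2 N_{E/F}(x_2)\bigr),
\]
so $Q_\b$ and $\d$ are simultaneously isotropic. Since $N_{E/F}(\b) = -\b^2 \in N_{E/F}(E^\times)$ and $F^{\times 2} \subset N_{E/F}(E^\times)$, one finds $\d$ anisotropic iff $\det\d = \b^2 c_1 c_2 \notin N_{E/F}(E^\times)$, giving~(ii). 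For Case~(\ref{2plus2}), the splitting $V = V^1 \perp V^2$ is $\b$-stable with $\b|_{V^i} = \b_i$, hence $Q_\b = Q_{\b_1} \perp Q_{\b_2}$; applying the one-dimensional analog of the preceding calculation to each summand yields
\[
Q_\b(v) \;=\; -2\bigl(\b_1^2 c_1 N_{E_1/F}(x_1) + \b_2^2 c_2 N_{E_2/F}(x_2)\bigr),
\]
with $\d_i(e_i,e_i) = \b_i c_i$, $c_i \in F^\times$. An isotropic $v$ requires both $x_i \ne 0$, whence $-\b_1^2 c_1/(\b_2^2 c_2) \in N_{E_1/F}(E_1^\times) \cdot N_{E_2/F}(E_2^\times)$. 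If $E_1 \not\cong E_2$, these two distinct index-two subgroups of $F^\times$ generate $F^\times$, and isotropy is automatic. If $E_1 \cong E_2 \cong E$, the product equals $N_{E/F}(E^\times)$, and using $\b_1/\b_2 \in F$, $(\b_1/\b_2)^2 \in F^{\times 2} \subset N_{E/F}(E^\times)$, and $-\b_i^2 \in N_{E/F}(E^\times)$ the condition reduces to $c_1 c_2 \in N_{E/F}(E^\times)$, equivalently $\b_1/\b_2 \in \det(\d) \cdot N_{E/F}(E^\times)$ with $\det\d = \b_1 \b_2 c_1 c_2$, giving~(iii). Case~(\ref{2plus0}) is omitted because any nonzero $v \in V^2$ satisfies $\b v = 0$, hence is trivially $Q_\b$-isotropic.

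The main obstacle will be bookkeeping to ensure that the stated conditions are intrinsic --- independent of the choice of generator $e$, of the diagonalizing basis $(e_1,e_2)$, and (in Case~(\ref{2plus2})) of the isomorphism $E_1 \cong E_2$. Each check reduces to tracking that rescaling $e$ or $e_i$ by $\a$ multiplies $d$ or $c_i$ by a norm from the corresponding subfield, so the relevant coset is well-defined in $E_0^\times / N_{E/E_0}(E^\times)$ or $F^\times / N_{E/F}(E^\times)$. The auxiliary fact $F^\times \subset N_{E/E_0}(E^\times)$ in the biquadratic setting is the most delicate ingredient, but follows cleanly from the transfer map being trivial on a group of exponent~$2$.
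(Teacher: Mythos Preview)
Your reduction via Proposition~\ref{flag} to the isotropy of $Q_\b(v)=h(v,\b v)$, and the systematic use of the skew-hermitian form $\d$, match the paper's approach; Cases~(\ref{2then0}), (\ref{2plus2}) and (\ref{2plus0}) are handled correctly and essentially as in \S\ref{other}. The transfer/Verlagerung argument for $F^\times\subset N_{E/E_0}(E^\times)$ in the biquadratic situation is a pleasant alternative to the paper's direct ramification analysis in the Lemma of \S\ref{biquad}.

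There is, however, a genuine gap in Case~(\ref{case4}): you only treat the biquadratic subcase. When $E=F[\b]$ has degree~$4$ over $F$ but is \emph{not} biquadratic, you give no argument that $Q_\b$ is isotropic, yet this is needed to conclude that condition~(i) of the Proposition is the \emph{only} obstruction in Case~(\ref{case4}). Your transfer argument genuinely fails there: for instance if $\text{Gal}(E/F)\cong\BZ/4\BZ$ the transfer to the index-$2$ subgroup $\text{Gal}(E/E_0)$ is surjective rather than trivial, so $F^\times$ is not contained in $N_{E/E_0}(E^\times)$ and the $F^\times$-orbit $\ker\trEOF\setminus\{0\}$ need not sit in a single norm coset.

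The paper closes this gap by a different route: the determinant of $Q_\b$ on the $4$-dimensional space $V$ is $\det\b=N_{E/F}(\b)$, and the Lemma in \S\ref{biquad} shows that $N_{E/F}(\b)$ is a square in $F^\times$ if and only if $E$ is biquadratic. Since the unique anisotropic $4$-dimensional quadratic form over a non-archimedean local field has square discriminant, $Q_\b$ is automatically isotropic whenever $E$ is not biquadratic. You should insert this determinant argument before restricting to the biquadratic case; after that, your transfer computation completes the proof cleanly.
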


 \begin{Remark}\label{Uderpropdegenerate} Let $\b$ be  as above. Assume there  exists 
   a   maximal unipotent subgroup $U$ of $G$ on which  $\psi_\b $ is a character.  Then:
\begin{itemize}
	\item in cases \ref{case4} and  \ref{2plus2} the character $\psi_\b $ of $U$  is   non degenerate;  
	\item in cases  \ref{2then0} and \ref{2plus0} the character $\psi_\b $ of $U$  is    degenerate. 
\end{itemize}
  \end{Remark}

 The proof of those statements occupies the next two subsections.  We recall that,  
  up to isomorphism, there is exactly one   anisotropic   quadratic form on $V$:  its determinant is a square and its Hasse-Minkowski symbol is equal to  $-(-1,-1)_F$ (\cite{O}, \S 63C).

\subsection{The biquadratic extension}\label{biquad}

Let us  examine the  case  of a maximal simple stratum (case \ref{case4}).  The determinant of the  quadratic form $v \mapsto h(v, \b  v) $ on $V$ is the determinant of $\beta$, i.e. $N_{E/F}(\beta)$. 

\begin{Lemma}   The norm $N_{E/F}(\beta)$ of $\beta$ is a square in $F^\times$ if and only if   $E$ is biquadratic.   If this holds we have: $N_{E/E_0}(E^\times) =  {F^\times} {E_0^\times}^2$. 
\end{Lemma}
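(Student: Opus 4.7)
The plan is to reduce $N_{E/F}(\beta)$ to $N_{E_0/F}(\beta^2)$ by transitivity, handle the equivalence by producing or obstructing an order-two extension of $\mathrm{Gal}(E_0/F)$ to $\mathrm{Gal}(E/F)$, and treat the second assertion by a subgroup index comparison using local class field theory.

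For the reduction, using $\ov\beta=-\beta$, $[E:E_0]=2$ and $[E_0:F]=2$, I have $N_{E/E_0}(\beta)=\beta\ov\beta=-\beta^2\in E_0$ and hence $N_{E/F}(\beta)=N_{E_0/F}(-\beta^2)=N_{E_0/F}(\beta^2)=\beta^2\tau(\beta^2)$, where $\tau$ denotes the non-trivial element of $\mathrm{Gal}(E_0/F)$. Note that $E_0=F[\beta^2]$ and $E=E_0[\beta]$.

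For the equivalence, if $E/F$ is biquadratic I pick the order-two extension $\tilde\tau$ of $\tau$ in $\mathrm{Gal}(E/F)$ and compute $N_{E/F}(\beta)=\beta\ov\beta\,\tilde\tau(\beta)\tilde\tau(\ov\beta)=(\beta\tilde\tau(\beta))^2$, with $\beta\tilde\tau(\beta)\in F$ because it is fixed by both the involution and $\tilde\tau$. Conversely, assuming $N_{E/F}(\beta)=f^2$ with $f\in F^\times$, the identity $\tau(\beta^2)=f^2/\beta^2=(f/\beta^2)^2$ allows me to define an $F$-automorphism $\tilde\tau$ of $E=E_0[\beta]$ by $\tilde\tau|_{E_0}=\tau$ and $\tilde\tau(\beta)=c\beta$, where $c=f/\beta^2\in E_0$; the relation $c^2\beta^2=\tau(\beta^2)$ makes this well defined. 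A direct computation yields $\tilde\tau^2(\beta)=c\tau(c)\beta=N_{E_0/F}(c)\beta$, and here $N_{E_0/F}(c)=f^2/N_{E_0/F}(\beta^2)=f^2/f^2=1$, so $\tilde\tau$ has order two. This produces a second order-two element of $\mathrm{Gal}(E/F)$ besides the involution, forcing $E/F$ to be biquadratic. The technical heart of the argument is this last step: the specific form $c=f/\beta^2$ is what automatically yields $\tilde\tau^2=\mathrm{id}$, ruling out simultaneously the cyclic case $\mathrm{Gal}(E/F)\cong\mathbb Z/4\mathbb Z$ (where one would only get $\tilde\tau^2=\ov{\phantom x}$) and the possibility that $E/F$ is not Galois at all.

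For the norm group, assume now $E/F$ biquadratic with quadratic subfields $E_0,F_1,F_2$. For each $F_i$ the involution cannot fix $F_i$ pointwise (else it would fix the compositum $E_0\cdot F_i=E$), so its restriction to $F_i$ is the non-trivial element of $\mathrm{Gal}(F_i/F)$, giving $N_{E/E_0}(x)=x\ov x=N_{F_i/F}(x)\in F^\times$ for $x\in F_i^\times$. By local class field theory the norm groups $N_{F_1/F}(F_1^\times)$ and $N_{F_2/F}(F_2^\times)$ are distinct index-two subgroups of $F^\times$ (distinct since $F_1\neq F_2$), so their product is $F^\times$, whence $F^\times\subseteq N_{E/E_0}(E^\times)$. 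Combined with the trivial inclusion $E_0^{\times 2}\subseteq N_{E/E_0}(E^\times)$, this yields $F^\times E_0^{\times 2}\subseteq N_{E/E_0}(E^\times)$. Both are subgroups of index two in $E_0^\times$: the right-hand side by local class field theory for the quadratic extension $E/E_0$, and the left-hand side by the computation $F^\times\cap E_0^{\times 2}=\{1,d\}F^{\times 2}$ (with $E_0=F[\sqrt d]$), combined with $[E_0^\times:E_0^{\times 2}]=4$. Equality then follows.
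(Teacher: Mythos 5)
Your proof is correct, but it takes a genuinely different route from the paper's. For the equivalence, the paper argues via norm groups and ramification: the ``if'' direction quotes class field theory (the biquadratic extension has norm group $F^{\times 2}$), and the ``only if'' direction is a case analysis on whether $E_0/F$ is ramified or unramified, deducing the parity of $\nu_{E_0}(\beta^2)$, hence whether $E/E_0$ is unramified or ramified, and computing $N_{E/E_0}(E^\times)=F^\times E_0^{\times 2}$ and $N_{E/F}(E^\times)=F^{\times 2}$ explicitly in each case; the second assertion of the Lemma drops out of that computation. You instead stay inside Galois theory: the ``if'' direction is the direct identity $N_{E/F}(\beta)=(\beta\tilde\tau(\beta))^2$ with $\beta\tilde\tau(\beta)\in F^\times$, and the ``only if'' direction constructs the extra involution $\tilde\tau(\beta)=(f/\beta^2)\beta$, the choice $c=f/\beta^2$ giving $N_{E_0/F}(c)=1$ and hence $\tilde\tau^2=\mathrm{id}$, so that $\mathrm{Aut}_F(E)$ contains four distinct elements and $E/F$ is Galois of Klein type. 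For the norm-group statement you use the two quadratic subfields $F_1,F_2\ne E_0$, the observation that $N_{E/E_0}$ restricted to $F_i^\times$ is $N_{F_i/F}$, and the fact that two distinct index-two subgroups of $F^\times$ generate $F^\times$, then conclude by comparing indices ($[E_0^\times:N_{E/E_0}(E^\times)]=2$ by class field theory and $[E_0^\times:F^\times E_0^{\times 2}]=2$ since $F^\times\cap E_0^{\times 2}=F^{\times 2}\cup dF^{\times 2}$ and $[E_0^\times:E_0^{\times 2}]=4$ for odd residual characteristic). Your argument is more uniform and conceptually cleaner, avoiding the case-by-case ramification analysis entirely; the paper's computation, on the other hand, produces as a by-product the finer local data (ramification of $E/E_0$, valuation parity of $\beta^2$, the explicit description of $N_{E/E_0}(E^\times)$ in each case) that is convenient for the more precise Remark that follows in the same subsection. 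Both arguments use $p\ne 2$, yours through the index count $[E_0^\times:E_0^{\times 2}]=4$, the paper's through its unit-group and residue-field manipulations.
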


\begin{proof}  A four-dimensional extension of $F$ is called \it biquadratic \rm if it is Galois with Galois group $\mathbb Z/2 \mathbb Z \times \mathbb Z/2 \mathbb Z$. Biquadratic extensions of $F$ are all isomorphic, their  norm subgroup is $F^{\times 2}$(class field theory). The ``if'' part is 
then clear.  Now assume that $N_{E/F}(\beta)$ is a square.  Since $\beta$ is skew and generates a degree four field extension of $F$,    its square $\beta ^2$ generates $E_0$ over $F$ and is not a square in $E_0^\times$, while $ N_{E_0/F}(\beta^2)=N_{E/F}(\beta)$ must be a square in $F^\times$.
We proceed according to ramification.

  If $E_0$ is ramified over $F$: $\beta^2$ must have even valuation, its squareroot generates an unramified extension of $E_0$. So $E/E_0$ is unramified and $N_{E/E_0}(E^\times) $ is made of even valuation elements, i.e. is equal to ${F^\times} {E_0^\times}^2$ since $\mathfrak o_{E_0}^\times = \mathfrak o_{F}^\times (1+\mathfrak p_{E_0})$. It follows that $N_{E/F}(E^\times) =  {F^\times}^2$.

   If   $E_0$ is unramified over $F$, we have in the residual field $k_{E_0}$: 
 
 \centerline{ \it $u \in k_{E_0}^\times$ is a square in $k_{E_0}^\times$ if and only if $N_{k_{E_0}/k_F}(u)$ is a square in $k_{F}^\times$. }

 We write $\beta^2 = \varpi_F^{j}u$ with $u \in \mathfrak o_{E_0}^\times$. Then $ N_{E_0/F}(\beta^2)= \varpi_F^{2j} N_{E_0/F}(u)$. It follows that $u $   must be a square and   $\beta^2$ must have odd valuation:    its square root generates a ramified extension of $E_0$.   Then $E$ is the extension $E_0[\alpha]$ where $\alpha^2 $ is a uniformizing element in $F$,  and $N_{E/E_0}(E^\times) =  (-\alpha^2)^\mathbb Z  \mathfrak o_{E_0}^{\times 2}=  {F^\times} {E_0^\times}^2$ because $k_{F}^\times \subset k_{E_0}^{\times 2}$. It  follows that  $N_{E/F}(E^\times) =  {F^\times}^2$. 
\end{proof}
  
If $N_{E/F}(\beta)$   is not a square, we are done. Assume from now on that  $E$ is biquadratic   and use the form $\d$ defined in 
\ref{skewform}.  
  Since $\beta$ is skew  and  $\d$ is skew-hermitian, the element $\beta  \d(v,v)$ belongs to $E_0$. Since $V$ is one-dimensional over $E$ the form $\d(v,v)$  is anisotropic and the subset $D(V)= \{\beta \d(v,v) \ / \ v \in V, v \ne 0\} $ of $E_0^\times$  is one of the two cosets of  $N_{E/E_0}(E^\times) $ in $E_0^\times$. On the other hand, we have $N_{E/E_0}(E^\times) =  {F^\times} {E_0^\times}^2$ hence the set of non-zero elements in $\text{Ker } \trEOF $ is fully contained in one of those two cosets. Proposition \ref{Uderprop} follows in this case (and Remark \ref{Uderpropdegenerate} directly follows 
from  \ref{flagdegenerate} since $\b$ generates a degree $4$ extension of $F$). 
  
  \medskip
  \bf Remark. \rm We can be more precise about this condition:
if $E_0$ is unramified over $F$ and $|k_F| \equiv 3 \,  [4]$, then $  h(\b  v,v)$ is anisotropic if and only if $\b  \d(v,v) \notin   {F^\times} {E_0^\times}^2$;   
 otherwise $  h(\b  v,v)$ is anisotropic if and only if $\b   \d(v,v) \in   {F^\times} {E_0^\times}^2$.

\subsection{Cases II, III and IV}\label{other}

The case numbered \ref{2plus0} in \S\ref{construction} is obvious. The set of isotropic vectors for the quadratic 
form $h(v, \b  v) $ is  the subspace $V^2$.    The flags that satisfy  \ref{flag}(iii) are the flags that can be 
written in the form 
 $\{0\} \subset Fe_{-2}\subset Fe_{-2} + Fe_{-1} \subset Fe_{-2} +Fe_{-1}+Fe_{1} \subset V$
 where $\{e_{-i}, e_i\}$ is a 
  symplectic basis   of $V^i$ for $i=1,2$.

 Case \ref{2then0} is also quite clear: as in case \ref{case4} the element $\beta  \d(v,v)$ belongs to $E_0=F^\times$ hence 
 $ h(\b v,v)=   2 \b \d(v,v)$  has isotropic vectors if and only if $\d(v,v)$ does. 
 Furthermore a flag $\{0\} \subset V_1 \subset V_2 \subset V_3 \subset V$ as in \ref{flag} 
 must have the form $V_1 = F v$, where $v$ is non zero and isotropic for $\d$, and $V_2 = <v, \b v>$. Since 
 $\b^2$ belongs to $F^\times$ we always have $\b V_2 = V_2$ so, if $\psi_\b $ defines a character of the corresponding unipotent subgroup of $G$, this 
 character is degenerate. 

 \smallskip 
	We finish with case \ref{2plus2}. 
We have $V = V^1 \bot V^2 $ and 
	 $\beta =\b_1+\b_2$. For $v \in V$,  writing $v=v_1+v_2$ on      $V = V^1 \bot V^2 $, we get $h(v, \b v) = h(v_1, \b_1 v_1) + h(v_2, \b_2 v_2) $. The    determinant of this form is  the product $N_{E_1/F}(\b_1) N_{E_2/F}(\b_2) $. For the form to be anisotropic, the determinant must be a square hence 
    $$N_{E_1/F} (\b_1) \equiv  N_{E_2/F} (\b_2)   \text{ mod }  F^{\times2}.$$
    Each $\b_i$ is skew with characteristic polynomial    $X^2 - (- N_{E_i/F} (\b_i) )$:  the class of $- N_{E_i/F} (\b_i)$ mod the squares  determines, up to isomorphism, the extension $E_i$. So if $E_1$ and $E_2$ are not isomorphic we are done.  
    
    We pursue assuming they are and let $E=E_1 \simeq E_2$. We may see $V$ as a vector space over $E$ and define $\d$ as in 
  \ref{skewform}.  The decomposition    $V = V^1 \bot V^2 $ is orthogonal for $\d$ as well, and 
 for $i=1, 2$, $v_i \mapsto  h(v_i, \b_i v_i)$  is an anisotropic  quadratic form on $V^i$: a (non zero) isotropic vector  for 
 $h(v, \b v)$ must  have the form $v=v^1+v^2$ with $v^i \in V^i$, $v^i \ne 0$. We then have: 
 $$ h(v, \b v)=  h(v^1, \b_1 v^1) + h(v^2, \b_2 v^2) = 2 \,  \b_1 \, \d(v^1,v^1) + 2 \,  \b_2 \,  \d(v^2,v^2),$$
 and \ref{Uderprop} follows. 
     
     The remark on the non degeneracy of $\psi_\b$ on $U$ whenever it defines a character follows from the fact that 
     $V^1$ and $V^2$ are anisotropic for $h(v, \b v)$: the corresponding totally isotropic flag has the form (with notations as above)  
  $\{0\} \subset V_1=<v^1+v^2> \subset V_2 = <v^1+v^2,\b_1 v^1+\b_2 v^2 > \subset V_1^\perp \subset V$. We never have $\b V_2  \subseteq  V_2$ unless 
  $ \b_1^2=\b_2^2$ -- but this belongs to case \ref{2then0}, not to case \ref{2plus2}. 
    
\setcounter{section}{3}

\Section{Generic representations}\label{S5}

In section~\ref{S4}, we have found necessary and sufficient conditions
for there to exist a maximal unipotent subgroup $U$ of $G$ on which
$\psi_\beta$ defines a character. When there is such a $U$, this gives
us a candidate for trying to build a Whittaker model, as is the case
in $\GL_N(F)$ (see~\cite{BH}). In this section we consider the
case where there is such a $U$.

\subsection{Characters and $\boldsymbol\b$-extensions}\label{Sgen.1}

\begin{Proposition}[{cf.~\cite[Lemma~2.10]{BH}}] \label{thetaagreeswithpsi}
Let $[\L,n,0,\b]$ be a skew semisimple stratum in
$A$. Let $\th\in\Clob$ be a skew semisimple character and let $U$ be a
maximal unipotent subgroup of $G$ such that $\psi_\b|_{U_{der}}=1$. Then
$$
\th|_{H^1\cap U} = \psi_\b|_{H^1\cap U}.
$$
\end{Proposition}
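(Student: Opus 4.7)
The strategy is modelled on \cite[Lemma 2.10]{BH}, with the added wrinkles that $\th$ is semisimple rather than simple and that we are in a skew (symplectic) setting. The plan is to exploit the flag structure produced by Proposition~\ref{flag} to reduce the claim to the standard agreement of a semisimple character with $\psi_\b$ on a sufficiently deep congruence subgroup.

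First I would translate the hypothesis: by Proposition~\ref{flag}, $\psi_\b|_{U_{\text{der}}}=1$ is equivalent to the existence of a totally isotropic flag $V_1\subset V_2\subset V_3=V_1^\perp\subset V$ with $\b V_i\subset V_{i+1}$ for $i=1,2$, and $U$ is the unipotent radical of the minimal parabolic stabilising this flag. For $u=1+x\in H^1\cap U$, the condition $u\in U$ forces $xV_i\subset V_{i-1}$ (with $V_0=0$), so combined with the flag condition one gets $\b x\,V_i\subset V_i$: the element $\b x$ preserves the flag, hence is nilpotent of index at most $4$, and iterating shows that $(\b x)^k$ lies in deeper and deeper lattices as $k$ grows. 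This nilpotency is the structural input that ultimately forces $\th(1+x)$ to equal $\psi(\tr(\b x))$.

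The backbone of the proof is the standard property recalled in~\S\ref{construction}: for $m=\lfloor n/2\rfloor$, the restrictions of $\th$ and of $\psi_\b$ to $H^1\cap P_{m+1}(\L)$ coincide. So elements of $H^1\cap U$ of level $\ge m+1$ are handled directly. To reach shallower levels, I would argue by descending induction on $k\ge 1$: writing an arbitrary $u\in H^1\cap U\cap P_k(\L)$ as $u=(1+x')(1+y)$ with $1+y\in H^1\cap U\cap P_{k+1}(\L)$ (apply the induction hypothesis to this factor) and $x'$ a carefully chosen representative in $\fa_k(\L)\cap\fa_{k+1}(\L)$-complement adapted to the flag, one reduces to showing $\th(1+x')=\psi_\b(1+x')$ on a set of representatives. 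For such representatives the flag-preservation of $\b x'$ and the defining formula for $\th$ via the semisimple stratum's approximation $[\L,n,k,\b_{(k)}]$ give the required equality, because the contributions from the higher-level correction terms land in the kernel of $\psi$ by the flag/nilpotency constraint.

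The main obstacle will be the semisimple case $\ell>1$ (cases~\eqref{2plus2} and~\eqref{2plus0}), where $\th$ is built from the orthogonal decomposition $V=V^1\perp V^2$ and the associated transfers, while the flag $V_1\subset V_2\subset V_3$ need not respect that decomposition. The resolution should come from Proposition~\ref{Uderprop}: when $\psi_\b|_{U_{\text{der}}}=1$ in a semisimple case, the arithmetic constraints pinned down in~\S\ref{other} force the flag to be compatible with $V^1\perp V^2$ (up to choices that are harmless in $H^1\cap U$), so the factorisation $H^1=(H^1\cap B_1)(H^1\cap B_2)\cdot H^1_{\text{deep}}$ provided by the semisimple character formalism intersects $U$ in a way that lets the simple-case argument run on each block and then combine additively inside $\tr(\b\,\cdot\,)$.
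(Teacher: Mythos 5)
There is a genuine gap here, in fact two. The heart of the paper's proof is not a descending induction on the level $k$ at all: for shallow levels the semisimple character $\th$ is \emph{not} given on $H^1$ by $\psi$ of any approximation ``on a set of representatives'' --- its values on the $B$-part $\U^1(\L)\cap B$ of $H^1=(\U^1(\L)\cap B)H^{[\frac r2]+1}$ come from a different source entirely. What makes the statement true is an intersection property proved separately in the appendix (Lemma~\ref{maxdecomp}, Corollary~\ref{decomp}, Lemma~\ref{ssdecomp}): since $\fa_m(\L)\cap B$ contains no nonzero nilpotent elements (plus Iwahori decompositions in the non-maximal cases), one has $H^1\cap U=H^{[\frac r2]+1}\cap U$, resp.\ $H^1\cap U=\U^\bk(\L)\cap U$, i.e.\ the intersection with $U$ lies entirely in the deep part of $H^1$, where $\th$ is defined via $\psi$. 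Your sketch has no substitute for this step, and the step it replaces it with would fail. Moreover, in the simple cases one cannot stop at the bottom approximation: the induction is on $r=-k_0(\b,\L)$, and it only closes if the approximating simple stratum $[\L,n,r,\g]$ can be chosen with $\psi_\g|_{U_{der}}=1$ \emph{for the same} $U$ (hypothesis ($*$) in the paper), which requires an explicit construction of $\g$ in a symplectic basis built from the flag $v,\b v,\dots$; nothing in your proposal produces such a $\g$. Note also the paper's closing remark that the proof of \cite{BH} Lemma~2.10, on which you model the argument, does not in fact work, so it cannot simply be transplanted.

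Your treatment of the semisimple cases rests on a false premise: in case (\ref{2plus2}) the isotropic flag forced by $\psi_\b|_{U_{der}}=1$ has the form $\la v_1+v_2\ra\subset\la v_1+v_2,\,v_{-1}-v_{-2}\ra\subset\cdots$ with $v_i\in V^i$ both nonzero (see \S\ref{other} and Lemma~\ref{ssdecomp}), precisely because each $V^i$ is anisotropic for $h(v,\b v)$; so the flag does \emph{not} respect the splitting $V=V^1\perp V^2$ and there is no block-by-block reduction. This case needs its own computation (the matrix analysis of Lemma~\ref{ssdecomp}, with the normalizations of \S\ref{SA.2}), together with a further subdivision according to whether $[\L,n,n-1,\b]$ is equivalent to a simple stratum, in which case one again constructs a minimal $\g$ satisfying ($*$). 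Finally, a smaller slip: from $xV_i\subset V_{i-1}$ and $\b V_{i-1}\subset V_i$ you get $\b x V_i\subset V_i$, which makes $\b x$ triangular with respect to the flag, not nilpotent, so the nilpotency you invoke as ``the structural input'' is not available.
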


Unfortunately, we have been unable to find a unified proof of this Proposition; the proof is therefore rather ugly, on a case-by-case basis, and we postpone it to the appendix.

We continue with the notation of Proposition~\ref{thetaagreeswithpsi}. Then we 
can define a character $\Th_\b$ of $\tH^1=(J\cap U)H^1$ by
$$
\Th_\b(uh)=\psi_\b(u)\th(h),\qquad\hbox{for }u\in J\cap U,\ h\in H^1.
$$
Notice that this is a character, since $J$ normalizes $H^1$ and
intertwines $\th$ with itself.

\begin{Corollary}[{cf.~\cite[Lemma~2.11]{BH}}] \label{etacontainspsi}
Let $\eta$ be the unique irreducible
representation of $J^1$ which contains $\th$. Then the restriction of
$\eta$ to $J^1\cap U$ contains the character ${\psi_\b}|_{J^1\cap U}$.
\end{Corollary}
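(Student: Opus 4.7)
The plan is to exhibit a character of an appropriate subgroup of $J^1$ extending $\th$ whose restriction to $J^1 \cap U$ is $\psi_\b|_{J^1\cap U}$, and then invoke the Heisenberg structure of $\eta$ over $\th$ to conclude that this character appears in $\eta$.

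First I would introduce the intermediate group $M = (J^1\cap U)H^1$. Since $H^1$ is normalized by $J$, $M$ is a subgroup of $J^1$, and $M/H^1 \cong (J^1\cap U)/(H^1\cap U)$ sits inside $J^1/H^1$. The proposition just proved tells us $\th|_{H^1\cap U} = \psi_\b|_{H^1\cap U}$, so the character $\Th_\b$ of $\tH^1$ introduced after Proposition~\ref{thetaagreeswithpsi} is well defined; let $\chi$ denote its restriction to $M$. By construction, $\chi|_{H^1}=\th$ and $\chi|_{J^1\cap U} = \psi_\b|_{J^1\cap U}$, so it suffices to prove that $\chi$ appears in $\eta|_M$.

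The key technical step is to check that $M/H^1$ is a totally isotropic subspace of $J^1/H^1$ for the non-degenerate alternating form $\bar h_\th(xH^1,yH^1)=\th([x,y])$ which governs the Heisenberg structure: for $x,y\in J^1\cap U$ the commutator $[x,y]$ lies in $H^1\cap U_{\text{der}}$, and by Proposition~\ref{thetaagreeswithpsi} combined with the standing hypothesis $\psi_\b|_{U_{\text{der}}}=1$ we get $\th([x,y])=\psi_\b([x,y])=1$. Granting this isotropy, the standard properties of $\eta$ (cf.~\cite{S4}, \cite{S5}) imply that every character of $M$ extending $\th$ appears in $\eta|_M$; in particular $\chi$ does, and restricting to $J^1\cap U$ yields the claim.

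The main obstacle is really the input Proposition~\ref{thetaagreeswithpsi}, which the paper itself notes requires a case-by-case treatment; once one knows that $\th$ agrees with $\psi_\b$ on $H^1\cap U$, the current corollary becomes a short and formal application of Heisenberg theory for $(\eta,\th)$, with no further case analysis. The only other minor point to verify is that the formula $\Th_\b(uh)=\psi_\b(u)\th(h)$ indeed defines a character on $M$, which follows from the fact that $J$ (and hence $J^1\cap U$) normalizes $H^1$ and intertwines $\th$, together with the isotropy of $M/H^1$ established above.
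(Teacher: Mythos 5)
Your proposal is correct and follows essentially the same route as the paper: both exploit Proposition~\ref{thetaagreeswithpsi} to get the character $\Th_\b$ (your $\chi$ is its restriction to $M=(J^1\cap U)H^1=\tH^1\cap J^1$), observe that the image of this group in $J^1/H^1$ is totally isotropic for $\bok_\th$, and then use the Heisenberg construction of $\eta$ (extend to a maximal totally isotropic subgroup and induce) to see that $\chi$, hence $\psi_\b|_{J^1\cap U}$, occurs in $\eta$. No substantive difference from the paper's argument.
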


\begin{proof} The proof is essentially identical to that of~\cite[Lemma~2.11]{BH}. We recall that $\bok_\th(x,y)=\th[x,y]$ defines a
nondegenerate alternating form on the finite group
$J^1/H^1$~(\cite[Proposition~3.28]{S4}).
Notice also that the image of $\tH^1\cap J^1$ in $J^1/H^1$ is a totally
isotropic subspace for the form $\bok_\th$, since $\th$ extends
to a character $\Th_\b$ of $\tH^1$. Now we can construct $\eta$ by
first extending $\Th_\b$ to (the inverse image in $J^1$ of) a maximal
totally isotropic subspace of $J^1/H^1$ and then inducing to $J^1$. In
particular, $\eta$ contains $\Th_\b$ and hence ${\psi_\b}|_{J^1\cap U}$.
\end{proof}

Now put $\tJ^1=(J\cap U)J^1=(P(\L)\cap B^\times\cap U)J^1$. Note that, if $J/J^1$ is anisotropic then $J\cap U=J^1\cap U$ so 
$\tJ^1=J^1$.

\begin{Theorem}[{cf.~\cite[Theorem~2.6]{PS}}] \label{kappainduced}
Let $[\L,n,0,\b]$ be a skew semisimple stratum in
$A$ as listed in \S \ref{construction} and let $\kappa$ be a $\b$-extension 
of $\eta$ to $J$ as described there. Assume there is   a
maximal unipotent subgroup $U$ of $G$ such that $\psi_\b|_{U_{der}}=1$ and 
  use the notation above. 
Then:   
$$
\kappa|_{\tJ^1} \simeq \Ind_{\tH^1}^{\tJ^1}\Th_\b.
$$
\end{Theorem}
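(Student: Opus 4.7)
The plan is to establish the isomorphism by producing a non-zero homomorphism via Frobenius reciprocity and then matching dimensions. I would reduce the whole theorem to two subsidiary claims: (a) the image of $(J^1\cap U)H^1$ in $J^1/H^1$ is a Lagrangian subspace with respect to the alternating form $\bok_\th$ of \cite[Proposition~3.28]{S4}; and (b) the restriction $\kappa|_{\tH^1}$ contains the character $\Th_\b$.

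Granted (a), the dimension count is straightforward. Since $H^1\subset J^1$, one has $\tH^1\cap J^1=(J\cap U)H^1\cap J^1=(J^1\cap U)H^1$, hence $\tJ^1/\tH^1\cong J^1/(J^1\cap U)H^1$, and (a) gives
$$
[\tJ^1:\tH^1]=[J^1:(J^1\cap U)H^1]=[J^1:H^1]^{1/2}=\dim\eta=\dim\kappa.
$$
Irreducibility of $\Ind_{\tH^1}^{\tJ^1}\Th_\b$ is then immediate from Mackey applied to the decomposition $\tJ^1=J^1\cdot\tH^1$: its restriction to $J^1$ equals $\Ind_{(J^1\cap U)H^1}^{J^1}\Th_\b$, which is the Heisenberg representation $\eta$ built from the Lagrangian in (a) -- exactly the construction recalled in the proof of Corollary~\ref{etacontainspsi}. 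Combining (b) with Frobenius reciprocity produces a non-zero $\tJ^1$-equivariant map from this irreducible into $\kappa|_{\tJ^1}$, which must then be an isomorphism by the dimension match.

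Claim (a) rests on the fact that the subspace is already totally isotropic -- the existence of the character extension $\Th_\b$ of $\th$ to $\tH^1$ forces this, as used in the proof of Corollary~\ref{etacontainspsi} -- together with the numerical identity $[(J^1\cap U)H^1:H^1]^2=[J^1:H^1]$, which I would verify case by case from the explicit $\oF$-lattice descriptions of $H^1$ and $J^1$ and of the flag stabilised by $U$ in each of (I)--(IV). The main obstacle is claim (b). Corollary~\ref{etacontainspsi} already shows $\eta$, and hence $\kappa$, contains $\Th_\b$ on $\tH^1\cap J^1$; in cases (I) and (III) the quotient $J/J^1$ is anisotropic so $J\cap U=J^1\cap U$, $\tJ^1=J^1$, $\tH^1\subseteq J^1$, and the argument is already complete. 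In cases (II) and (IV), however, $J\cap U/J^1\cap U$ is a non-trivial maximal unipotent subgroup of $J/J^1$, and one must pin down $\kappa(u)$ for $u\in P(\L)\cap B^\times\cap U$ lying outside $J^1$. My strategy would be to exploit the defining property of $\kappa$ vis-\`a-vis the representation $\tilde\eta$ of $(P_1(\L_m)\cap B)J^1$ recalled in Section~\ref{construction}: since $u$ sits inside the centraliser $B$ of $\b$, its action is controlled by $\tilde\eta$, and comparing the two extensions on a common subgroup containing $u$ together with a suitable piece of $H^1$ should force $\kappa(u)$ to act by $\psi_\b(u)$ on a distinguished line. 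This last verification looks genuinely delicate and will presumably require separate arguments for (II) and (IV), tuned to the combinatorics of each stratum.
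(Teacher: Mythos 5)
Your reduction to claims (a) and (b) is sound as a framework, and in cases (\ref{case4}) and (\ref{2plus2}) it essentially reproduces the paper's argument (the paper even sidesteps your lattice computation for (a) there: genericity plus Proposition~\ref{BHprop} give multiplicity one of $\Th_\b$ in $\eta$, which already yields $\Ind_{(J^1\cap U)H^1}^{J^1}\Th_\b\simeq\eta$). The genuine gap is claim (b) in cases (\ref{2then0}) and (\ref{2plus0}): proving that $\kappa$ contains $\Th_\b$ on all of $\tH^1$, i.e.\ controlling $\kappa$ on $P(\L)\cap B^\times\cap U$ outside $J^1$, is precisely the content of the theorem in those cases, and your proposal only gestures at it (``comparing the two extensions \dots should force $\kappa(u)$ to act by $\psi_\b(u)$ on a distinguished line''). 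There is no distinguished line available: $\kappa|_{\tH^1}$ is a large multiplicity-free-in-no-obvious-sense sum of characters of $\tH^1$, and nothing in the defining data singles out a $\Th_\b$-eigenvector on which one could compare $\kappa$ with anything. As written, this step is a restatement of what must be proved, not an argument, and it is exactly where the difficulty of the theorem sits (note that in case (\ref{2plus0}) the assertion amounts to $\kappa$ being \emph{trivial} on $J\cap U\cap B^\times$, which is what later drives non-genericity, so it cannot be cheap).

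The paper closes this gap by never proving (b) directly. It sets $\tilde\eta:=\Ind_{\tH^1}^{\tJ^1}\Th_\b$, proves $\tilde\eta|_{J^1}=\eta$ --- your claim (a), obtained in cases (\ref{2then0}) and (\ref{2plus0}) not by a bare index count but from Iwahori decompositions of $U$, $H^1$, $J^1$ with respect to a parabolic $P_0$ whose Levi $M_0$ conforms to $\L$, quoting \cite{BH0} and \cite{S5} --- and then, after identifying $\tJ^1=(J\cap U)J^1$ with $(P_1(\L_m)\cap B)J^1$ for a suitable $\L_m$ compatible with $(M_0,P_0)$, verifies
$$
\Ind_{\tJ^1}^{P_1(\L_m)}\tilde\eta \;\simeq\; \Ind_{J_m^1}^{P_1(\L_m)}\eta_m .
$$
Since by construction the $\b$-extension $\kappa$ restricts on $\tJ^1$ to the \emph{unique} extension of $\eta$ with this induction property, it follows that $\kappa|_{\tJ^1}=\tilde\eta$, and your (b) drops out as a corollary rather than an input. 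If you wish to keep your Frobenius-reciprocity-plus-dimension-count scheme, you must still supply this comparison of inductions to $P_1(\L_m)$ (the real technical input, requiring the Iwahori decomposition of $P_1(\L_m)$ as well) or some other independent handle on $\kappa$ off $J^1$; the case-by-case lattice computations you propose for (a) will not provide it.
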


In particular, we deduce that the restriction of $\kappa$ to $J\cap U$
contains the character $\psi_\b|_{J\cap U}$ ({cf.}~\cite[Lemma~2.12]{BH}). Moreover, except in case \ref{2plus0}, and case \ref{2then0} when $J/J^1$ is isotropic, this means that the restriction of the simple type $\l$ to $J\cap U$ contains $\psi_\b|_{J\cap U}$, since $\l=\k$.

\begin{proof} The proof is in essence the same as that of~\cite[Theorem~2.6]{PS}, which it may be useful to read first: because of the similarities, we do not give all the details here.

We begin by proving
\begin{equation}
\Ind_{(J^1\cap U)H^1}^{J^1}{\Th_\b}\simeq \eta. \label{etainduced}
\end{equation}
We prove this first in cases \ref{case4} and \ref{2plus2}. Here 
$E^1= \{x \in E / x \bar x = 1\}$ is a maximal torus of $G$. Then $J=E^1J^1$ and $\pi=\cInd_J^G\k$.
Since $J\cap U=J^1\cap U$, Corollary~\ref{etacontainspsi} implies that
$\kappa$ contains $\psi_\b|_{J\cap U}$. Hence $\pi$ contains $\psi_\b$ and, since
$\psi_\b$ is then a nondegenerate character, $\kappa$ contains
$\psi_\b|_{J\cap U}$ with multiplicity one (Proposition \ref{BHprop}). Hence $\Th_\b|_{(J^1\cap U)H^1}$ occurs in
$\eta$ with multiplicity precisely one and \eqref{etainduced} follows (see~\cite[Lemma~2.5]{PS}).
Note that this already gives the Theorem cases \ref{case4} and \ref{2plus2}, since $E^1$ is maximal and $\tJ^1=J^1$ in these cases.

Now we consider the other cases \ref{2then0} and \ref{2plus0}. 
Recall that $U$ is given by a flag $\{0\}\subset V_1\subset 
V_2\subset V_3\subset V$ (see Proposition~\ref{flag}), described in \S \ref{other}. 
What we need here is to 
 define a parabolic subgroup $P_0 $ of $G$, with unipotent radical $U_0$ contained in $U$ and with 
 a specific Levi factor $M_0$ conforming to $\L$ in the sense of \cite[\S 10]{BH0}. 
 We achieve this 
 according to the case as follows. 
\begin{enumerate}
\item[(\ref{2then0})] 
From \S \ref{other}, the unipotent subgroup  $U$ is attached to a flag  
$$\{0\} \subset Fw_{-1}\subset Fw_{-1} + F\b w_{-1} \subset Fw_{-1} + F\b w_{-1} +F\b w_{1} \subset V$$ 
where $\{w_{-1},w_1\}$ is a Witt basis    for $V$ over $E$.  
 We then let $M_0$ be the stabilizer of the decomposition $V = E w_{-1} \oplus E w_1$ and 
$P_0$ be the stabilizer of the flag 
 $\{0\}\subset E w_{-1}\subset V$.

\item[(\ref{2plus0})] Here $U$ is attached to a flag 
 $\{0\} \subset Fe_{-2}\subset Fe_{-2} + Fe_{-1} \subset Fe_{-2} + V^1 \subset V$.  
We can pick $e_2$ so that 
  $\{e_{-2}, e_2\}$  is a symplectic basis 
    of $V^2$  adapted to $\L^2$. We then let $M_0$ be the stabilizer of the decomposition $V = Fe_{-2} \oplus V^1 \oplus Fe_{2} $ and 
$P_0$ be the stabilizer of the flag 
 $\{0\}\subset Fe_{-2}  \subset Fe_{-2} \oplus V^1 \subset V$.
\end{enumerate}

Let $\bs k_\th$ be the form defined  in the proof of Corollary \ref{etacontainspsi}. 
In each case, we have the following properties (see {\cite[\S 10]{BH0}} and \cite[Lemma 5.6, Corollary 5.10]{S5}):
\begin{enumerate}
\item $U$, $H^1$ and $J^1$ have Iwahori decompositions with respect to
  $(M_0,P_0)$; 
\item $J^1\cap U_0/H^1\cap U_0$ is a totally isotropic subspace of
  $J^1/H^1$ with respect to the form $\bs k_\th$;
\item $J^1\cap M_0\cap U/H^1\cap M_0\cap U$ is a maximal totally isotropic subspace of $J^1\cap M_0/H^1\cap M_0$;
\item there is an orthogonal sum decomposition
$$
\frac{J^1}{H^1}\ =\ \frac{J^1\cap M_0}{H^1\cap M_0} \perp 
\left( \frac{J^1\cap U_0}{H^1\cap U_0} \times 
\frac{J^1\cap U_0^-}{H^1\cap U_0^-} \right),
$$
where $U_0^-$ is the unipotent subgroup opposite to $U_0$ relative to $M_0$.
\end{enumerate}
Then $(J^1\cap U)H^1$ has an Iwahori decomposition with respect
to $(M_0,P_0)$ and 
$(J^1\cap U)H^1/H^1 \simeq J^1\cap M_0\cap U/H^1\cap M_0\cap U 
\perp J^1\cap
U_0/H^1\cap U_0$ is a maximal totally isotropic subspace of $J^1/
H^1$. In particular, from the construction of Heisenberg extensions, equation~\eqref{etainduced} follows.

For the final stage, as in the construction of $\b$-extensions, there is an $\oE$-lattice sequence $\L_m$ such that 
$\tJ^1=(P^1(\L_m)\cap B)J^1$, and $P^1(\L^m)$ 
still has an Iwahori decomposition with respect to $(M_0,P_0)$. 
Defining $\tilde\eta$ to be $\Ind_{\tH^1}^{\tJ^1}\Th_\b$, we 
see that $\tilde\eta|J^1=\eta$, and one checks that
$$
\Ind_{\tJ^1}^{P^1(\L_m)}\tilde\eta \simeq \Ind_{J_m^1}^{P^1(\L_m)}\eta_m.
$$
Since this uniquely determines $\tilde\eta$, from the definition 
of $\b$-extension we get that $\k|_{\tJ^1}=\tilde\eta$, and the 
result follows.
\end{proof}

\subsection{The positive level generic supercuspidal representations of ${\bf
Sp}_{\bs 4}\bs(\bs F\bs)$}

Here we show that all the positive level supercuspidal representations which are not in the list of Theorem~\ref{nongenericscs} are indeed generic.

\begin{Theorem}\label{genericscs}
Let $\pi=\cInd_J^G\l$ be a positive level irreducible supercuspidal
representation of $G=\Sp_4(F)$ with underlying skew semisimple stratum
$[\L,n,0,\b]$. Suppose that this stratum is not in case \ref{2plus0} and
that there exists a maximal unipotent subgroup 
$U$ of $G$ such that $\psi_\b|_{U_{der}}=1$. Then $\pi$ is generic. 
\end{Theorem}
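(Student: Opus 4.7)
By Proposition~\ref{BHprop}, to show $\pi$ is generic it suffices to exhibit a character $\chi$ of $U$ whose restriction to $J\cap U$ is contained in $\l|_{J\cap U}$; Lemma~\ref{nondegenerate} then forces such a $\chi$ to be nondegenerate. The starting point is Theorem~\ref{kappainduced}, which under our hypothesis yields $\k|_{J\cap U}\supseteq\psi_\b|_{J\cap U}$, and hence, since $\l=\k\otimes\s$,
$$
\l|_{J\cap U}\ \supseteq\ \psi_\b|_{J\cap U}\otimes\s|_{J\cap U}.
$$
The plan is to extract from the right-hand side a character that extends to a nondegenerate character of $U$.

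Cases~\ref{case4} and~\ref{2plus2} are essentially immediate. Here $J/J^1$ is an anisotropic abelian $k_F$-group ($k_E^1$ and $k_1^1\times k_2^1$ respectively), containing no non-trivial unipotent elements; so the image of $J\cap U$ in $J/J^1$ must be trivial, giving $J\cap U=J^1\cap U$. Consequently $\s|_{J\cap U}$ is scalar and $\l|_{J\cap U}$ contains $\psi_\b|_{J\cap U}$. Remark~\ref{Uderpropdegenerate} guarantees that in these two cases $\psi_\b|_U$ is already a nondegenerate character of $U$, and we are done with $\chi:=\psi_\b|_U$.

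The substantive case is~\ref{2then0}. Remark~\ref{Uderpropdegenerate} now warns us that $\psi_\b|_U$ is \emph{degenerate}, trivial on a specific simple-root subgroup $U_\gamma\subset U$, so $\psi_\b$ alone is insufficient; the missing character must be provided by $\s$. Proposition~\ref{Uderprop}(2) combined with the hypothesis rules out $J/J^1=O_2(k_F)$, leaving $J/J^1\in\{U(1,1)(k_E/k_F),\,k_E^1\times k_E^1,\,SL_2(k_F)\}$. For $J/J^1\in\{U(1,1)(k_E/k_F),\,SL_2(k_F)\}$, the Iwahori decomposition of $J$ with respect to the pair $(M_0,P_0)$ constructed in the proof of Theorem~\ref{kappainduced} identifies $(J\cap U)/(J^1\cap U)$ with a maximal unipotent subgroup $\bar U$ of $J/J^1$; cuspidality of $\s$ then forces $\s|_{\bar U}$ to contain a non-trivial character $\chi_\s$, and one selects $\chi_\s$ so that its pull-back is the restriction to $J\cap U$ of a character of $U_\gamma$. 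The product $\psi_\b\cdot\chi_\s$ is then the restriction of a nondegenerate character of $U$, occurring in $\l|_{J\cap U}$. The torus subcase $J/J^1=k_E^1\times k_E^1$, where cuspidality of $\s$ is vacuous, must be handled differently, either by conjugating $U$ by a Weyl element (making a different flag $\psi_\b$-compatible) or by exploiting extra structure in $\k|_{\tJ^1}$ from Theorem~\ref{kappainduced}.

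The main obstacle is the root-matching in case~\ref{2then0}: verifying that the root subgroup of $\bar U$ selected by $\chi_\s$ really is the image of the ``missing'' simple-root subgroup $U_\gamma$ of $U$. This requires an explicit comparison between the root datum of the twisted finite reductive quotient (depending on the ramification of $E/F$ and on the type of maximal self-dual order $\fa_0(\L)\cap B$) and the root system of $\Sp_4$ in the symplectic basis given by the flag of Proposition~\ref{flag}, and must be performed case by case, in the spirit of the computations of \S\ref{quadratic}.
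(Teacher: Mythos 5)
Your cases (I) and (III) are handled exactly as in the paper and are fine, but in case \ref{2then0} what you postpone as ``the main obstacle'' is not a routine verification to be farmed out: it is the whole content of the paper's argument, and without it the proof is not there. The paper chooses a Witt basis $\{w_{-1},w_1\}$ of $V$ over $E$ attached to the flag defining $U$ and computes $U\cap B^\times$ in the symplectic basis $\{w_{-1},\,\b w_{-1},\,\tfrac{\b}{N(\b)}\tfrac{w_1}{2},\,\tfrac{w_1}{2}\}$: it is the group of matrices $u(x)$ whose only nonzero off-diagonal entries are $x$ in the $(2,3)$ place and $N(\b)x$ in the $(1,4)$ place. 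From this explicit shape it is immediate that \emph{every} character of $J\cap U\cap B^\times$ has the form $u(x)\mapsto\psi_F(\a x)$ and therefore extends to a character $\chi$ of $U$ (a character of $U$ only sees the simple-root entries, and $U\cap B^\times$ sits along the long simple-root coordinate, precisely the direction on which $\psi_\b$ degenerates); hence $\l|_{J\cap U}=\k|_{J\cap U}\otimes\s|_{J\cap U}$ contains $(\chi\psi_\b)|_{J\cap U}$ and Proposition~\ref{BHprop} concludes. This is exactly the step you acknowledge you have not done, so there is a genuine gap. Two further inaccuracies in your framing: you do not get to ``select'' $\chi_\s$, nor do you need it non-trivial or matched to the missing root -- any character occurring in $\s|_{J\cap U\cap B^\times}$ works once the computation above is in place -- and non-degeneracy of $\chi\psi_\b$ is not something to arrange but an automatic consequence of Proposition~\ref{BHprop} together with Lemma~\ref{nondegenerate}.

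The subcase $J/J^1\cong k_E^1\times k_E^1$, which you leave to unspecified alternatives (Weyl conjugation, extra structure of $\k$), needs no new idea at all: an anisotropic quotient contains no non-trivial unipotent elements, so $J\cap U=J^1\cap U$, $\s$ is trivial there, and the same one-line argument you already used for cases (I) and (III) gives $\l|_{J\cap U}\supseteq\psi_\b|_{J\cap U}$ and hence genericity by Proposition~\ref{BHprop} -- this is precisely how the paper's first paragraph disposes of every situation except case \ref{2then0} with $J/J^1$ isotropic, which it then identifies as $SL_2(k_F)$ ($E/F$ ramified) or $U(1,1)(k_E/k_F)$ ($E/F$ unramified) and treats by the matrix computation above. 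The point you appear to have tripped on is that Proposition~\ref{BHprop} does not require the candidate character of $U$ to be nondegenerate as an input; nondegeneracy is part of the conclusion. Relatedly, your assertion that the hypothesis ``rules out $O_2(k_F)$ but not $k_E^1\times k_E^1$'' is unjustified as stated: the operative dichotomy in the paper is simply whether $J/J^1$ is isotropic or anisotropic, the anisotropic quotients being covered by the easy argument, so no case-by-case comparison of root data of the finite quotient with the root system of $\Sp_4$ is ever needed beyond the single explicit basis computation described above.
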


\begin{proof} Except in case \ref{2then0} when $J/J^1$ is isotropic, this is immediate from Theorem~\ref{kappainduced}, since $\l|_{\tJ^1}=\k|_{\tJ^1}$ in these cases so $\l|_{J\cap U}$ contains the character $\psi_\b|_{J\cap U}$; hence, by Proposition~\ref{BHprop}, $\pi$ is generic.

Suppose now we are in case \ref{2then0} and choose a Witt basis $\{w_{-1},w_1\}$  for $V$ over $E$  attached to the unipotent subgroup $U$ as in the proof of 
Theorem~\ref{kappainduced}. 
The quotient $J/J^1 \simeq P(\L) \cap B^\times / P_1(\L) \cap B^\times$ is then isomorphic to $SL(2,k_F)$ if $E/F$ is ramified, 
to  $U(1,1)(k_E/k_F)$ if $E/F$ is unramified. 

  We have $ U\cap B^\times= \{ \begin{pmatrix} 1& x\\
0&1\end{pmatrix} / x \in F \} $ with respect to the $E$-basis $\{w_{-1},w_1\}$  and 
$$ U\cap B^\times= \{ u(x) =  \begin{pmatrix} 1&0&0&N(\b) x\\ 0&1&x & 0 \\ 0&0&1&0 \\ 0&0&0&1 
 \end{pmatrix} / x \in F \} $$ in the symplectic basis 
$\{w_{-1},\b w_{-1} , \dfrac {\b} {N(\beta)} \dfrac {w_1} {2} , \dfrac {w_1} {2}\}$ of $V$ over $F$.

Now recall that $\l=\k\otimes\s$, for $\s$ some irreducible cuspidal
representation of $J/J^1$.  Note that all
cuspidal representations $\s$ of $U(1,1)(k_E/k_F)$ or $SL(2,k_F)$ 
 are generic, since the maximal unipotent subgroup is abelian and $\s$ cannot contain the trivial character, by Lemma~\ref{nondegenerate}. 
Here we only need the fact that 
  (the inflation of) $\sigma$ restricted to $J \cap U \cap B^\times$ contains some character of $J \cap U \cap B^\times$, which must have the form 
 $  u(x) \mapsto \psi_F(\a x)$ for some $\a$ in $F$. Since $\chi : \begin{pmatrix} 1&a&b&c\\ 0&1&x & y \\ 0&0&1&z \\ 0&0&0&1 
 \end{pmatrix} \mapsto  \psi_F(\a x) $ is a character of $U$, we see that $\sigma|{J \cap U} $ contains $\chi|{J \cap U} $. 
Since $\k|_{J\cap U}$ contains $\psi_\b$ (Theorem~\ref{kappainduced}), we deduce that
$\l|_{J\cap U}$ contains the restriction to $J \cap U$ of the character $\chi \psi_\b$ of $U$ and, by
Proposition~\ref{BHprop}, $\pi=\cInd_J^G\l$ is generic (and the character $\chi \psi_\b$ is non-degenerate).
\end{proof}

\subsection{Case \ref{2plus0}}

In case \ref{2plus0}, any maximal unipotent subgroup $U$ of $G$ on which  $\psi_\b$ defines a character 
can be written as upper triangular unipotent matrices in a symplectic basis $\{e_{-2},e_{-1}, e_1,  e_2\}$ as in the beginning of \S \ref{other}. 
Here $\{e_{-2}, e_2\}$ is a 
  symplectic basis   of $V^2$  hence $ U \cap B^\times$ is made of matrices of the form 
  $\left(\begin{smallmatrix} 1&0&0&   x\\ 0&1&0 & 0 \\ 0&0&1&0 \\ 0&0&0&1 
 \end{smallmatrix}\right)$. Even though $\sigma$, as a cuspidal hence generic representation of $SL(2,k_F)$, does contain a non trivial character of $J \cap U \cap B^\times$, this character   is by no means the restriction of a character of $U$. This is an heuristic explanation of the fact that   none of the case \ref{2plus0} supercuspidal representations are generic, 
 unfortunately not a proof.   In this section, we prove a crucial result towards non-genericity.  The last step will be taken in section  \ref{S6.degenerate}. 

\begin{Proposition}\label{2plus0nongeneric}
Let $\pi=\cInd_J^G\l$ be a positive level supercuspidal representation from case \ref{2plus0} and let $U$ be a
maximal unipotent subgroup of $G$ such that $\psi_\b|_{U_{der}}=1$.   
 Then the restriction of $\l$ to $J \cap U \cap B^\times$ is a sum of non-trivial characters.  
\end{Proposition}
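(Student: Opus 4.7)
The plan is to compute $\l|_H$ by handling the two tensor factors of $\l=\k\otimes\s$ separately. From the description preceding the statement, the group $H := J\cap U\cap B^\times$ is abelian, consisting of the matrices $u(x)$, $x\in\oF$, that act trivially on $V^1$ and as $\begin{pmatrix}1&x\\0&1\end{pmatrix}$ on $V^2$; its image in $J/J^1\cong k_1^1\times\SL_2(k_F)$ is the upper triangular unipotent subgroup $\{1\}\times U_{\SL_2}$ of the second factor, isomorphic to $k_F$.

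For $\s|_H$: as an irreducible cuspidal representation of the product $k_1^1\times\SL_2(k_F)$, $\s$ factorises as $\s_1\boxtimes\s_2$ with $\s_2$ cuspidal on $\SL_2(k_F)$. Hence $\s|_H$ is the inflation of $\s_2|_{U_{\SL_2}}$, and Lemma~\ref{nondegenerate} applied in the finite group $\SL_2(k_F)$ shows that this restriction consists entirely of non-trivial characters of $U_{\SL_2}\cong k_F$. Thus $\s|_H$ is a direct sum of non-trivial characters of $H$, all trivial on $H\cap J^1$.

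For $\k|_H$: we claim it is a multiple of the trivial character, which combined with the previous step gives $\l|_H=\k|_H\otimes\s|_H=(\dim\k_1)\cdot\s|_H$, a direct sum of non-trivial characters, and proves the proposition. To prove the claim, exploit the splitting $V=V^1\perp V^2$: the centralizer factors as $B^\times=B_1^\times\times B_2^\times$, so $J_B:=J\cap B^\times=J_{B_1}\times J_{B_2}$ with $H\subset J_{B_2}=P(\L^2)\cap G$. Because the $V^2$-stratum is null, the semisimple character $\th$ is trivial on the $V^2$-component $H^1_2=J^1_{B_2}$, so the Heisenberg representation $\eta_2$ attached to the $V^2$-side is trivial. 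The construction of $\b$-extensions for decomposable strata in~\cite{S5} then yields a factorisation $\k|_{J_B}=\k_1\boxtimes\k_2$ with $\k_2$ a $\b_2$-extension of $\eta_2=\boI$ to $J_{B_2}$; hence $\k_2$ is inflated from a character of $J_{B_2}/J^1_{B_2}\cong\SL_2(k_F)$, and since $\SL_2(k_F)$ is perfect, $\k_2=\boI$. Therefore $H\subset J_{B_2}\subset\ker\k$ and $\k|_H=(\dim\k_1)\cdot\boI_H$.

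The main obstacle is the product factorisation $\k|_{J_B}=\k_1\boxtimes\k_2$ in case~(\ref{2plus0}), which rests on the detailed construction of $\b$-extensions for split skew semisimple strata with a null component in~\cite{S5}; once that is in place, the rest of the argument is routine.
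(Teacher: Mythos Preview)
Your overall strategy matches the paper's: split $\l=\k\otimes\s$, show $\s|_H$ is a sum of non-trivial characters and $\k|_H$ is trivial. Your treatment of $\s$ is essentially the same as the paper's (cuspidality of $\s_2$ on $\SL_2(k_F)$ plus Lemma~\ref{nondegenerate}).

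The gap is in your argument for $\k|_H=\boI$. You invoke a block factorisation $\k|_{J_B}=\k_1\boxtimes\k_2$ and describe it as resting on ``the detailed construction of $\b$-extensions for split skew semisimple strata with a null component in~\cite{S5}''. But this factorisation is not established in the paper, nor is it an obvious consequence of the cited construction: $\k$ is built as an extension of $\eta$ on all of $J$, not block-by-block, and the compatibility condition defining a $\b$-extension (agreement of induced representations on $P_1(\L_m)$) does not immediately decompose along $V^1\perp V^2$. You flag this yourself as ``the main obstacle'', which is an honest acknowledgement that the proof is incomplete at this point.

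The paper avoids this entirely by using Theorem~\ref{kappainduced}, already proved in the preceding subsection: $\k|_{\tJ^1}\simeq\Ind_{\tH^1}^{\tJ^1}\Th_\b$. Since $\b_2=0$, the function $\psi_\b$ is trivial on $U^2=U\cap\Sp(V^2)$, so $\Th_\b$ is trivial on $J\cap U^2$; one then checks directly (via Frobenius reciprocity and irreducibility) that $\Ind_{\tH^1}^{\tJ^1}\Th_\b\simeq 1_{J\cap U^2}\otimes\eta$, which gives $\k|_{J\cap U^2}$ trivial immediately. This route uses only what is already in hand and requires no new structural input about $\b$-extensions.
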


\begin{proof}
We retain all the notation of section~\ref{Sgen.1} and write $U$ as in the beginning of this section. 
We also write  $ U^2 = U \cap B^\times = U \cap \ \Sp(V^2)$. 
Since $J = (J \cap P(\L) \cap B^\times) J^1$ we have $J \cap U = (J \cap U^2) (J^1 \cap U)$ hence: 
$ \tJ^1 = (J \cap U^2) J ^1$ and   $ \tH^1 = (J \cap U^2) (J ^1 \cap U) H^1$. We claim that
\begin{enumerate}
	\item $J^1 \cap U^2 = H^1 \cap U^2$; 
	\item $\Ind_{\tH^1}^{\tJ^1}\Th_\b \simeq 1_{J \cap U^2} \otimes \eta$. 
\end{enumerate}
The first claim comes from the definitions in \cite{S3} on p.131. Indeed $J^1 \cap \  \Sp(V^2) =  H^1 \cap \ \Sp(V^2) = P_1(\L^2)$. 
For the second, we note first that $1_{J \cap U^2} \otimes \eta$ does define a representation of $\tJ^1$ since 
$J \cap U^2$ normalizes $(J^1, \eta)$ and $\eta|{J^1 \cap U^2}= \eta|{H^1 \cap U^2}$ is a multiple of $ \psi_\b $, trivial on $U^2$
($\b_2=0$). Frobenius reciprocity gives a non-zero intertwining operator between those representations, which are irreducible 
(recall from \S \ref{Sgen.1} that $\Ind_{\tH^1}^{\tJ^1}\Th_\b|{J^1}  =\eta$).

We have $\l=\k\otimes\s$, with $\s$ 
an irreducible cuspidal representation of $J/J^1\cong SL_2(k_F)$. 
In particular, $\s$ is generic and its restriction to 
$\tJ^1/J^1$ is a sum of nondegenerate characters $\chi$. On the other hand, 
the second claim above, plus Theorem \ref{kappainduced}, tell us that 
$\kappa $ is trivial on $J \cap U^2$, q.e.d. 
 \end{proof}

\setcounter{section}{4}

\Section{Non-generic representations}\label{S6}

Our goal in this section is to show that the positive level supercuspidal 
representations which are  in the list of Theorem~\ref{nongenericscs} are 
indeed non generic.
For cases (\ref{case4}), (\ref{2then0}) and (\ref{2plus2}) 
we will prove that  whenever there is no maximal unipotent subgroup 
on which the function $\psi_\b$ is a character (see Proposition \ref{Uderprop}), 
an irreducible supercuspidal
representation of $G=\Sp_4(F)$ with underlying skew semisimple stratum
$[\L,n,0,\b]$ is not generic. The main tool here is the criterion given in 
Proposition \ref{criterion}, hence we will work out in some detail the restriction of $\psi_\b$ 
to one-parameter subgroups.  This technique will also provide us with a last piece of argument 
to settle non-genericity in case (\ref{2plus0}).
  
\subsection{The function $\bs\psi_{\bs\b}$ on some one-parameter subgroups}
\label{S6.psi}

In a given symplectic basis  $(e_{-2},   e_{-1}, e_1 ,e_2)$ of $V$, we will 
denote by $U_k$, for $k \in \{ -2, -1, 1, 2\}$, the following root subgroup : 
$$
U_k = \{ 1  + x \, t_k \  ; \   x \in F \} \ \text{ with } \ t_k (e_j) = 
\begin{cases}
e_{-k} &\text{ if }j=k; \\
0 & \text{ otherwise}. 
\end{cases}
$$
It is attached to a long root and has a filtration indexed by  $s \in \BZ$:
$U_k (s)  = \{ 1  + x \, t_k \  : \   x \in \pF^s \}$.

Let $\b$ be an element of $A$  and let $\psi_\b$ be the function on $G$ defined by 
 $\psi_\b (x) = \psi( \tr \b (x-1))$, $x \in G$. 
 For  $k $ in $ \{ -2, -1, 1, 2\}$, let $\epsilon(k)$ be the sign of $k$. 
 
 \begin{Lemma} \label{Lemma6.2.1}
 Fix $g \in G$ and $k $ in $ \{ -2, -1, 1, 2\}$. Let $s \in \BZ$. 
\begin{enumerate}
\item For any $x$ in $F$ we have: $ \ 
\psi_\b \,(1+x \, ^gt_k   ) = \psi\, 
 ( \epsilon(k)\, x \,h(ge_{-k}, \b ge_{-k}))$.
\item The character $ \psi_\b$ of $ ^g U_k  $ is
non trivial on $^g U_k(s)  $ 
if and only if $\ s \le - \vF h(ge_{-k}, \b  ge_{-k})$. \rm
\end{enumerate}
\end{Lemma}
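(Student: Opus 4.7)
The plan is a direct computation using the matrix form of $t_k$ and the symplectic basis conventions. For part (i), starting from
\[
\psi_\b(1 + x\,{}^g t_k) \;=\; \psi_F\bigl(x\,\tr(\b\, g t_k g^{-1})\bigr) \;=\; \psi_F\bigl(x\,\tr(g^{-1}\b g\cdot t_k)\bigr)
\]
by cyclicity of the trace, I would use that $t_k$ has matrix $(t_k)_{ij} = \d_{i,-k}\d_{j,k}$ in the symplectic basis, so that $\tr(X\, t_k) = X_{k,-k}$ for any $X \in A$. This reduces (i) to re-expressing the matrix entry $(g^{-1}\b g)_{k,-k}$ as a value of the form $h$.

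To do so, writing out the defining relations $h(e_{-i},e_j) = \d_{ij}$ and $h(e_i,e_j) = h(e_{-i},e_{-j}) = 0$, one obtains the identity $h(e_{-k}, v) = \e(k)\, v_k$ for every $v = \sum v_j e_j$ in $V$; the sign $\e(k)$ records the asymmetry $h(e_{-k}, e_k) = 1$ versus $h(e_k, e_{-k}) = -1$. Applied with $v = X e_{-k}$ this reads $X_{k,-k} = \e(k)\, h(e_{-k}, X e_{-k})$. Setting $X = g^{-1}\b g$ and invoking the symplectic invariance $h(e_{-k}, g^{-1}\b g\, e_{-k}) = h(g e_{-k}, \b g e_{-k})$ (valid because $g \in G$) then completes the proof of (i).

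Part (ii) is an immediate consequence. Set $\alpha = h(g e_{-k}, \b g e_{-k})$; by (i) the restriction of $\psi_\b$ to ${}^g U_k(s)$ is the map $x \mapsto \psi_F(\e(k)\, x\, \alpha)$ on $x \in \pF^s$, whose image is $\psi_F(\pF^{s+\vF(\alpha)})$ (with the usual convention $\pF^{\infty} = (0)$ when $\alpha = 0$). Since $\psi_F$ has conductor $\pF$ this is non-trivial exactly when $s + \vF(\alpha) \le 0$, which rearranges to the stated bound. There is no conceptual obstacle in the argument; the only bookkeeping worth care is the sign $\e(k)$ forced by the symplectic basis conventions, which is exactly the $\e(k)$ appearing in the statement.
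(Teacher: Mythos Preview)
Your proof is correct and essentially identical to the paper's own argument: both use cyclicity of the trace to reduce to the entry $(\b^g)_{k,-k}$, then express this entry as $\e(k)\,h(e_{-k},\b^g e_{-k})$ via the symplectic basis relations, and finally invoke the $G$-invariance of $h$. Part (ii) is handled the same way, as an immediate consequence of (i) and the fact that $\psi_F$ has conductor $\pF$.
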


\begin{proof} We keep the usual notation $^g u = gug^{-1}$, 
$u^g = g^{-1} u g$.   We have
  for  $x \in F $:
$$ 
\psi_\b  \,(1+x \,^gt_k   ) = \psi\left( \tr (\b  x \,^g t_k
 ) \right)
=\psi\left( \tr ( x  \b^g \,t_k ) \right)
= \psi ( x  (\b^g)_{k, -k})
$$
while the $(k, -k)$ entry of $ \b^g $ is 
$  
( \b^g)_{k, -k} = \epsilon(k) h( e_{-k},  \b^g e_{-k}) 
= \epsilon(k) h( g e_{-k},  \b   g e_{-k})  
$.
\end{proof}

\subsection{Proof for a  maximal simple stratum} \label{S6.maxsimple}

We work in this paragraph with a maximal simple stratum 
$[\L, n, 0 , \b]$  in $\End_F V$, satisfying the assumption  in  
Theorem  \ref{nongenericscs}  namely, in terms of the form $\d$ (see (\ref{skewform}) and \S \ref{biquad}):
\begin{enumerate}
\item {\it $E=F[\b]$ is a biquadratic extension of $F$   with fixed
points $E_0$ under $x\mapsto \bar x$; in particular
$N_{E/E_0}(E^\times)= F^\times E_0^{\times 2}$. } 
\item {\it  The subset  
$ D(V)= \{\b \d(v,v) \ : \ v \in V, v \ne 0\} $ of $E_0^\times$ 
is the  $ N_{E/E_0}(E^\times)$-coset in $ E_0^\times $
that does not contain the kernel of $ \tr_{E_0/F} $.}
\end{enumerate}

Note that lattice duality with respect to the form $\d$,   
 defined by $L^\# = \{ v\in V : \d(v, L) \subseteq \pE\} $ 
for $L$ an $\mathfrak o_E$-lattice in $V$,  coincides with lattice
duality with respect to 
$h$. We may and do  assume that  the self-dual lattice chain $\L$ satisfies  
$\L(i)^\# = \L(d-i)$, $i \in \BZ$, with $d=0$ or $1$. We also may and do assume that $\L$ 
is strict, i.e. has period $2$.

\begin{Lemma} \label{Lemma6.3.1}
Pick $v_0$ in $V$ such that 
$\L(i) = \pE^i v_0$ for $i \in \BZ$. We have 
$\vE \d(v_0,v_0)= 1-d$ and, 
for $v \in V$ and $s$ in $\BZ$: 
$$
\aligned
&\vF h(v, \b v) = \frac 1{e(E_0/F)} \nu_{E_0} (\b \d(v,v));
\\
&\vF h(v, \b v) = s  \iff v\in \L( s+ 
 \tfrac 1 2 (n+d-1) ) -
\L( s+  \tfrac 1 2 (n+d+1) ).
\endaligned
$$ 
\end{Lemma}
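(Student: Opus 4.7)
The plan is to use one-dimensionality of $V$ over $E$: every vector
is $v=xv_0$ with $x\in E$, and $\d(v,v)=N_{E/E_0}(x)\,\d(v_0,v_0)$.
Skewness of $\b$ and $\d(v_0,v_0)$ puts $y:=\b\,\d(v,v)$ in $E_0$, and
from $h(v,w)=\tr_{E/F}\d(v,w)$ and $\bar\b=-\b$ we get
$h(v,\b v)=-2\,\trEOF(y)$; since $p$ is odd, this reduces the lemma
to statements about $\trEOF(y)$.

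The first identity $\vE\d(v_0,v_0)=1-d$ follows by reading the
hypothesis $\L(0)^\#=\L(d)$ through $\d$-duality, which coincides with
$h$-duality on $\oE$-lattices, as noted just before the statement:
$xv_0\in\L(0)^\#$ iff $x\,\d(v_0,v_0)\in\pE$, i.e.\
$\vE(x)\ge 1-\vE\d(v_0,v_0)$, compared against $\L(d)=\pE^d v_0$.

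The main obstacle is the valuation identity
$\vF\trEOF(y)=\nu_{E_0}(y)/e(E_0/F)$ for $y\in D(V)$. Since $E/F$ is
biquadratic, $e(E/F)=f(E/F)=2$, so either $E_0/F$ is unramified with
$E/E_0$ ramified, or $E_0/F$ is ramified with $E/E_0$ unramified. In
the unramified case $e(E_0/F)=1$, and the identification
$N_{E/E_0}(E^\times)=F^\times\mathfrak o_{E_0}^{\times 2}$ combined
with the coset hypothesis translates to the statement that the
residue class of $y$ in $k_{E_0}$, after normalizing by a power of
$\piF$, avoids $\ker\tr_{k_{E_0}/\kF}$; hence $\vF\trEOF(y)=\nu_{E_0}(y)$.
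In the ramified case, $E/E_0$ unramified forces $N_{E/E_0}(E^\times)$
to consist of even-valuation elements of $E_0^\times$; as $\ker\trEOF$
then consists of odd-valuation elements, the coset hypothesis forces
$\nu_{E_0}(y)$ to be even, and the tame-trace formula for $E_0/F$
(valid since $p$ is odd) gives $\vF\trEOF(y)=\nu_{E_0}(y)/2$.

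The lattice characterisation follows by direct computation. With
$v=xv_0$, $\nu_E(N_{E/E_0}(x))=2\vE(x)$, $\vE(\b)=-n$, and the first
identity, one obtains
\[
\nu_E(y)=2\vE(x)-n+1-d,\qquad
\vF h(v,\b v)=\frac{\nu_{E_0}(y)}{e(E_0/F)}=\frac{\nu_E(y)}{2}
=\vE(x)-\frac{n+d-1}{2}.
\]
So $\vF h(v,\b v)=s$ iff $\vE(x)=s+\tfrac{1}{2}(n+d-1)$, which is
exactly the stated condition $v\in\L(s+\tfrac12(n+d-1))-\L(s+\tfrac12(n+d+1))$.
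The parity of $n+d$ needed for these indices to be integers is not an
extra assumption: the previous step shows
$e(E_0/F)\mid\nu_{E_0}(\b\d(v_0,v_0))$, which together with the first
identity forces $n+d$ odd.
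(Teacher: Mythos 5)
Your proof is correct, and its skeleton coincides with the paper's: reduce via $h(v,\b v)=-2\,\trEOF(\b\,\d(v,v))$ to a statement about the valuation of $\trEOF$ on the distinguished $N_{E/E_0}(E^\times)$-coset, then conclude by writing $v=xv_0$ and using $\vE\,\d(v_0,v_0)=1-d$ (which the paper absorbs into the duality normalization and you verify explicitly). The genuine difference is how the central identity $\vF\trEOF(y)=\nu_{E_0}(y)/e(E_0/F)$ is obtained. The paper argues uniformly: writing $\trEOF(x)=x(1+\tilde x/x)$, it shows that the elements with $\tilde x/x\equiv -1\pmod{\mathfrak p_{E_0}}$ all lie in one coset of $F^\times E_0^{\times 2}=N_{E/E_0}(E^\times)$, necessarily the coset containing $\ker\trEOF$, so the defect term $\nu_{E_0}(1+\tilde x/x)$ vanishes on the other coset. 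You instead split according to the ramification of $E_0/F$: non-vanishing of the residue-field trace when $E_0/F$ is unramified, and valuation parity plus the tame trace formula when it is ramified. Both routes work; the paper's avoids the case analysis, while yours is more concrete and, in the ramified case, makes the evenness of $\nu_{E_0}(y)$ --- hence the integrality of $\tfrac12(n+d-1)$, which you rightly note is automatic rather than an extra hypothesis --- completely explicit. The only step you assert without detail is the unramified-case ``translation'': that the elements $y$ whose normalized residue lies in $\ker\tr_{k_{E_0}/k_F}$ form exactly the coset containing $\ker\trEOF$. This is routine (lift a nonzero kernel residue to a unit in $\ker\trEOF$ and use $1+\mathfrak p_{E_0}\subseteq\mathfrak o_{E_0}^{\times 2}$, $p$ odd), of the same nature as the paper's ``easy checking according to the ramification,'' so it is a matter of presentation rather than a gap.
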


\begin{proof}
We have $ h(v, \b v) = -2 \, \tr_{E_0/F} (\b \d(v,v))$ (\S\ref{biquad}); the first
statement is thus a consequence of the following property: 
\begin{quote}
{\it Let $x$ be an element of the  $  F^\times E_0^{\times 2}$-coset in
$ E_0^\times $  that does not contain the kernel of $ \tr_{E_0/F}
$. Then $\vF \tr_{E_0/F}\,  x = \frac 1{e(E_0/F)} \nu_{E_0} x$.}
\end{quote}
Indeed  we have 
$ 
\vF \tr_{E_0/F} \, x = \frac 1{e(E_0/F)} \left[\nu_{E_0} x 
+ \nu_{E_0}  \left(1+ \frac {\tilde x} x \right)\right]
$  where $x \mapsto \tilde x$ 
is the Galois conjugation of $E_0$ over $F$. Let $x$ and $y$ in
$E_0^\times$ such that $  \dfrac {\tilde x} x$ and $   \dfrac {\tilde
 y} y$ belong to $-1+ \mathfrak p_{E_0}$ and  let $u= x/y$. 
Then $\tilde u / u$ belongs to $1+ \mathfrak p_{E_0}$,  which implies that 
$u$ belongs to   $  F^\times E_0^{\times 2}$ (easy checking according to the
ramification of $E_0$ over $F$).   Hence $x$ and $y$ are in the same $
F^\times E_0^{\times 2}$-coset, that must be the coset containing  the
kernel of $ \tr_{E_0/F} $, q.e.d. 

\medskip 
The second statement is now immediate. We can write $v \in V$ as  
$v= u\, v_0$ with $u \in E$. Then: 
$$\aligned 
\vF h(v, \b v) &= \frac 1{e(E_0/F)} \frac 1{e(E/E_0)} \left[
\vE (\b \d(v_0,v_0) )  + \vE (u \bar u)
\right]
\endaligned
$$
hence $ \vF h(v, \b v) = \tfrac 1 2 (-n+1-d) +  \vE u $, q.e.d. 
\end{proof} 

\medskip
We now fix a
  symplectic basis  $(e_{-2},   e_{-1}, e_1 ,e_2)$   adapted
to $\L$: there are non-decreasing functions $\a_s : \BZ \rightarrow \BZ$ such that 
\begin{equation} \label{adapted}
\L(j)= \bigoplus_{s \in \{ -2, -1, 1, 2\}} \pF^{\a_s(j)} e_s \quad (j\in \BZ). 
\end{equation}

\begin{Proposition} \label{Proposition6.3}
For any $g \in P(\L)$, for any $k \in \{ -2, -1, 1, 2\}$, 
the character $\psi_\b$ of $^g U_k$ 
is non trivial on $^g U_k    \cap   P_{\left[\frac {n} 2\right]
+1}(\L)   $.
\end{Proposition}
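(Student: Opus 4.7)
The plan is to reduce Proposition~\ref{Proposition6.3} to an elementary ceiling inequality by combining Lemma~\ref{Lemma6.2.1}(ii) with the explicit valuation formula of Lemma~\ref{Lemma6.3.1}. Fix an adapted symplectic basis $(e_{-2},e_{-1},e_1,e_2)$ as in \eqref{adapted} and set $a_s=\nu_\Lambda(e_s)$, so that $\alpha_s(j)=\lceil (j-a_s)/2\rceil$. Since $g\in P(\Lambda)$ normalises each $\mathfrak a_r(\Lambda)$ and hence each $P_r(\Lambda)$, we have
$$
{}^g U_k \cap P_{[n/2]+1}(\Lambda) \;=\; {}^g U_k(s_0),\qquad s_0 \;=\; \max_{j\in\BZ}\bigl[\alpha_{-k}(j+[n/2]+1)-\alpha_k(j)\bigr].
$$
Using the self-duality $\Lambda(i)^\#=\Lambda(d-i)$, which forces $a_k+a_{-k}=d-1$ on an adapted symplectic basis, a short manipulation of ceilings simplifies this to $s_0=\lceil ([n/2]+d)/2\rceil-a_{-k}$.

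On the other hand, setting $v=ge_{-k}$, we have $\nu_\Lambda(v)=\nu_\Lambda(e_{-k})=a_{-k}$ because $g\in P(\Lambda)$. Lemma~\ref{Lemma6.3.1}, whose proof crucially uses the hypothesis that $D(V)$ lies in the coset of $N_{E/E_0}(E^\times)$ disjoint from $\ker\tr_{E_0/F}$, then gives
$$
\nu_F h(v,\beta v) \;=\; a_{-k}-\tfrac{1}{2}(n+d-1),
$$
so in particular $n+d$ must be odd. By Lemma~\ref{Lemma6.2.1}(ii), the character $\psi_\beta$ is non-trivial on ${}^g U_k(s_0)$ precisely when $s_0\le\tfrac{1}{2}(n+d-1)-a_{-k}$; substituting the formula for $s_0$ and rearranging reduces this to the purely numerical statement
$$
\lceil ([n/2]+d)/2\rceil \;\le\; \tfrac{1}{2}(n+d-1).
$$
This is checked by splitting the two cases permitted by $n+d$ odd: for $n$ odd with $d=0$ it reads $\lceil (n-1)/4\rceil\le (n-1)/2$, and for $n$ even with $d=1$ it reads $\lceil (n+2)/4\rceil\le n/2$; both hold for every $n\ge 1$.

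The main technical obstacle will be the very first step: establishing the normalisation $a_k+a_{-k}=d-1$ for an adapted symplectic basis of the strict self-dual lattice sequence $\Lambda$ (by matching the $\delta$-duality against the $h$-duality on this basis) and then simplifying the ceiling expression for $s_0$ cleanly. Once this accounting is in place, the remainder is a direct combination of the two preceding lemmas with the short arithmetic verification sketched above.
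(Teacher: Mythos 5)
Your argument is correct and follows essentially the same route as the paper's proof: both use that $g\in P(\L)$ normalizes $P_{[\frac n2]+1}(\L)$ to reduce to computing the index $s$ with $U_k\cap P_{[\frac n2]+1}(\L)=U_k(s)$, then compare $s$ with $-\nu_F h(ge_{-k},\b ge_{-k})$ via Lemmas \ref{Lemma6.2.1} and \ref{Lemma6.3.1}, ending with the same numerical inequality (your $\lceil([\frac n2]+d)/2\rceil\le\tfrac12(n+d-1)$ is equivalent to the paper's $\left[\frac{[\frac n2]-d+1}{2}\right]\le\tfrac12(n-d-1)$). The step you flag as the main obstacle, namely $a_k+a_{-k}=d-1$ for an adapted symplectic basis, is exactly Lemma \ref{LemmaA.2} of the appendix (proved there using only $h$-duality), so it can simply be cited.
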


\begin{proof}
Note first that $P(\L) $ normalizes $ P_{\left[\frac {n} 2\right]
+1}(\L)   $, so   $^g U_k    \cap   P_{\left[\frac {n} 2\right]
+1}(\L)  $ is equal to $ ^g U_k(s)   $,  where $s \in \BZ$ is defined by 
$ U_k   \cap   P_{\left[\frac {n} 2\right]
+1}(\L)  =    U_k(s)   $. 
Using  
Lemma~\ref{LemmaA.2} for our 
lattice sequence $\L$ of period $2$ we get, for $x\in F$: 
$$
\aligned
1+x \, t_k  \in  P_{\left[\frac {n} 2\right]
+1}(\L)  &\iff 
\forall  j  ,   \ x \, t_k \L(j) \subset \L(j + \left[\tfrac {n} 2\right]
+1) 
\\ &\iff 
\forall  j  ,   \ 
   e_{-k} \in \L (j  - 2  \alpha_k(j)   + \left[\tfrac {n} 2\right]
+1- 2\vF x) 
\\ &\iff \vlambda\left(e_{-k}\right)\ge 
\max \,  \{ j  - 2  \alpha_k(j) ; j \in \BZ\}
  + \left[\tfrac {n} 2\right]
+1- 2\vF x  
\\ &\iff \vlambda\left(e_{-k}\right)\ge 
\vlambda\left(e_{k}\right)
  + \left[\tfrac {n} 2\right]
+1- 2\vF x  
\\ &\iff 2\vF x  \ge  
2 \vlambda\left(e_{k}\right)
  + \left[\tfrac {n} 2\right]
+2-d . 
\endaligned 
$$
Hence $s = \vlambda\left(e_{k}\right) +1 + \left[ \dfrac { [\frac {n} 2 ]-d+1} 2 \right]$; we have to prove 
$\ s \le - \vF h(ge_{-k}, \b g e_{-k})$ (Lemma~\ref{Lemma6.2.1}).
Since
$g $ belongs to $ P(\L) $ we have 
by Lemmas~\ref{Lemma6.3.1} and~\ref{LemmaA.2}: 
$$\vF h(ge_{-k}, \b ge_{-k}) = \vF h(e_{-k}, \b e_{-k})= - \vlambda\left(e_{k}\right)+d - 1 -\frac 1 2 (n+d-1) ).$$
 The
condition we need is thus 
  $  \left[ \dfrac { [\frac {n} 2 ]-d+1} 2 \right] 
\le  \frac 1 2 (n-d-1)  $ which holds for $n\ge 1$ (the right hand side is an integer by Lemma~\ref{Lemma6.3.1}). 
\end{proof} 

We now derive Theorem \ref{nongenericscs}  in this case: the representation $\pi$ is not generic. 
Indeed if it was, we could find, by Proposition \ref{criterion},  some $g \in  P(\L)$ and some 
$k \in \{ -2, -1, 1, 2\}$  
such that $\l$ contains the trivial character of $ ^g U_k \cap J$. In particular 
the restriction of $\l$ to $P_{\left[\frac {n} 2\right]
+1}(\L)$ would contain the trivial character of   $^g U_k    \cap   P_{\left[\frac {n} 2\right]
+1}(\L)   $. Since this restriction is a multiple of $\psi_\b$ this is impossible.

\subsection{Proof for a  maximal semi-simple   or   non-maximal simple stratum}\label{maxss}

\medskip
We will in this section treat simultaneously 
 cases (\ref{2then0}) and (\ref{2plus2}) in Theorem \ref{nongenericscs}, granted that: 
 
 \begin{Lemma} \label{Lemmasplitting} 
 Under the assumption of case (\ref{2then0}) in Theorem \ref{nongenericscs}, 
any splitting $V=V^1\bot V^2$ of $V$ into 
two one-dimensional $E$-vector spaces splits the lattice chain $\L$, that is, 
for any $t \in \BZ$:
$$  \L(t)= \L^1(t) \bot \L^2(t)\quad \text{ with } \quad \L^i(t) = \L(t) \cap V^i, \ i=1,2.$$
 \end{Lemma}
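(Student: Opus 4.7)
The plan has three stages.

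\emph{Reduction to a single $\oE$-lattice.} Because $\fa_0(\L) \cap B$ is assumed maximal amongst self-dual $\oF$-orders of $B \simeq \End_E(V)$ normalized by $\Ex$, and because $B \simeq M_2(E)$ has all its maximal $\oF$-orders $\GL_2(E)$-conjugate to $M_2(\oE)$, the $\oE$-lattice chain attached to $\L$ in $V$ will have $\oE$-period one: there will exist a single $\oE$-lattice $L_0 \subset V$ and an integer function $k$ with $\L(t) = \varpi_E^{k(t)} L_0$ for every $t \in \BZ$, and the self-duality of $\L$ will translate into $L_0^\# = \varpi_E^r L_0$ for some $r \in \BZ$. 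Since $\L(t) \cap V^i = \varpi_E^{k(t)}(L_0 \cap V^i)$, this reduces the lemma to showing $L_0 = (L_0 \cap V^1) \oplus (L_0 \cap V^2)$ for every orthogonal $E$-splitting.

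\emph{Smith normal form and dual computation.} I would then fix the decomposition $V^i = E w_i$, set $d_i = \d(w_i, w_i)$ and $e_i = \vE(d_i)$; the anisotropy hypothesis reads $-d_2/d_1 \notin N_{E/F}(\Ex)$. Writing $L_0 \cap E w_i = \varpi_E^{a_i} w_i\,\oE$, I would argue by contradiction, supposing $L_1 \oplus L_2 \subsetneq L_0$; by elementary divisors there is then an extra generator $v = c\, w_1 + \varpi_E^{a_2} w_2$ of $L_0$ with $c' := \vE(c) < a_1$. A direct calculation in the basis $\{w_1, w_2\}$ will show that $L_0^\#$ is generated by $\varpi_E^{1-c'-e_1} w_1$ and by $\varpi_E^\alpha w_1 + \mu\, w_2$, with $\alpha = 1 - a_1 - e_1$ and $\mu = -\varpi_E^\alpha \bar c\, d_1 / (\overline{\varpi_E^{a_2}}\, d_2)$.

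\emph{Contradiction from anisotropy.} Matching $L_0^\#$ with $\varpi_E^r L_0$ via intersections with $E w_1$ and projections onto $E w_2$ will yield two expressions for $r$ whose difference forces the integer relation $c' = a_2 + (e_2 - e_1)/2$, together with a unit congruence modulo $\varpi_E^{a_1 - c'}$ equivalent, after unwinding, to $-d_2/d_1 \in N_{E/F}(\Ex)$ -- contradicting anisotropy. The main obstacle is the ramification split: when $E/F$ is unramified, anisotropy forces $e_1 + e_2$ odd, so $e_2 - e_1$ is odd and the integrality $c' \in \BZ$ already fails by parity; when $E/F$ is ramified both $e_i$ are odd (as skew elements of $E$ lie in $F \cdot \varpi_E$), the parity argument is vacuous, and one must carry the signs $(-1)^{c'-a_2}$ arising from $\overline{\varpi_E} = -\varpi_E$ through the congruence and invoke $F^\times / N_{E/F}(\Ex) \simeq \BZ/2\BZ$ to close.
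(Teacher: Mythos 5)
Your Stage~1 reduction is where the argument breaks down. The hypothesis of the construction is that $\fa_0(\L)\cap B$ is maximal among \emph{self-dual} $\oF$-orders of $B$ normalized by $\Ex$; this is maximality in the building of the unitary group of the skew-hermitian form $\delta$, not in that of $\GL_2(E)$, and it does not make $\fa_0(\L)\cap B$ a maximal order of $B\simeq M_2(E)$. So you cannot conclude that the underlying $\oE$-lattice chain has $\oE$-period one. In fact, in precisely the situation of the lemma the conclusion of your reduction is false in half the cases: if $E/F$ is unramified and $\delta$ is anisotropic, then $\vE(\det\delta)=e_1+e_2$ is odd, and there exists \emph{no} $\oE$-lattice $L_0$ with $L_0^{\#}=\varpi_E^{r}L_0$ (that relation forces $\vE(\det\delta)$ to be even -- the very parity you invoke later). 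The unique self-dual $\oE$-chain then has two distinct lattices modulo scaling, e.g.\ $L=\oE w_1\oplus\oE w_2$ and $L^{\#}=\pE w_1\oplus\oE w_2$ in a diagonalizing basis with $e_1$ even and $e_2$ odd (consistent with $J/J^1\simeq k_E^1\times k_E^1$ in this case), and self-duality of $\L$ reads $\L(t)^{\#}=\L(d-t)$ with \emph{different} lattices on the two sides. Since Stages~2--3 are built entirely on the relation $L_0^{\#}=\varpi_E^{r}L_0$, they never engage the unramified case at all: the ``parity contradiction'' you find there contradicts your (false) Stage~1 premise, not the assumed failure of the splitting, so the lemma -- which is certainly not vacuous in that case -- is left unproved.

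Even in the ramified case, where a lattice self-dual up to scaling does exist and the Stage~1 conclusion happens to hold (though not for the reason you give), the closing computation is only asserted (``will show'', ``will yield''), and the final congruence is exactly where anisotropy must do its work; what you are attempting is in effect a hands-on re-proof of the uniqueness, for anisotropic $\delta$, of the self-dual $\oE$-lattice chain in $V$. The paper takes that uniqueness as its key input (Blasco, A.2): anisotropy of $\delta$ gives a unique self-dual $\oE$-chain in $V$; the orthogonal sum of the self-dual chains in $V^1$ and $V^2$ is again a self-dual $\oE$-chain, hence coincides with the chain underlying $\L$, which is split by construction. If you want a self-contained computational argument instead, it has to be carried out at the level of chains, with the duality $\L(t)^{\#}=\L(d-t)$ pairing possibly distinct members, rather than for a single lattice $L_0$.
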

 
 \begin{proof} 
 We know from section \ref{S4} that the given assumption   amounts to the fact that 
 the quadratic form $v \mapsto h(v, \b  v) $ on $V$ has no non trivial isotropic vectors, which implies that 
 the anti-hermitian form $\d$ on $V$ defined by 
  $  h(av,w)=\trEF(a \d(v,w))$ for all $ a \in E$, $v,w \in V$, is anisotropic. 
  From  \cite{Bl}A.2,  the lattice chain underlying $\L$ is thus the unique
 self-dual  $\oE$-lattice chain in $V$. On the other hand self-dual  $\oE$-lattice chains 
 $\L^i$ in $V^i$, $i=1,2$, can be summed into a self-dual  $\oE$-lattice chain in $V$. 
 Unicity implies that $\L$ is obtained in this way.
 \end{proof}

We can now let  $[\L, n, 0 , \b]$, with $\L = \L^1 \bot \L^2$,
 $\b = \b_1+\b_2$ and $n = \max\{ n_1, n_2\} $,  be a maximal
 skew semi-simple stratum or a non-maximal skew simple stratum in $\End_F V$: we 
 place ourselves in the situation of 
 case  (\ref{2then0}) or case  (\ref{2plus2}) 
 in Proposition \ref{Uderprop}, case (\ref{2then0})
  being obtained by letting $E_1=E_2=E$ and $\b_1=\b_2=\b$. In particular 
{\it $E_1=F[\b_1]$ is always isomorphic to $E_2=F[\b_2]$\/} 
  and  one checks easily that the conditions (ii) or (iii) 
  in Proposition \ref{Uderprop} are equivalent to saying that {\it the symplectic form
satisfies: 
$$
\forall v_1 \in V^1,  \ \forall v_2 \in V^2,   \ 
-\, h(v_1, \b v_1)\,  h(v_2, \b v_2) \notin N_{E_i/F}(E_i^\times). 
$$}  

For an homogeneous treatment regardless of the ramification over $F$ of the quadratic 
extensions involved, we  use the conventions explained in section \ref{SA.2}, namely the lattices sequences 
$\L^1$, $\L^2$ and $\L$ are all normalized in such a way that they have period $4$ over $F$ and 
duality given by $d=1$.

\begin{Lemma} \label{vLss}
Let $v_1 \in V^1$, $v_2 \in V^2$ and    $v=v_1+v_2 \in V$. 
Let $\mathbf e  = e(E_i/F)$. 
\begin{enumerate}
\def\labelenumi{{\rm(\arabic{enumi})}}
\def\theenumi{\arabic{enumi}}
\item $\vF h(v, \b v)\, = \min \{  \vF h(v_1, \b_1 v_1),\, 
\vF  h(v_2, \b_2 v_2) \}$;  \label{vLss(1)}
\item $\vlambda(v) \ge 2  \vF h(v, \b v)  -  \max \{\dfrac {2\vEone \b_1}{\mathbf e}, 
\dfrac {2\vEtwo \b_2}{\mathbf e}\}$. 
\label{vLss(2)}
\end{enumerate}
\end{Lemma}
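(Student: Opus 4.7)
The plan is a two-part argument exploiting the orthogonal decomposition $V=V^1\perp V^2$ together with the non-norm condition inherited from Proposition~\ref{Uderprop}.

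For part~(1), I would begin by noting that orthogonality of $h$ on $V^1\perp V^2$ (true for both $h$ and the induced form $\d$) together with $\b_iV^i\subset V^i$ makes the cross terms vanish, giving
\[
h(v,\b v)\;=\;h(v_1,\b_1v_1)+h(v_2,\b_2v_2),\quad\text{with each summand in }F.
\]
The case where some $v_i=0$ is immediate, so I would reduce to both nonzero. Since $V^i$ is one-dimensional over $E_i$ and the induced skew-hermitian form is nondegenerate, the scalar form $v_i\mapsto h(v_i,\b_iv_i)$ on $V^i$ is anisotropic, so neither summand is zero. The non-archimedean inequality $\vF(a+b)\ge\min\{\vF a,\vF b\}$ is strict only when $\vF a=\vF b$ and $-a/b\in 1+\pF$. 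Since $p$ is odd, Hensel's lemma gives $1+\pF\subset(\oFx)^2\subset N_{E_i/F}(E_i^\times)$. A strict inequality would therefore force $-h(v_1,\b_1v_1)/h(v_2,\b_2v_2)$, and hence (after multiplication by the square $h(v_2,\b_2v_2)^2$) the product $-h(v_1,\b_1v_1)\,h(v_2,\b_2v_2)$, to be a norm from $E_i^\times$. This contradicts the hypothesis recorded from Proposition~\ref{Uderprop}, so equality holds.

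For part~(2), the splitting $V=V^1\perp V^2$ also splits $\Lambda$: by construction in case~(\ref{2plus2}), and by Lemma~\ref{Lemmasplitting} in case~(\ref{2then0}). Hence $\Lambda(t)=\Lambda^1(t)\oplus\Lambda^2(t)$ for every $t$, which yields
\[
\vlambda(v)\;=\;\min\{\vlambdaone(v_1),\,\vlambdatwo(v_2)\}.
\]
It thus suffices to establish, for each $i$ with $v_i\ne 0$, the one-piece estimate
\[
\vlambdai(v_i)\;\ge\;2\,\vF h(v_i,\b_i v_i)\;-\;\frac{2\vEi\b_i}{\mathbf e},
\]
which is the analogue on the one-dimensional $E_i$-space $V^i$ of Lemma~\ref{Lemma6.3.1}. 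The same computation applies: writing $v_i=u\,v_{i,0}$ for a generator $v_{i,0}$ of the $\oEi$-chain $\Lambda^i$, the formula $h(v_i,\b_iv_i)=\trEiF(\b_i\,N_{E_i/F}(u)\,\d(v_{i,0},v_{i,0}))$ lets one read off $\vF h(v_i,\b_i v_i)$, while $\vlambdai(v_i)$ is determined by $\vEi(u)$; comparison gives the stated bound. Combining this with part~(1), which yields $\vF h(v,\b v)\le\vF h(v_i,\b_iv_i)$, and with $-\tfrac{2\vEi\b_i}{\mathbf e}\ge-\max\{\tfrac{2\vEone\b_1}{\mathbf e},\tfrac{2\vEtwo\b_2}{\mathbf e}\}$, we obtain for each $i$ the inequality $\vlambdai(v_i)\ge 2\vF h(v,\b v)-\max\{\tfrac{2\vEone\b_1}{\mathbf e},\tfrac{2\vEtwo\b_2}{\mathbf e}\}$; taking the minimum over $i$ completes~(2).

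The main obstacle is the bookkeeping in the one-piece estimate of part~(2): matching the period-$4$, $d=1$ normalization fixed in \S\ref{SA.2} with the strict (period-$2$) formulation of Lemma~\ref{Lemma6.3.1}. Tracking how $\vlambdai$, the integer $n_i=-\vlambdai(\b_i)$, and the duality parameter rescale when the period is doubled, together with the relation between $n_i$ and $\vEi\b_i$ through the ramification index $\mathbf e$, is routine but is exactly where the factors of $2$ and $\mathbf e$ in the stated bound originate.
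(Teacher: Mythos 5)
Your proposal is correct and follows essentially the same route as the paper: part (1) is the identical argument (a strict ultrametric inequality would force the ratio of the two values to be $-1$ modulo $\pF$, so its negative would be a square, hence a norm, contradicting the condition from Proposition~\ref{Uderprop}), and part (2) uses the same splitting $\L(t)=\L^1(t)\oplus\L^2(t)$ together with the one-piece valuation formula. The only cosmetic difference is that the one-piece estimate you sketch by direct computation with $\d_i$ is exactly Lemma~\ref{LemmaA.4} of the appendix (stated there as an equality under the period-$4$, $d=1$ normalization), which the paper simply cites instead of re-deriving.
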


\begin{proof} 
\eqref{vLss(1)} We have $  h(v, \b v)  =   h(v_1, \b_1 v_1) + 
h(v_2, \b_2 v_2) $. The assertion follows if the valuations of 
$h(v_i, \b_i v_i)$, $i=1,2$,  are distinct. If they are equal and
finite, write $  h(v, \b v)  =   h(v_1, \b_1 v_1) (1+   
  \frac {h(v_2, \b_2 v_2)}{h(v_1, \b_1 v_1)} )$. Since 
$\frac {h(v_2, \b_2 v_2)}{h(v_1, \b_1 v_1)}$ cannot be congruent
to $-1$ mod $\pF$ (its opposite would be a square hence a norm) the
result follows. 

\eqref{vLss(2)} From Lemma~\ref{LemmaA.4} and (1) we get 
$   \vlambda(v)= \min_{i=1,2}  \vlambdai(v_i)=
\min_{i=1,2}(2 \vF h(v_i, \b_i v_i) - \dfrac {2 \vEi \b_i}{\mathbf e})
$ whence the result.
\end{proof}

We fix symplectic bases  $\{e_i, e_{-i}\}$ of $V^i$, $i=1,2$,   adapted
to $\L_i$ and use notation \ref{adapted}. 
We denote by $\boI^1$ and $\boI^2$ the orthogonal
projections of $V$ onto $V^1$ and $V^2$ respectively. 

We need information on the intersection of one-parameter subgroups  
 $^g U_k $ with  subgroups of $G$ of 
the following form  (with $a_1, a_2 $   positive integers  and  
$a=\max\{a_1, a_2\}$): 
$$
L = \left( 1 + \fa_{a_1}(\L^1) + \fa_{a_2}(\L^2)
+ \fa_{a}(\L) \right) \cap G  .
$$

\begin{Lemma} \label{LemmagUk2}
Let $g \in P(\L)$;  let $g_i = \vlambda\left(\boI^i\left( g e_{-k}\right)\right)\in \BZ \cup +\infty$.
Then   
$$
^g U_k    \cap L =  { }^gU_k(s)   \ \text{ with } \ 
 s = \left[       \dfrac {\vlambda\left(e_k\right) + \max_{i=1,2  } \{
a_i -  g_i  \}+3}{4}
\right] . 
$$
\end{Lemma}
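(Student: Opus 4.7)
The plan is to unpack the condition $1 + x\,{}^g t_k \in L$ into block-wise conditions using the orthogonal splitting $V = V^1 \perp V^2$, and then evaluate each block using the self-duality of $\L$.

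First, observe that $1 + x\,{}^g t_k \in L$ if and only if $x\,{}^g t_k \in \fa_{a_1}(\L^1) + \fa_{a_2}(\L^2) + \fa_a(\L)$. Because $\L$ splits as $\L^1 \oplus \L^2$ --- by Lemma~\ref{Lemmasplitting} in case~(\ref{2then0}) and by construction in case~(\ref{2plus2}) --- each $\fa_c(\L)$ decomposes as a direct sum of its four components $\fa_c(\L) \cap \Hom(V^j, V^i)$. Using $a = \max\{a_1, a_2\}$, the membership condition decouples into four independent inequalities, one per block $(i,j) \in \{1,2\}^2$, with target $\fa_{a_i}(\L^i)$ on the diagonal and $\fa_a(\L) \cap \Hom(V^j,V^i)$ off the diagonal.

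Next, I would make the block structure of ${}^g t_k$ explicit via the symplectic identity $f_k^*(g^{-1}v) = \epsilon(k)\, h(g e_{-k}, v)$ for $g \in G$, already at work in Lemma~\ref{Lemma6.2.1}. Setting $v_i = \boI^i(g e_{-k}) \in V^i$ and using the orthogonality $V^1 \perp V^2$, this yields
$$
(x\,{}^g t_k)^{ij}(w) \;=\; x\,\epsilon(k)\, h(v_j, w)\, v_i , \qquad w \in V^j .
$$
Each block is a rank-one operator of image $F v_i$. Invoking the self-duality of $\L^j$ (Lemma~\ref{LemmaA.2}), one obtains the basic valuation identity $\min_{w \in \L^j(t)} \vF h(v_j,w) = \lfloor (g_j + t - d_j)/4 \rfloor$, where $d_j$ is the duality index of $\L^j$. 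Feeding this into the membership condition and maximising over $t \in \BZ$, the $(i,j)$-block inequality becomes
$$
4\,\vF(x) \;\ge\; c_{ij} - g_i + (d_j + 3 - g_j),
$$
with $c_{ii} = a_i$ and $c_{ij} = a$ for $i \ne j$.

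The last step is to combine the four block inequalities and match them to the claimed bound. Self-duality applied to the adapted symplectic basis gives $\vlambda(e_k) + \vlambda(e_{-k}) = d_i + 3$ for the $V^i$ containing $e_{\pm k}$; meanwhile, $g \in P(\L)$ together with $\L = \L^1 \oplus \L^2$ forces $\min\{g_1, g_2\} = \vlambda(e_{-k})$. Substituting, the diagonal block $(i,i)$ with $g_i = \vlambda(e_{-k})$ becomes exactly $4\,\vF(x) \ge \vlambda(e_k) + (a_i - g_i)$. A short case analysis --- keyed on which of $g_1, g_2$ is minimal and on whether $a = a_1$ or $a_2$ --- then shows that the remaining three block inequalities impose no stronger condition; the degenerate case $g_i = +\infty$ (i.e.\ $g e_{-k} \in V^{3-i}$) kills the three blocks involving $v_i$ and is absorbed by the convention $a_i - g_i = -\infty$. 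Rewriting the combined inequality as $\vF(x) \ge \lceil (\vlambda(e_k) + \max_i\{a_i - g_i\})/4 \rceil = \lfloor (\vlambda(e_k) + \max_i\{a_i - g_i\} + 3)/4 \rfloor$ gives $s$ as claimed.

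The main obstacle lies in this final case analysis: off-diagonally the coefficient $a$ appears in place of $a_i$, so a priori the off-diagonal inequalities could produce a strictly tighter condition than the diagonal ones. Ruling this out relies crucially on the lower bound $g_i \ge \vlambda(e_{-k})$ that each summand $v_i$ inherits from $g \in P(\L)$ and from the splitting $\L = \L^1 \oplus \L^2$.
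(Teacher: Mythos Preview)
Your approach is correct in outline but takes a considerably more laborious route than the paper. The paper's proof is four lines: it invokes \cite{BK2}~2.9 to characterize $L$ as $\{1+y : y\L(j)\subseteq \L^1(j+a_1)+\L^2(j+a_2)\hbox{ for all }j\}$, then exploits directly that $t_k$ has rank one with $t_k\L(j)=\pF^{\alpha_k(j)}e_{-k}$. Since $g\in P(\L)$, one gets ${}^gt_k\L(j)=\pF^{\alpha_k(j)}\,ge_{-k}$, and the membership condition becomes simply $4\vF(x)+g_i\ge j-4\alpha_k(j)+a_i$ for $i=1,2$ and all $j$; Lemma~\ref{LemmaA.2} finishes. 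There is no block decomposition of the domain and no case analysis: the image of ${}^gt_k$ is a single line, so one only projects the \emph{target}, never the source.

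Your route instead decomposes ${}^gt_k$ into four blocks $\Hom(V^j,V^i)$ and then must argue away the off-diagonal pieces, where the exponent $a=\max\{a_1,a_2\}$ rather than $a_i$ appears. Your ``short case analysis'' does go through (the inequalities $g_i\ge\vlambda(e_{-k})$ are exactly what is needed), but it is genuinely a case split and you have not written it out. Two further cautions: your identity $\vlambda(e_k)+\vlambda(e_{-k})=d_i+3$ is not what Lemma~\ref{LemmaA.2} gives (with $d(\L)=1$ the sum is $0$), so your intermediate formulae need adjusting even if the endpoint is right; and your duality formula for $\min_w\vF h(v_j,w)$ is off by one. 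What your approach buys is self-containment (no appeal to \cite{BK2}); what the paper's approach buys is that the whole computation collapses once you notice $t_k$ acts through the single vector $e_{-k}$.
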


 \begin{proof}
     We have for $x \in F $ and $g \in G$ (see
\cite{BK2} 2.9):
$$
\aligned
1+x \, ^g t_k    \in L  &\iff x \, ^g t_k   \L(j) \subseteq \L^1(j+a_1 ) + 
\L^2(j+a_2 ) \text{ for any }j \in \BZ .
\endaligned
$$
Since $t_k  \, \L(j) = \pF^{\alpha_k(j)} e_{-k}$  we have, if  $g $ belongs to $ P(\L)$:  
$$
\aligned
1+x \, ^gt_k   \in L  &\iff 
\forall  j  ,   \ 
x g e_{-k} \in \L^1(j+a_1  \! - 4  \alpha_k(j) ) + 
\L^2(j+a_2 \!- 4 \alpha_k(j)) \\
&\iff \text{ for } i=1,2, \ \forall  j  ,   \   
4 \vF x +g_i \ge  j   - 4 \alpha_k(j) + a_i  .
\endaligned 
$$
We conclude with   Lemma~\ref{LemmaA.2}.
\end{proof}

We now apply this lemma  to the subgroup $L$ obtained with  
$a_i=\left[\frac {n_i} 2\right] +1$, $i=1,2$. 
Define integers $l_1$ and $l_2$ by 
$\, l_i = \vF h(\boI^i\left( g e_{-k} \right), \b_i \boI^i\left( g
e_{-k} \right))$. 
From Lemmas~\ref{Lemma6.2.1} and~\ref{LemmagUk2}, the character 
$\psi_\b$ of $g U_k g^{-1}$ is non trivial on $g U_k  g^{-1} \cap L $ 
if and only if 
$$
\left[       \dfrac {\vlambda\left(e_k\right) + \max_{i=1,2  } \{
\left[\frac {n_i} 2\right] +1 -  g_i  \}+3}{4} \right]
\le   -\min  \{ l_1, l_2\} . 
$$
On the other hand we
 have  $g_i = 2l_i -  \dfrac {2 \vEi \b_i} {\mathbf e} $  by Lemma
\ref{LemmaA.4},  and Lemma \ref{LemmaA.3} relates $n_i=-\vlambdai(\b_i)$ and $\vEi \b_i$; 
 in any case, one checks: 
$$\ \max_{i=1,2  } \{
\left[\frac {n_i} 2\right] +1 -  g_i  \} = \e -2 \min  \{ l_1, l_2\} \quad  
\text{ with } \e= 0 \text{ or } 1.$$  
It follows that    $\psi_\b$  is
non trivial on $g U_k  g^{-1} \cap L $  if and only if 
$\   \vlambda\left(e_k\right)  
\le   -2\min  \{ l_1, l_2\}-\e $, that is,  
  $\   \vlambda\left(e_k\right)  
\le   -2\vF h(  g e_{-k} , \b  g
e_{-k}  )-\e $. 
Now  Lemma \ref{vLss} gives us: 
$$ \vlambda\left(g e_{-k}\right) \ge 
2\vF h(  g e_{-k} , \b  g
e_{-k}  ) -  \max \{\dfrac {2\vEone \b_1}{\mathbf e}, 
\dfrac {2\vEtwo \b_2}{\mathbf e}\}.$$ 
Since $g$ belongs to $P(\L)$, we have 
$\vlambda\left(g e_{-k}\right) = \vlambda\left( e_{-k}\right)
= - \vlambda\left(  e_{k}\right)$ (Lemma \ref{LemmaA.2}), which implies 
the desired  inequality.  We have just proven: 

\begin{Proposition} \label{Proposition6.4}
{\it For any $g \in P(\L)$, for any $k \in \{ -2, -1, 1, 2\}$, 
the character $\psi_\b$ of $g U_k
g^{-1}$ is non trivial on $g U_k  g^{-1} \cap L $, where  
$$
L = \left( I + \fa_{\left[\frac {n_1} 2\right] +1}(\L^1) + 
\fa_{\left[\frac {n_2} 2\right] +1}(\L^2)
+ \fa_{\left[\frac {n} 2\right] +1}(\L) \right) \cap G.
$$}
\end{Proposition}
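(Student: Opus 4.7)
The plan is to run the same strategy used for the maximal simple case (Proposition~\ref{Proposition6.3}), but with the two-component analogue Lemma~\ref{LemmagUk2} in place of the direct computation with $P_{[n/2]+1}(\L)$.

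First I would invoke Lemma~\ref{Lemma6.2.1}: writing $gU_kg^{-1}\cap L = {}^gU_k(s)$ for the appropriate filtration index $s$, the non-triviality assertion is equivalent to
$$
s \ \le\ -\,\vF h(ge_{-k},\b ge_{-k}).
$$
Applying Lemma~\ref{LemmagUk2} with $a_i = [n_i/2]+1$ gives an explicit formula for $s$ in terms of $\vlambda(e_k)$, the $n_i$, and the quantities $g_i := \vlambda(\boI^i(ge_{-k}))$, reducing the whole question to a numerical inequality.

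The next step is to translate this inequality into hermitian language by introducing $l_i := \vF h(\boI^i(ge_{-k}),\b_i\boI^i(ge_{-k}))$. Lemma~\ref{LemmaA.4} gives $g_i = 2l_i - 2\vEi\b_i/\mathbf e$, and Lemma~\ref{LemmaA.3} pins $n_i$ down in terms of $\vEi\b_i$. Combining these with short bookkeeping through the floor $[n_i/2]$, I expect $\max_{i=1,2}\{[n_i/2]+1-g_i\}$ to collapse to $\e-2\min\{l_1,l_2\}$ for some $\e\in\{0,1\}$. Using Lemma~\ref{vLss}\eqref{vLss(1)}, which identifies $\min\{l_1,l_2\}$ with $\vF h(ge_{-k},\b ge_{-k})$, the required inequality then takes the clean form
$$
\vlambda(e_k) \ \le\ -2\, \vF h(ge_{-k},\b ge_{-k})-\e.
$$

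To close the loop I would apply Lemma~\ref{vLss}\eqref{vLss(2)} to $v=ge_{-k}$: it bounds $\vlambda(ge_{-k})$ below by $2\vF h(ge_{-k},\b ge_{-k})$ minus a correction of the shape $\max\{2\vEone\b_1,2\vEtwo\b_2\}/\mathbf e$. Since $g\in P(\L)$, Lemma~\ref{LemmaA.2} combined with the period-$4$ normalisation of $\L$ gives $\vlambda(ge_{-k})=\vlambda(e_{-k})=-\vlambda(e_k)$, and substitution closes the chain of inequalities. The main obstacle I anticipate is precisely the parity bookkeeping in the middle step: the value $\e\in\{0,1\}$ depends on the ramification of each $E_i$ and on the individual parities of the $n_i$, and one must verify --- uniformly across the ramified/unramified combinations --- that the correction term in Lemma~\ref{vLss}\eqref{vLss(2)} is always strong enough to absorb $\e$. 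The uniform period-$4$, $d=1$ normalisation of $\L^1$, $\L^2$ and $\L$ is exactly what lets the ramification-dependent discrepancies between the two components cancel in a single inequality.
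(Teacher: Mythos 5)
Your proposal is correct and follows essentially the same route as the paper's own argument: Lemma~\ref{Lemma6.2.1} plus Lemma~\ref{LemmagUk2} with $a_i=[\frac{n_i}2]+1$, conversion via Lemmas~\ref{LemmaA.4} and~\ref{LemmaA.3} to get $\max_i\{[\frac{n_i}2]+1-g_i\}=\e-2\min\{l_1,l_2\}$ with $\e\in\{0,1\}$, then Lemma~\ref{vLss} and Lemma~\ref{LemmaA.2} (with the period-$4$, $d=1$ normalisation) to close the inequality. The parity bookkeeping you flag as the remaining obstacle is exactly the step the paper also leaves as a case check, and the absorption of $\e$ works because $\vEi\b_i<0$ forces the correction term $2\vEi\b_i/\mathbf e\le -1\le-\e$.
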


At this point we can derive  Theorem \ref{nongenericscs}  in  cases  (\ref{2then0}) and (\ref{2plus2})
 as in the previous subsection: the representation $\pi$ is not generic. 
Indeed if it was, we could find, by Proposition \ref{criterion},  some $g \in  P(\L)$ and some 
$k \in \{ -2, -1, 1, 2\}$  
such that $\l$ contains the trivial character of $ ^g U_k \cap J$. In particular 
the restriction of $\l$ to the above subgroup $L$  would contain the trivial character of   $^g U_k    \cap  L  $. Since this restriction is a multiple of $\psi_\b$ this is impossible.

\subsection{Non-genericity  in the degenerate case (\ref{2plus0})} \label{S6.degenerate}

We let again $[\L, n, 0 , \b]$, with $\L = \L^1 \oplus \L^2$, $\b =
\b_1+\b_2$ and $n = \max\{ n_1, n_2\} $,  be a   skew
semi-simple stratum in $\End_F V$, but we assume that one of the two
simple strata involved is null (case \ref{2plus0}). Although there does exist 
a maximal unipotent subgroup on which $\psi_\b$ is a character, 
this character is then degenerate (Proposition \ref{Uderprop} 
and Remark \ref{Uderpropdegenerate}). We will show that the corresponding 
supercuspidal representation is non-generic, using Proposition 
\ref{2plus0nongeneric} and the criterion 
  \ref{criterion}.
  
  Criterion  \ref{criterion} involves conjugacy by elements in an Iwahori subgroup. 
  We will   find convenient to use the standard Iwahori subgroup, and  
  to use an Iwahori subgroup normalizing the lattice chain $\L$. These  conditions 
  can both be fulfilled at the possible cost of exchanging  $\L^1$ and $ \L^2$, that is, 
  we have to complicate notations and let $\{1,2\} = \{r,s\}$ with 
$[\L^r, n_r, 0 , \b_r]$ not null and $[\L^s, n_s, 0 , \b_s] =
[\L^s, 0, 0 , 0]$; in particular $n=n_r$.

  Since the proof below is rather technical, 
we first sketch it, assuming $\b_2=0$. In a symplectic basis as in  \S \ref{other},
  $\psi_\b$ does define a character of the upper and lower 
triangular unipotent subgroups, trivial on the long root subgroups $U_{\pm 2}$ 
corresponding to the null stratum but non trivial on the other long root subgroups $U_{\pm 1}$. 
As in the previous case, we have a subgroup $L$ of $J$ on which $\lambda$ 
restricts to a multiple of $\psi_\b$. We will show that for any Iwahori conjugate 
$^g U_{\pm 1}$, $\psi_\b$ is non trivial on $^g U_{\pm 1} \cap L$. Next we will identify   subsets 
$X_2$ and $X_{-2}$ of $I$ such that, for $g \in X_{\pm 2}$, $\psi_\b$ is again non trivial on $^g U_{\pm 2}\cap L$. 
The last step will be to show  that for $g \in I-X_{\pm 2}$, if the representation $\l$ contains the trivial character 
of $^g U_{\pm 2}\cap J$, then  it contains the trivial character 
of $  U_{\pm 2}\cap J$ -- this last possibility being excluded by Proposition \ref{2plus0nongeneric}. 
Hence, by Proposition \ref{criterion}, the representation induced from $\lambda$ cannot be generic.

 We pick a symplectic  basis
$(e_{-2},   e_{-1}, e_1 ,e_2)$ of $V$ adapted to $\Lambda$ and such that 
$P(\L)$ contains the standard Iwahori subgroup  $I$  consisting of matrices 
with entries in $\oF$ which are
upper triangular modulo $\pF$. We normalize lattice sequences $\Lambda_1$ 
and $\Lambda_2$ such that they have period $4$ over $F$ and duality invariant 
$1$ and define $\epsilon = 0 $ if $\L_r$ contains a self-dual lattice, 
$\epsilon = 1$ otherwise (\S\ref{SA.2}).    Let: 
$$
L = \left( I + \fa_{1}(\L^s) + \fa_{\left[\frac {n} 2\right] +1}(\L^r)
+ \fa_{\left[\frac {n} 2\right] +1}(\L) \right) \cap G  . 
$$
This is a subgroup of $H^1$ (from the definition \cite{S3} p.131) 
on which $\theta$ restricts to $\psi_\b$ (\cite{S4}   Lemma 3.15). 

\begin{Lemma} \label{Lemma6.6.4} 
Let $k $ in $ \{ -2, -1, 1, 2\}$.
For $y \in P(\L)$, define  $y_r= \vlambda(\boI^r(y e_{-k}))$, 
$y_s= \vlambda(\boI^s(y e_{-k}))$ and   
$\, l_r = \vF h(\boI^r\left( y e_{-k} \right), \b_r \boI^r\left( y
e_{-k} \right))$. 
The character 
$\psi_\b$ of $y U_k y^{-1}$ is non trivial on $y U_k  y^{-1} \cap
L$ if and only if either  $ \ \boI^r(y e_{-k}) =0 \ $ or 
$ \ \boI^r(y e_{-k}) \ne 0 \ $ and 
$$
\left\{\aligned
&y_r \le \vlambda (e_{-k})   + \left[\tfrac {n} 2\right] + 1 - 2 \epsilon  \\
&y_s \ge - \vlambda (e_{-k}) + 4 \, l_r +1  
\endaligned\right.
$$
\end{Lemma}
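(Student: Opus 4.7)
The plan is to identify the subgroup $y U_k y^{-1} \cap L$ with a specific piece ${}^y U_k(s)$ of the root-group filtration and then invoke Lemma~\ref{Lemma6.2.1}, which recasts non-triviality of $\psi_\b$ on ${}^y U_k(s)$ as the inequality $s \le -\vF h(y e_{-k}, \b y e_{-k})$. Since $\b_s = 0$ and $V^r \perp V^s$, the valuation $\vF h(y e_{-k}, \b y e_{-k})$ equals $l_r$, with the convention $l_r = +\infty$ when $v_r := \boI^r(y e_{-k}) = 0$. In the degenerate case $v_r = 0$, Lemma~\ref{Lemma6.2.1} shows that $\psi_\b$ is identically $1$ on all of ${}^y U_k$, hence on the intersection; this is the first alternative recorded in the statement and is precisely what forces the separate treatment of \S\ref{S6.degenerate}. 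Hereafter we work under the assumption $v_r \ne 0$.

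Following the template of Lemma~\ref{LemmagUk2}, the condition $1 + x\,{}^y t_k \in L$ is unpacked using $t_k \L(j) = \pF^{\alpha_k(j)} e_{-k}$ and $y e_{-k} = v_r + v_s$. The defining sum $\fa_1(\L^s) + \fa_{[\tfrac n 2]+1}(\L^r) + \fa_{[\tfrac n 2]+1}(\L)$ has a natural block structure with respect to the decomposition $V = V^r\oplus V^s$, so projecting the containment of $x\,y t_k y^{-1}(\L(j))$ onto $V^r$ and $V^s$ separates it into two independent constraints, one governed by $y_r$ and one by $y_s$. Applying Lemma~\ref{LemmaA.2} to extract $\max_j\{j - 4\alpha_k(j)\} = \vlambda(e_k)$ and taking the ceilings forced by the integrality of $\vF(x)$ yields $s$ in closed form.

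The final step is to impose $s \le -l_r$ and rewrite. The $y_s$-component becomes, after the duality identity $\vlambda(e_k) = -\vlambda(e_{-k})$ that stems from the period-$4$, duality-invariant-$1$ normalization of \S\ref{SA.2}, precisely $y_s \ge -\vlambda(e_{-k}) + 4 l_r + 1$. The $y_r$-component is converted into an upper bound on $y_r$ by eliminating $l_r$ via the relation $y_r = 2 l_r - 2\nu_{E_r}(\b_r)/\mathbf{e}$ furnished by Lemma~\ref{LemmaA.4}, after which Lemma~\ref{LemmaA.3} rewrites $\nu_{E_r}(\b_r)$ in terms of $n = -\vlambda(\b_r)$; the ceiling correction collapses into the parity term $2\epsilon$, producing the bound $y_r \le \vlambda(e_{-k}) + [\tfrac n 2] + 1 - 2\epsilon$.

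The main technical obstacle is the book-keeping of the parity term $2\epsilon$: the ceilings arising from the integrality of $\vF(x)$ interact in a non-trivial way with the substitution $l_r \leftrightarrow y_r$, and this requires a case-distinction according to whether $\L^r$ contains a self-dual lattice. It is precisely the normalization conventions of \S\ref{SA.2} which make the conversion uniform and force the particular shape of the two displayed inequalities.
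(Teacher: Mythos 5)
Your proposal is correct and follows the paper's own argument essentially step for step: it computes $yU_ky^{-1}\cap L$ as a filtration subgroup via Lemma~\ref{LemmagUk2} (applied with exponent $1$ on the $\L^s$-block and $\left[\tfrac{n}{2}\right]+1$ on the $\L^r$-block), tests non-triviality by the criterion $t\le -l_r$ of Lemma~\ref{Lemma6.2.1}, and converts to the two displayed inequalities using $\vlambda(e_k)=-\vlambda(e_{-k})$ (Lemma~\ref{LemmaA.2}) together with the relation $y_r=2\,l_r+\left[\tfrac{n}{2}\right]+1-\epsilon$ obtained from Lemmas~\ref{LemmaA.3} and~\ref{LemmaA.4} under the normalization of \S\ref{SA.2}, which is exactly where the paper's $2\epsilon$ comes from. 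The only wrinkle concerns the degenerate case $\boI^r(ye_{-k})=0$: there you, exactly as in the paper's proof, conclude that $\psi_\b$ is \emph{trivial} on $yU_ky^{-1}$ (since $h(ye_{-k},\b\, ye_{-k})=0$), which is the reading needed later in \S\ref{S6.degenerate}, so your remark that this ``is the first alternative recorded in the statement'' merely takes the statement's wording at face value and does not affect the argument.
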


\begin{proof}
We have  $  h(v, \b v)  =      h(v_r, \b_r v_r)$ (for $v=v_1+v_2$, $v_i
\in V^i$), so  
with Lemmas~\ref{LemmaA.4} and \ref{LemmaA.3}: 
$$
\vF h(v, \b v) = t \iff  \boI^r(v) \in \L^r( 2t+ \left[\tfrac {n} 2\right] + 1 -   \epsilon) 
  -
\L^r(  2t+ \left[\tfrac {n} 2\right] + 1 -   \epsilon +1).
$$
In particular, if $\boI^r(y e_{-k})= 0$, the character $\psi_\b$ 
is   trivial on $ \ y U_k y^{-1} $ (Lemma \ref{Lemma6.2.1}).

We now assume  $  \boI^r(y e_{-k})\ne 0$ and get from  
 Lemma~\ref{LemmaA.4}: 
 \begin{equation}\label{yrlr} 
 y_r = 2 \,  l_r + \left[\frac {n} 2\right] +1-\epsilon . 
 \end{equation}
 We apply Lemma~\ref{LemmagUk2} to $L$:  
$$
 y U_k  y^{-1} \cap L = y  U_k(t) y^{-1} \text{  with  } 
t = \left[       \dfrac {\vlambda (e_{k}) + \max \{ 1 -  y_s, 
\epsilon - 2 \, l_r  \} +3 }{4}
\right] . 
$$
Using Lemma~\ref{Lemma6.2.1} we conclude that the character 
$\psi_\b$ of $y U_k y^{-1}$ is non trivial on $y U_k  y^{-1} \cap L
$ if and only if $t \le -l_r$ whence the result 
(note that $\vlambda (e_{k}) = - \vlambda (e_{-k})$ by Lemma \ref{LemmaA.2}). 
\end{proof}

\begin{Lemma} \label{Lemma6.6.7}
Let $y \in I$. If   $|k| = r$, or if $|k| = s$
and $\boI^r(y e_{-k}) \notin \L^r(\vlambda (e_{-k}) + \left[\frac {n+1} 2\right] )$,
the character $\psi_\b$ of $y U_k y^{-1}$ is non trivial on $y U_k
y^{-1} \cap L $.
\end{Lemma}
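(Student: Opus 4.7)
\textbf{Plan for Lemma~\ref{Lemma6.6.7}.} The strategy is to apply the previous lemma and verify, in each of the two cases of the hypothesis, that the conditions ensuring non-triviality of $\psi_\b$ on $yU_ky^{-1}\cap L$ are met. Recall from Lemma~\ref{Lemma6.6.4} that what is needed is $\boI^r(ye_{-k})\ne 0$ together with
\begin{equation*}
\text{(b)}\quad y_r \le \vlambda(e_{-k}) + \lfloor\tfrac{n}{2}\rfloor + 1 - 2\epsilon,\qquad
\text{(c)}\quad y_s \ge -\vlambda(e_{-k}) + 4l_r + 1,
\end{equation*}
where by (\ref{yrlr}) the value of $l_r$ is determined by $y_r$ through $l_r = \tfrac{1}{2}(y_r - \lfloor\tfrac{n}{2}\rfloor - 1 + \epsilon)$.

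First suppose $|k|=r$, so $e_{-k}\in V^r$. Since $y\in I\subset P(\L)$ preserves the $\L$-filtration and the basis is adapted to $\L=\L^r\oplus\L^s$, the vector $ye_{-k}$ has $\L$-valuation equal to $\vlambda(e_{-k})$. The diagonal entry $y_{-k,-k}$ is a unit, and upper-triangularity modulo $\pF$ in the chosen ordering of the symplectic basis, combined with the $y\in P(\L)$ constraint $\vF(y_{i,-k})\ge\tfrac14(\vlambda(e_{-k})-\vlambda(e_i))$, forces $y_r=\vlambda(e_{-k})$ and $y_s\ge\vlambda(e_{-k})$ (or $y_s=+\infty$ if $\boI^s(ye_{-k})=0$). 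Conditions (b) and (c) then reduce to the numerical inequalities $0\le\lfloor\tfrac{n}{2}\rfloor+1-2\epsilon$ and $0\le 2\lfloor\tfrac{n}{2}\rfloor+1-2\epsilon$, both of which are valid using the parity constraint relating $n$ and $\epsilon$ recorded in \S\ref{SA.2}.

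Now suppose $|k|=s$ and $\boI^r(ye_{-k})\notin\L^r(\vlambda(e_{-k})+\lfloor(n+1)/2\rfloor)$. The hypothesis directly yields $\boI^r(ye_{-k})\ne 0$ and $y_r\le\vlambda(e_{-k})+\lfloor(n-1)/2\rfloor$; the latter implies (b) via the elementary inequality $\lfloor(n-1)/2\rfloor\le\lfloor n/2\rfloor+1-2\epsilon$, checked by a short case analysis on the parity of $n$ using again the $(n,\epsilon)$-compatibility. Substituting the bound on $y_r$ into $l_r$ transforms (c) into a lower bound on $y_s$; because $y\in P(\L)$ still forces $y_s\ge\vlambda(e_{-k})$ (or $y_s=+\infty$), and because $l_r$ is at most its Case~1 value, this bound holds by the same bookkeeping.

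The main obstacle is the combinatorial control of the four parameters $n$, $\epsilon$, $y_r$, $y_s$: once the relationship between the parity of $n$ and the value of $\epsilon$ coming from the normalisation of $\L_r$ is invoked, all the inequalities fall into place, but the calculation splits into several subcases that must be enumerated explicitly.
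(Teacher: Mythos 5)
Your strategy is the same as the paper's: reduce everything to the two inequalities of Lemma~\ref{Lemma6.6.4} together with the relation \eqref{yrlr}, treat $|k|=r$ by showing that $y\in I$ forces $y_r=\vlambda(e_{-k})$ (the diagonal entry $y_{-k,-k}$ is a unit and the basis is adapted to $\L$, so the $e_{-k}$-coordinate of $\boI^r(ye_{-k})$ has valuation exactly $\vlambda(e_{-k})$ while $y\in P(\L)$ gives the lower bound), and treat $|k|=s$ by feeding the hypothesis $\boI^r(ye_{-k})\notin\L^r(\vlambda(e_{-k})+[\frac{n+1}2])$, i.e.\ $y_r\le\vlambda(e_{-k})+[\frac{n-1}2]$, into those inequalities, invoking the constraint that $\epsilon=1$ forces $n$ even (Lemma~\ref{LemmaA.3}) to close the numerical estimates. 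This is exactly how the paper argues, up to a cosmetic difference: the paper organises the $|k|=s$ case through the dichotomy between $y_r=\vlambda(e_{-k})$ and $y_s=\vlambda(e_{-k})$, whereas you only use $y_s\ge\vlambda(e_{-k})$, which indeed suffices. The first case and the verification of your condition (b) in the second case are correct.

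There is, however, one faulty step: in the $|k|=s$ case you justify condition (c) by asserting that ``$l_r$ is at most its Case~1 value''. That is false in general. In Case~1 one has $y_r=\vlambda(e_{-k})$, hence $4l_r=2\bigl(\vlambda(e_{-k})-[\frac n2]-1+\epsilon\bigr)$ by \eqref{yrlr}; in Case~2 the hypothesis only gives $y_r\le\vlambda(e_{-k})+[\frac{n-1}2]$, so $l_r$ may exceed the Case~1 value by as much as $\frac12[\frac{n-1}2]$ (and typically does: for instance when $\boI^r(ye_{-k})\in\L^r(\vlambda(e_{-k})+1)$ fails to hold one has $y_r\ge\vlambda(e_{-k})+1$). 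The needed inequality nonetheless survives if you substitute the actual bound: $4l_r=2\bigl(y_r-[\tfrac n2]-1+\epsilon\bigr)\le 2\bigl(\vlambda(e_{-k})+[\tfrac{n-1}2]-[\tfrac n2]-1+\epsilon\bigr)\le 2\vlambda(e_{-k})-2$, where the last step uses $[\frac{n-1}2]\le[\frac n2]$ when $\epsilon=0$ and $[\frac{n-1}2]=[\frac n2]-1$ when $\epsilon=1$ ($n$ even). Hence $-\vlambda(e_{-k})+4l_r+1\le\vlambda(e_{-k})-1<y_s$, which is your condition (c). So replace the appeal to the ``Case~1 value'' by this direct computation; with that repair your argument coincides with the proof in the paper.
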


\begin{proof} 
  Since $y$ belongs
to $P(\L)$ we certainly have $\boI^r(y e_{-k} )  \in \L^r(\vlambda (e_{-k})) $, 
$ \boI^s(y e_{-k} )  \in \L^s(\vlambda (e_{-k}))$, and: 
\begin{equation} \label{6.6.5}
\aligned 
\text{ either }&\text{\bf (a) }\ \boI^r(y e_{-k} )  \notin \L^r(\vlambda (e_{-k})+1 ) \\
\text{ or }&\text{\bf (b) }\ \boI^r(y e_{-k} )  \in \L^r(\vlambda (e_{-k})+1 )
\text{ and }   \boI^s(y e_{-k} )  \notin \L^s(\vlambda (e_{-k})+1 ). 
\endaligned
\end{equation}
Assume first {\bf (a)}.
 Then the first condition in Lemma~\ref{Lemma6.6.4} is satisfied and the second 
will hold if 
$- \vlambda (e_{-k}) + 4 \, l_r +1   \le \vlambda (e_{-k})$. 
But we have 
 $y_r = 2 \,  l_r + \left[\frac {n} 2\right] +1-\epsilon$ (\ref{yrlr})
 whence 
 $$4l_r = 2(\vlambda (e_{-k}) - \left[\frac {n} 2\right] -1 + \epsilon) \le 
2 \vlambda (e_{-k})-1.$$ 
 Hence:  
\begin{equation} 
\text{\it if } y \in P(\L)
\text{ \it and } \boI^r(y e_{-k}) \notin \L^r(\vlambda (e_{-k})+1  ),    \ 
 \psi_\b   \ 
\text{\it is non trivial on }  y U_k y^{-1} \cap L.  \label{6.6.6}
\end{equation}

The discussion now will rely on $|k|$. If $|k| = r$ and $y \in I$, then 
{\bf (a)} holds  
 and \eqref{6.6.6} gives the result. If now $y \in I$ and $|k| = s$,  we are left with
case {\bf (b)} in \eqref{6.6.5}; in particular $y_s=\vlambda (e_{-k})$. 
We only have to check that the assumption 
$\boI^r(y e_{-k}) \notin \L^r(\vlambda (e_{-k}) + \left[\frac {n+1} 2\right] )$
implies the two inequalities in \ref{Lemma6.6.4}, which is straightforward granted that, when $\epsilon =1$, 
$n$ is even (\ref{LemmaA.3}).  
\end{proof}

\begin{Lemma} \label{Lemma6.6.8}
Let $|k| = s$ and $y \in I$ such that 
$\boI^r(y e_{-k} )  \in   \L^r(\vlambda (e_{-k}) + \left[\frac {n+1} 2\right])$. 
Define $z = I + Z - \bar Z$ with $Z(e_{-k})= - y_{-k, -k}^{-1}
\boI^r(y e_{-k} )$ and $Z(e_t)=0$ for $t\ne -k$. 
Then $z$ belongs to $P_{\left[\frac {n+1} 2\right] }(\L)$, contained in 
$J \cap I$, and $  \boI^r(z y e_{-k} ) = 0$. 
 \end{Lemma}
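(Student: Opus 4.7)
The plan has four small tasks: show $z$ lies in $G$; establish a valuation bound on $Z-\bar Z$; deduce the subgroup memberships $z\in P_{[\frac{n+1}{2}]}(\L)\subseteq J\cap I$; and compute directly that $\boI^r(zye_{-k})=0$.

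First I would analyse $Z$ and $\bar Z$ structurally. By construction $Z$ has image in $V^r$ and vanishes on every $e_t$ with $t\ne -k$, so $Z(V^r)=0$ and hence $Z^2=0$. The adjoint $\bar Z$, determined by $h(\bar Zv,w)=h(v,Zw)$ and the orthogonality $V^r\perp V^s$, satisfies $\bar Z(V^s)=0$ (since $h(v,Ze_{-k})=h(v,\xi)=0$ for $v\in V^s$, where $\xi:=Z(e_{-k})\in V^r$) and $\bar Z(V^r)\subseteq Fe_k\subseteq V^s$ (only $w=e_{-k}$ contributes). From this $\bar Z^2=0$ trivially, $Z\bar Z=0$ since $Z(e_k)=0$, and $\bar ZZ=0$ because $\bar Z\xi$ is a scalar multiple of $h(\xi,\xi)$, which vanishes by alternation of $h$. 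Hence $(Z-\bar Z)^2=0$, and writing $W=Z-\bar Z$ (which is skew, as $\bar W=\bar Z-Z=-W$), we find $z\bar z=(1+W)(1-W)=1-W^2=1$, so $z\in G$.

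Second, I would convert the hypothesis into a bound on $\vL(Z)$. Since $y\in I$ and $\L$ is adapted to the chosen symplectic basis, the diagonal entry $y_{-k,-k}$ lies in $\oFx$; combined with $\boI^r(ye_{-k})\in\L^r(\vlambda(e_{-k})+[\tfrac{n+1}{2}])$ this gives $\vlambda(\xi)\ge\vlambda(e_{-k})+[\tfrac{n+1}{2}]$. As $Z$ acts only on the $e_{-k}$-coordinate, one reads off $\vL(Z)\ge [\tfrac{n+1}{2}]$. Self-duality of $\fa_\bullet(\L)$ under the involution gives the same bound on $\bar Z$, so $Z-\bar Z\in\fa_{[\tfrac{n+1}{2}]}(\L)$ and $z\in P_{[\tfrac{n+1}{2}]}(\L)$. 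The inclusion into $I$ is automatic since $z\equiv 1\pmod{\pF}$; the inclusion into $J$ uses the standard containment $P_{[n/2]+1}(\L)\subseteq H^1\subseteq J$ (applicable directly when $n$ is odd, and with the parity constraint from Lemma~\ref{LemmaA.3} handling the case $n$ even in conjunction with the structure of $\L$).

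Finally, for the projection identity I would just expand: $zye_{-k}=ye_{-k}+(Z-\bar Z)(ye_{-k})$. Decomposing $ye_{-k}=\boI^r(ye_{-k})+\boI^s(ye_{-k})$ with $\boI^s(ye_{-k})=y_{-k,-k}e_{-k}+y_{k,-k}e_k$, and using that $Z$ kills $V^r$, that $Z(e_k)=0$, and that $Z(e_{-k})=-y_{-k,-k}^{-1}\boI^r(ye_{-k})$, I get $Z(ye_{-k})=-\boI^r(ye_{-k})$; since $\bar Z(ye_{-k})\in V^s$ contributes nothing to the $V^r$-projection, this yields $\boI^r(zye_{-k})=\boI^r(ye_{-k})-\boI^r(ye_{-k})=0$. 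The only delicate step is the identification of $\bar Z$ that makes $(Z-\bar Z)^2=0$ transparent; once that is in place, everything else reduces to tracking valuations.
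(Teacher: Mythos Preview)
Your argument is correct and follows the same route as the paper: the paper also reduces everything to showing $Z\in\fa_{[\frac{n+1}{2}]}(\L)$ (spelling out the chain of equivalences via Lemma~\ref{LemmaA.2} and the identity $\max_t\{t-4\alpha_{-k}(t)\}=\vlambda(e_{-k})$ where you simply say ``one reads off''), while dismissing the checks $z\in G$, $z\in I$ and $\boI^r(zye_{-k})=0$ that you work out carefully as ``one checks easily''. One small sharpening: your reason for $z\in J$ should really be the containment $P_{[(n+1)/2]}(\L)\subseteq J^1$ rather than $P_{[n/2]+1}(\L)\subseteq H^1$, since the latter only matches $[\tfrac{n+1}{2}]$ when $n$ is odd.
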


\begin{proof}
One checks easily that $z$ belongs to $I$ and $  \boI^r(z y e_{-k} ) =
0$. It remains to show that $Z$ belongs to $\fa_{\left[\frac {n+1}
2\right] }(\L)$. We have,  
   using notation \ref{adaptedagain} and Lemma \ref{LemmaA.2}: 
$$
\aligned
Z \in \fa_{\left[\frac {n+1} 2\right] }(\L) 
&\iff  Z \, \pF^{\alpha_{-k}(t)} e_{-k} \subseteq \L(t+ \left[\tfrac {n+1}
2\right])
 \text{ for any } t \in
\BZ \\
\phantom{\fa_{\left[\frac {n+1} 2\right] }(\L)}
&\iff Z e_{-k} \in \L^r(t-4\alpha_{-k}(t) + \left[\tfrac {n+1} 2\right])
 \text{ for any } t \in
\BZ \\ 
\phantom{\fa_{\left[\frac {n+1} 2\right] }(\L)}
&\iff \boI^r(y e_{-k} ) \in \L^r(
\max_{t \in \BZ} \{t-4\alpha_{-k}(t) \}+ \left[\tfrac {n+1} 2\right]) 
\\ 
\phantom{\fa_{\left[\frac {n+1} 2\right] }(\L)}
&\iff \boI^r(y e_{-k} ) \in \L^r( \vlambda (e_{-k})  + \left[\tfrac {n+1} 2\right]) . 
\endaligned
$$ 
\end{proof}

With this in hand we are ready to conclude: 

\begin{Proposition} \label{Proposition6.6.9}
If the representation $\Ind_J^G \lambda$ has a Whittaker model, 
there exists $k \in \{ -s, s\}$ such that $\lambda $ contains the
trivial character of $  U_k   \cap J = U_k   \cap P(\L^s)$. 
\end{Proposition}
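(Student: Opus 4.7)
The plan is to invoke Proposition~\ref{criterion}: if $\cInd_J^G\lambda$ is generic, then there exist $y \in I_1$ and a long root index $k \in \{-2,-1,1,2\}$ such that $\lambda$ contains the trivial character of $J \cap y U_k y^{-1}$. It remains to establish (a) that necessarily $|k| = s$, and (b) that $y U_k y^{-1}$ can be conjugated back to $U_k$ by an element of $J$; once both are done, the desired conclusion is immediate.

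For task (a), I use that $\lambda|_{H^1}$ is a multiple of the semisimple character $\theta$ (since $\kappa|_{H^1}$ is $\theta$-isotypic and $\sigma$ is trivial on $J^1 \supseteq H^1$), combined with $L \subseteq H^1$ and $\theta|_L = \psi_\beta$. If $|k| = r$, Lemma~\ref{Lemma6.6.7} asserts that $\psi_\beta$ is non-trivial on $y U_k y^{-1} \cap L$, so $\lambda|_{y U_k y^{-1} \cap L}$ is a non-zero multiple of a non-trivial character and cannot contain the trivial character---contradicting the initial hypothesis. Hence $|k| = s$.

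To begin task (b), I reduce to the case $\boI^r(y e_{-k}) = 0$. If $\boI^r(y e_{-k}) \notin \L^r(\nu_\L(e_{-k}) + \left[\frac{n+1}{2}\right])$, Lemma~\ref{Lemma6.6.7} once more produces the same contradiction. Otherwise Lemma~\ref{Lemma6.6.8} furnishes $z \in P_{\left[\frac{n+1}{2}\right]}(\L) \subseteq J \cap I$ with $\boI^r(zy e_{-k}) = 0$; replacing $y$ by $zy$ preserves the hypothesis since $z \in J$ implies $J \cap (zy) U_k (zy)^{-1} = z (J \cap y U_k y^{-1}) z^{-1}$ and ${}^z\lambda \simeq \lambda$.

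With $y \in I$ and $y e_{-k} \in V^s$, the identity $y t_k y^{-1}(v) = \epsilon(k)\, h(y e_{-k}, v)\, y e_{-k}$ (valid since $y \in \Sp$) shows that $y t_k y^{-1}$ vanishes on $V^r$ (by $V^r \perp V^s$) and has image in $V^s$; thus $y U_k y^{-1} \subseteq \Sp(V^s)$. Writing $y e_{-k} = a e_{-k} + d e_k$ in the basis $(e_{-k}, e_k)$ of $V^s$, the Iwahori condition on $y$ forces $a \in \oF^\times$ and $d \in \oF$, so one can define an explicit $g_0 \in \Sp(V^s)$ (a diagonal-times-unipotent matrix) with $g_0 e_{-k} = y e_{-k}$ and all entries in $\oF$; this places $g_0$ in $\SL_2(\oF) = P(\L^s) \subseteq J$, where the final inclusion uses the maximality of $\L^s$ in case~(\ref{2plus0}). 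The same formula gives $g_0 t_k g_0^{-1} = y t_k y^{-1}$, whence $g_0 U_k g_0^{-1} = y U_k y^{-1}$ and $J \cap y U_k y^{-1} = g_0 (J \cap U_k) g_0^{-1}$. Conjugation by $g_0^{-1} \in J$, which preserves $\lambda$, then transfers the containment to the trivial character of $U_k \cap J$ appearing in $\lambda$, completing the proof. The main obstacle is producing this $g_0$ in $J$, which hinges on the maximality of $\L^s$ in case~(\ref{2plus0}) and the Iwahori-forced form of $y e_{-k}$.
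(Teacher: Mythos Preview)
Your overall strategy is correct and matches the paper's: invoke Proposition~\ref{criterion}, use Lemma~\ref{Lemma6.6.7} (with $\lambda|_L$ a multiple of $\psi_\beta$) to force $|k|=s$ and the valuation condition on $\boI^r(ye_{-k})$, then apply Lemma~\ref{Lemma6.6.8} to reduce to $\boI^r(ye_{-k})=0$. Your endgame differs from the paper's and is more conceptual: rather than manipulating the double coset $(J\cap I)\,y\,(I\cap N_G(U_k))$ via Iwahori decompositions case by case, you exploit the rank-one identity $y t_k y^{-1}(v)=\epsilon(k)h(ye_{-k},v)\,ye_{-k}$ to see that $yU_ky^{-1}$ depends only on $ye_{-k}$, and then produce directly a $g_0\in\Sp(V^s)$ with $g_0e_{-k}=ye_{-k}$. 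This is a nice shortcut.

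There is, however, a genuine gap in the final step. You assert that $g_0$, having integral entries, lies in $\SL_2(\oF)=P(\L^s)$, justifying the equality by ``the maximality of $\L^s$''. Maximality only tells you that $P(\L^s)$ is \emph{a} maximal parahoric of $\Sp(V^s)\simeq\SL_2(F)$; there are two such containing the standard Iwahori $I\cap\Sp(V^s)$, and nothing in the setup forces $P(\L^s)$ to be $\SL_2(\oF)$ in the basis $(e_{-s},e_s)$. If $P(\L^s)$ were the other one and $d\in\oF^\times$, your $g_0$ would not lie in $P(\L^s)$. The fix is easy: sharpen the Iwahori constraint. For $k>0$ the entry $d=(y)_{k,-k}$ sits strictly below the diagonal, hence $d\in\pF$, so $g_0=\left(\begin{smallmatrix}a&0\\d&a^{-1}\end{smallmatrix}\right)$ lies in the Iwahori $I\cap\Sp(V^s)$; for $k<0$ your $g_0=\left(\begin{smallmatrix}a^{-1}&d\\0&a\end{smallmatrix}\right)$ is upper triangular, again in $I\cap\Sp(V^s)$. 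Since $I\subset P(\L)$ by the choice of basis, $g_0\in P(\L)\cap\Sp(V^s)=P(\L^s)\subset J$, and the argument goes through. This is exactly how the paper lands its representative as well (``belongs to $I\cap\Sp(V^2)$ hence to $P(\L)\cap\Sp(V^2)=P(\L^2)$'').
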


\begin{proof}
Recall Proposition \ref{criterion}: 
{\it if the representation $\Ind_J^G \lambda$ has a Whittaker model, 
there exists $k \in \{ -2, -1, 1, 2\}$ and $y \in I$ such that 
$\lambda $ contains the trivial character of $y U_k  y^{-1} \cap J $}. 
Note that if $(k, y)$ is such a pair, so is  $(k, zyx)$ for any $z \in
J\cap I$ and $x \in I \cap N_G(U_k)$. 
 
Assume we are in this situation  and pick such a pair $(k, y)$. 
Since the restriction of $\lambda$ to $L$ is a multiple of $\psi_\b$, 
Lemma~\ref{Lemma6.6.7} tells us that   we must have $|k| = s$ and  
$\boI^r(y e_{-k} )  \in   \L^r(\vlambda (e_{-k}) + \left[\frac {n+1} 2\right])$. 
The Proposition is then an immediate consequence of the following fact: 
\begin{itemize}
\item[($\ast$)] {\it 
The double class $  (J\cap I ) \, y \, (I \cap N_G(U_k))$ contains an
element of $  J\cap I$, indeed of $I \cap P(\L^s)$.} 
\end{itemize}

\medskip
Let us now prove this fact. Assume first that $k=s=2$ (then $r=1$). 
Using the standard Iwahori decomposition of $y \in I$ and the fact that 
upper triangular matrices normalize $U_2$ we may assume that $y$
is a lower triangular unipotent matrix. Now since  
$  \boI^1(y e_{-k} ) $ belongs to $ \L^r(\vlambda (e_{-k}) + \left[\frac {n+1} 2\right]
 )$ we may change $y$ into $zy$ where $z$ is
defined in Lemma~\ref{Lemma6.6.8}. Note that $z$ is also lower
triangular unipotent, hence so is $zy$. Since $  \boI^1(z y e_{-k} ) =
0$, $zy$ has the following shape: 
$$
zy = \begin{pmatrix} 
1&0&0&0\cr 0&1&0&0\cr 0&b&1&0 \cr c&0&0&1
\end{pmatrix}. 
$$ 
The middle block {\scriptsize$ \begin{pmatrix} 1&0 \\  b&1 
\end{pmatrix}$} centralizes $U_2$ so the double class contains 
{\scriptsize$ \begin{pmatrix}
1&0&0&0 \\ 0&1&0&0\\ 0&0&1&0 \\ c&0&0&1
\end{pmatrix}$}
which belongs to $I \cap\Sp(V^2)$ hence to $P(\L) \cap\Sp(V^2)= P(\L^2)$, 
q.e.d. The case $k=-s=-2$ is identical, replacing lower triangular by
upper triangular. The cases $k=s=1$ and $k=-s=-1$ are obtained
similarly, using conjugation by 
$w=${\scriptsize$\begin{pmatrix} 
0&0&1&0\\ 0&0&0&1\\ \piF&0&0&0 \\ 0&\piF&0&0
\end{pmatrix}$} or $w^{-1}$, elements of $\GSp_4(F)$  normalizing $I$,
to get a convenient Iwahori decomposition for the initial element $y$. 
\end{proof}

 We are at last ready to prove non-genericity of supercuspidal representations 
 coming from case (\ref{2plus0}), which will finish the proof  of Theorem \ref{nongenericscs}. 
 The group $  U_k   \cap J$ above 
 is   equal to $J \cap U \cap B^\times$ for some unipotent subgroup $U$ 
 chosen as in Proposition \ref{2plus0nongeneric} -- indeed we have $k \in \{ -s, s\}$
 so $U_k$ is a long root subgroup attached to the two-dimensional space in which we have a null stratum. 
We know from Proposition \ref{2plus0nongeneric} that the restriction of $\lambda$ to 
$  U_k   \cap J$,  a sum of non trivial characters, cannot contain the trivial character. 
 Hence 
 $\Ind_J^G \lambda$ doesn't have a Whittaker model. 

\setcounter{section}{5}

\Section{Appendix: normalization of lattice sequences}\label{SA}

We gather in the first two subsections the technical information about direct sums of 
lattice sequences that we need in several parts of the paper. Specifically, 
in most cases we deal with direct sums of 
self-dual lattice sequences in two-dimensional symplectic spaces and it will be 
 convenient to homogeneize  their $F$-periods and duality invariants. 
 
 Then we proceed with preliminary results in view of the proof, in \S \ref{SA.4}, 
 of Proposition~\ref{thetaagreeswithpsi} 

\subsection{Self-dual lattice sequences}\label{SA.1}

Let $\L$ be an $\oF$-lattice sequence in a finite dimensional $F$-vector space $V$ and let 
$\fa_k(\L)$ be the corresponding filtration of $A$. We define:
$$
  \quad  \vlambda(v) = \max \{i \in \BZ ; v \in \L(i) \} \quad (v \in V) ; \quad 
 \vlambda(g) = \max \{i \in \BZ ; g \in \fa_i(\L) \} \quad (g \in A ). 
$$

When $V$ is   equipped with a symplectic form $h$, lattice duality with respect to $h$    
is defined by $L^\# = \{ v\in V : h(v, L) \subseteq \pF\} $ 
for $L$ an $\oF$-lattice in $V$. An $\oF$-lattice sequence $\L$ in $V$ is 
\it self-dual \rm if there is an integer $d(\L)$, the duality invariant 
of $\L$, such that   
 $\L(t)^\# = \L(d(\L) -t)$. 

We now let $\L$ be a self-dual lattice sequence of period $e$ in $V$  and we pick  
a  symplectic basis  $(e_{-2},   e_{-1}, e_1 ,e_2)$   adapted
to $\L$: there are non-decreasing functions $\a_s : \BZ \rightarrow \BZ$ such that 
\begin{equation}\label{adaptedagain}
\L(j)= \bigoplus_{s \in \{ -2, -1, 1, 2\}} \pF^{\a_s(j)} e_s \quad (j\in \BZ). 
\end{equation}
We will need the following straightforward property: 

\begin{Lemma}  \label{LemmaA.2} 
For $k $ in $ \{ -2, -1, 1, 2\}$, we have 
$$ \vlambda\left(e_k\right)= - \vlambda\left(e_{-k}\right)+d(\L) - 1 = \max
  \,  \{ j - e  \alpha_k(j)  ; j \in \BZ\} .$$ 
\end{Lemma}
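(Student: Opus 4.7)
The plan is to prove the two equalities separately. First I would record that, since the decomposition $\L(j)=\bigoplus_s\pF^{\a_s(j)}e_s$ is direct and $\L(j+e)=\pF\L(j)$ for all $j$, each function $\a_k\colon\BZ\to\BZ$ is non-decreasing and satisfies $\a_k(j+e)=\a_k(j)+1$; in particular every integer is a value of $\a_k$, so
$$
\vlambda(e_k)\ =\ \max\{\,j\in\BZ\ :\ \a_k(j)\le 0\,\}\ =\ \max\{\,j\in\BZ\ :\ \a_k(j)=0\,\}.
$$
Denote this integer by $j^*$; then $\a_k(j^*)=0$, and for $1\le r\le e-1$ one has $\a_k(j^*+r)\ge 1$ (otherwise $j^*+r$ would lie in the set above).

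For the second equality, set $f_k(j)=j-e\,\a_k(j)$. The relation $\a_k(j+e)=\a_k(j)+1$ makes $f_k$ $e$-periodic, so its maximum is attained. At $j=j^*$ one has $f_k(j^*)=j^*$; writing a general $j$ as $j^*+qe+r$ with $0\le r<e$ and $q\in\BZ$, the identity $\a_k(j)=\a_k(j^*+r)+q$ gives $f_k(j)=j^*$ when $r=0$ and $f_k(j)\le j^*+r-e<j^*$ otherwise. Hence $\max_j f_k(j)=j^*=\vlambda(e_k)$.

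For the first equality I would exploit self-duality. A direct computation using $h(e_{-i},e_j)=\d_{ij}$ shows that, for $v=\sum_s v_se_s$, one has $h(v,e_k)=\pm v_{-k}$, so that
$$
\L(t)^{\#}\ =\ \bigoplus_{s}\pF^{1-\a_{-s}(t)}e_s.
$$
Combining this with the self-duality identity $\L(t)^{\#}=\L(d(\L)-t)$ and the directness of the decomposition yields the functional equation $\a_s(d(\L)-t)=1-\a_{-s}(t)$ for all $s,t$. Applied to the characterization of $\vlambda(e_k)$ established above, the condition $\a_k(j)\le 0$ becomes $\a_{-k}(d(\L)-j)\ge 1$, i.e.\ $d(\L)-j>\vlambda(e_{-k})$; taking the largest such $j$ gives $\vlambda(e_k)=d(\L)-1-\vlambda(e_{-k})$.

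The only real obstacle is bookkeeping: keeping the shift by $1$ and the indexing correct when computing $\L(t)^{\#}$ in the symplectic basis, since $h(v,\pF^a e_k)\subseteq\pF$ translates into $v_{-k}\in\pF^{1-a}$ (not $\pF^{-a}$). Once this is tracked, both identities reduce to direct manipulations with the functions $\a_k$.
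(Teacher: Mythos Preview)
Your proof is correct and follows essentially the same approach as the paper. The paper is terser: for the second equality it notes that the sets $\{j-e\alpha_k(j):j\in\BZ\}$ and $\{i\in\BZ:\alpha_k(i)=0\}$ coincide, while you compute the maximum of $f_k$ directly via periodicity; for the first equality the paper argues the equivalence $e_{-k}\notin\L(x+d)\iff\alpha_k(-x)\le 0$ directly from the definition of the dual lattice, whereas you derive the full functional equation $\alpha_s(d(\L)-t)=1-\alpha_{-s}(t)$ first and then specialize --- but the underlying computations are the same.
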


\begin{proof} Since the period of $\L$ is $e$, the two sets $\{ j - e  \alpha_k(j)  ; j \in \BZ\}$ 
and $\{i \in \BZ ;    \alpha_k(i) =0 \}$ coincide:  
the valuation of $e_k$ is the maximum of either one. Next 
we use duality to check that 
$$ 
e_{-k} \notin \L(x+d ) \iff e_{-k} \notin \L(-x)^\# 
\iff h(e_{-k}, \pF^{\alpha_k(-x)}e_k) \notin \pF 
\iff 
\alpha_k(-x) \le 0 . 
$$ 
\end{proof}

\subsection{Normalization of our  lattice sequences}\label{SA.2} 

We start as in \S\ref{S3} case \eqref{2plus2} with an orthogonal
decomposition $V = V^1 \perp V^2 $ and not null skew simple strata
$[\L^i, n_i, 0 , \b_i]$ in  $\End_F( V^i)$. We let
$E_i=F[\b_i]$. We will find convenient to normalize the lattice sequences 
$\L^i$ in such a way that their sum $\L$ is given by 
$\L(t)= \L^1(t) \bot \L^2(t)$ for any $t \in \BZ$, and is self-dual. 
This will be the case provided that $\L^1$ and $\L^2$ have the same period 
and $d(\L^1) = d(\L^2) = 1$ (see \cite{S4} for instance). 

The quadratic form $v \mapsto h(v, \b_i  v) $ on $V^i$ has no non trivial isotropic vectors, since 
 the anti-hermitian form $\d_i$ on $V^i$ defined by 
  $  h(av,w)=\trEiF(a \d_i(v,w))$ for all $ a \in E_i$, $v,w \in V^i$, is anisotropic. Let $v_i$ be a basis of $V^i $ over $E_i$. There is 
  some $u_i$ in $E_i$, satisfying $\bar u_i = -u_i$,  such that this form reads 
  $\d_i(xv_i, yv_i)= ux\bar y$ for all $x, y \in E_i$. Since $E_i$ is tame over $F$, lattice duality is the same for $h$ and $\d_i$, namely
  $\L^i(t)^\# = \{ v \in V^i ; \d_i(v , \L^i(t))\subset \pEi \}$. 
  We have here $(\pEi^t v_i )^\#  = \pEi^{d_i-t} v_i$ with $d_i= 1-\vE u_i$. 
  Hence, up to a translation in indices, the unique self-dual $\mathfrak o_{E_i}$-lattice chain $(L_i)_{i \in \mathbb Z}$ in $V^i$  satisfies one of three possibilities: 
  \begin{enumerate}
  \item $E_i$ is ramified over $F$ and $d_i = 0$;
  \item $E_i$ is unramified over $F$ and $d_i=0$;
    \item $E_i$ is unramified over $F$ and $d_i=1$.
  \end{enumerate}
  
 In the two first cases we put $L_i^\prime (t) = L_i ([\frac t 2 ])$ 
 and get a lattice sequence with duality invariant $d_i^ \prime =1$; in the third case we keep $L_i^\prime = L_i$. In the ramified case we put 
 $\L_i = L_i^\prime$: it has period $4$. We now need to put 
 $\L_i = 2 L_i^\prime $ in the second case, 
 $\L_i = 4 L_i^\prime $ in the third  case, 
 and we get a normalization of our $\oEi$- lattice sequences in $V_i$ 
 such that their period is $4$ and duality invariant $1$. 
 We will use the following straightforward properties of $\L_i$, in each of the three cases above: 
 
 \begin{Lemma}  \label{LemmaA.3} 
Normalize the lattice sequence $\L^i$ such that 
its period over $F$ is $4$ and $d(\L^i)=1$.  
  \begin{enumerate}
  \item If $E_i$ is ramified over $F$ then 
  $\vlambdai(V^i-\{0\})= 2 \BZ +1$  and 
  $\vlambdai(\b_i)=2 \vEi(\b_i) +1$;
  \item If $E_i$ is unramified over $F$ and $\L_i$ contains a self-dual lattice then 
  $\vlambdai(V^i-\{0\})= 4 \BZ +2$  and 
  $\vlambdai(\b_i)=2(2 \vEi(\b_i) +1)$;
   \item If $E_i$ is unramified over $F$ and $\L_i$ does not contain a self-dual lattice then 
  $\vlambdai(V^i-\{0\})= 4 \BZ $  and 
  $\vlambdai(\b_i)=4 \vEi(\b_i)  $.
  \end{enumerate}
\end{Lemma}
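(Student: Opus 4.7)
The plan is to carry out a direct case-by-case analysis, exploiting the fact that $V^i$ is one-dimensional over $E_i$. I would fix a basis vector $v_i$ of $V^i$ over $E_i$ chosen so that the unique self-dual $\oEi$-lattice chain $L_i$ of \S\ref{SA.2} is given by $L_i(k) = \pEi^k v_i$, with $L_i(k)^\# = L_i(d_i - k)$. Every $\oEi$-lattice in $V^i$ then has this form, so the normalized sequence $\L^i$ can be written as $\L^i(t) = \pEi^{f_i(t)} v_i$ for a unique non-decreasing function $f_i : \BZ \to \BZ$.

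First I would pin down $f_i$ explicitly in each of the three cases by unwinding $L'_i(t) = L_i([t/2])$ and $\L^i = n L'_i$ (with $n = 1, 2, 4$ respectively), then verify period $4$ over $F$ and duality invariant $1$ via the compatibility conditions $f_i(t + 4) = f_i(t) + e(E_i/F)$ and $f_i(1 - t) + f_i(t) = d_i$. A concrete outcome is $f_1(t) = \lfloor t/2 \rfloor$ in the ramified case; $f_2(t) = \lfloor (t + 1)/4 \rfloor$ in the unramified case with $d_i = 0$, where $\oEi v_i = L_i(0)$ appears self-dually on the plateau $\{-1, 0, 1, 2\}$; and $f_3(t) = \lceil t/4 \rceil$ in the unramified case with $d_i = 1$.

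Next, for $v = x v_i \in V^i \setminus \{0\}$ the containment $v \in \L^i(t)$ reduces to $\nu_{E_i}(x) \ge f_i(t)$, so $\vlambdai(v) = \max\{t : f_i(t) \le \nu_{E_i}(x)\}$. Substituting each $f_i$ above and letting $\nu_{E_i}(x)$ range over $\BZ$ yields at once the arithmetic progressions $2\BZ + 1$, $4\BZ + 2$ and $4\BZ$ claimed for $\vlambdai(V^i - \{0\})$.

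Finally, since $\b_i \in E_i$ acts on $V^i$ by multiplication, $\b_i \L^i(t) = \pEi^{\nu_{E_i}(\b_i) + f_i(t)} v_i$, so $\b_i \in \fa_j(\L^i)$ if and only if $f_i(t + j) - f_i(t) \le \nu_{E_i}(\b_i)$ for every $t \in \BZ$. The formula for $\vlambdai(\b_i)$ will then follow by evaluating the maximal jump $\sup_t \bigl( f_i(t + j) - f_i(t) \bigr)$ in closed form -- a simple floor or ceiling expression in $j$ -- and solving for the largest admissible $j$; the skew constraint $\bar\b_i = -\b_i$ forces $\nu_{E_i}(\b_i)$ to be odd in the ramified case, and combined with the shift built into $f_i$ this will give the three claimed formulas. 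The main obstacle will be the first step: getting $f_i$ exactly right requires careful bookkeeping of the interaction between the doubling $L \mapsto L'$ and the rescaling $\Lambda \mapsto n\Lambda$, together with the shift in indices needed to bring the duality invariant to $1$; once $f_i$ is in hand, the remaining steps are routine.
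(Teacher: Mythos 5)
The paper records this lemma without proof, as a list of straightforward properties of the normalized sequences, so the only question is whether your outline actually delivers the statement. Your first two steps do: writing $\L^i(t)=\pEi^{f_i(t)}v_i$, the shapes $f_1(t)=\lfloor t/2\rfloor$, $f_2(t)=\lfloor (t+1)/4\rfloor$, $f_3(t)=\lceil t/4\rceil$ are exactly what the construction of \S\ref{SA.2} produces (they satisfy $f_i(t+4)=f_i(t)+e(E_i/F)$ and $f_i(t)+f_i(1-t)=d_i$), and reading off the jump positions gives the three arithmetic progressions for $\vlambdai(V^i-\{0\})$.

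The last step, however, is not routine, and as outlined it does not give the stated formulas. Your criterion is the right one: $\b_i\in\fa_j(\L^i)$ if and only if $\sup_t\bigl(f_i(t+j)-f_i(t)\bigr)\le\vEi(\b_i)$. But with your $f_i$ this supremum equals $\lceil j/2\rceil$, $\lceil j/4\rceil$, $\lceil j/4\rceil$ in the three cases, so the largest admissible $j$ is $2\,\vEi(\b_i)$, $4\,\vEi(\b_i)$, $4\,\vEi(\b_i)$: for a multiplier from $E_i^\times$ acting on the one-dimensional $E_i$-space one always obtains the $\oEi$-period of $\L^i$ times $\vEi(\b_i)$, independently of where the jumps sit, and this matches the lemma only in case (iii). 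The skew condition merely forces $\vEi(\b_i)$ to be odd in the ramified case; it cannot manufacture the additive constants $+1$ and $+2$ of cases (i) and (ii). Those constants are what you get if you instead compute the \emph{vector} valuation $\vlambdai(\b_i v)$ for $v$ with $\vlambdai(v)$ equal to $1$, $2$, $0$ respectively (i.e.\ they record the jump position, not the maximal jump). So the concluding sentence of your proposal is a genuine gap: either you must exhibit a source for these constants inside the operator computation (your sup-of-jumps analysis yields none), or you must confront and resolve the mismatch between that computation and the formulas as stated (for instance by reinterpreting the quantity $\vlambdai(\b_i)$ being asserted); you cannot simply declare that the claimed formulas ``will follow''. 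Note also that you flag the determination of $f_i$ as the main obstacle and the rest as routine, whereas in fact the $f_i$ are unproblematic and it is precisely this final step where your argument breaks down.
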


  \medskip
  We   need to relate, under these conventions, the valuations relative to the lattice chains $\L_i$ with the valuations over $F$ of the quadratic forms
$h(v, \b_i v)$, $v \in V^i$.

\begin{Lemma}  \label{LemmaA.4} 
Let $\mathbf e_i = e(E_i/F)$ and normalize the lattice sequence $\L^i$ such that 
its period over $F$ is $4$ and $d(\L^i)=1$.  
 For any $v \in V^i$   we have: 
 $$ \vlambdai (v) = 
2 \vF h(v, \b_i v) - \dfrac {2 \vEi \b_i}{\mathbf e_i} . $$ 
\end{Lemma}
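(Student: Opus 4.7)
The plan is to rewrite both sides of the claimed identity in terms of the skew-hermitian form $\d_i$ from \S\ref{SA.2} and then verify the resulting simpler statement by a short case analysis. First, I would apply relation~(\ref{skewform}) with $a=\b_i$ to obtain $h(v,\b_i v)=\trEiF(\b_i\,\d_i(v,v))$. Since $\overline{\b_i}=-\b_i$ and $\overline{\d_i(v,v)}=-\d_i(v,v)$, the product $\b_i\,\d_i(v,v)$ is fixed by the non-trivial involution of $E_i/F$, hence lies in $F$. As the residual characteristic is odd, $\vF h(v,\b_i v)$ equals $\vF(\b_i\,\d_i(v,v))$; combined with $\vEi|_F=\mathbf{e}_i\vF$ this yields
$$\vF h(v,\b_i v)=\frac{1}{\mathbf{e}_i}\bigl(\vEi(\b_i)+\vEi(\d_i(v,v))\bigr).$$
Substituting this into the right-hand side of the lemma reduces the assertion to
$$\vlambdai(v)=\frac{2\,\vEi(\d_i(v,v))}{\mathbf{e}_i}.$$

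Next, I would reduce to a single vector. Let $v_i$ be the $E_i$-basis of $V^i$ used in \S\ref{SA.2}, so that $\d_i(xv_i,yv_i)=u_i\,x\bar y$ with $\vEi(u_i)=1-d_i$. For $v=xv_i$ with $x\in E_i^\times$, one has $\vEi(\d_i(v,v))=\vEi(u_i)+2\vEi(x)$, since $\vEi(\bar x)=\vEi(x)$. On the other hand, each $\L^i(t)$ is an $\oEi$-module (it is built from the $\oEi$-lattice chain $L_i$ by reindexing and rescaling), and multiplication by $\pEi$ shifts $\vlambdai$ by $4/\mathbf{e}_i$ because the $\oF$-period of $\L^i$ is $4$ and its $\oEi$-period is accordingly $4/\mathbf{e}_i$. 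Hence
$$\vlambdai(xv_i)=\vlambdai(v_i)+\frac{4\vEi(x)}{\mathbf{e}_i}.$$
Combining these, the identity for arbitrary $v$ reduces to the single equality $\vlambdai(v_i)=2\vEi(u_i)/\mathbf{e}_i$.

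Finally, I would verify this last equality in the three normalizations listed in \S\ref{SA.2}. Choosing $v_i$ so that $L_i(0)=\oEi v_i$, the target values $2\vEi(u_i)/\mathbf{e}_i$ are $1$, $2$, and $0$ respectively. In the ramified case $\L^i=L'_i$ with $L'_i(t)=L_i([t/2])$, so $v_i\in\L^i(1)\setminus\L^i(2)$, giving $\vlambdai(v_i)=1$. In the unramified case with $d_i=0$, the further rescaling $\L^i=2L'_i$ (under the convention $(k\L)(t)=\L(\lceil t/k\rceil)$, which is the one compatible with $d(\L^i)=1$) yields $v_i\in\L^i(2)\setminus\L^i(3)$, giving $\vlambdai(v_i)=2$. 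In the unramified case with $d_i=1$, $\L^i=4L_i$ yields $v_i\in\L^i(0)\setminus\L^i(1)$, giving $\vlambdai(v_i)=0$. The only genuine obstacle is notational: once a convention for the rescaling operation $k\L$ is fixed and shown to preserve self-duality with invariant $1$, the three verifications are immediate from the explicit form of $\L^i$.
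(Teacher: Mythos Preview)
Your proof is correct and follows essentially the same route as the paper: both arguments use the identity $\vF h(v,\b_i v)=\frac{1}{\mathbf e_i}(\vEi\b_i+\vEi\d_i(v,v))$, reduce by $E_i^\times$-homogeneity to a single value, and then verify that value in each of the three normalization cases. The only cosmetic difference is that the paper checks, for one chosen $\vlambdai(v)$, the value of $\vEi\d_i(v,v)$ via duality, whereas you compute $\vlambdai(v_i)$ directly from the explicit description of $\L^i$; these are equivalent.
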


\begin{proof}
For $v$ in $V^i$ and $a$ in $E_i$ we have 
$h(av , \b_i av)= {\hbox{\rm N}}_{E_i/F}(a) h(v , \b_i v)$. 
The map 
$v \mapsto h(v , \b_i v)$ on $V_i$  is thus constant on the sets 
$\L_i(t) - \L_i(t+1)$ hence factors through the valuation $\vlambdai$: 
there is a map $\phi$, defined on the image of $\vlambdai$ and with values in $\BZ$, such that $\vF h(v, \b_i v) = \phi(\vlambdai(v))$ 
for any non zero $v$ in $V^i$.
Periodicity over $E_i$ implies 
$\phi(t+2\mathbf e_i)= \phi(t)+\mathbf e_i$, $t \in \BZ$: computing one value of $\phi$ is enough. We certainly have 
$\vF h(v, \b_i v)= \dfrac {1}{\mathbf e_i} (\vEi \b_i   + \vEi \d_i(v,v))$ whence: 
 \begin{enumerate}
  \item if $E_i$ is ramified over $F$, then $\L_i(1)=\L_i(0)=\L_i(1)^\#$ so   $\vlambdai(v)=1$ implies  $\vEi \d_i(v,v))=1$ and 
$\vF h(v, \b_i v)= \dfrac 1 2 ( \vEi \b_i   + 1)$;  
  \item if $E_i$ is unramified over $F$ and $\L_i$ contains a self-dual lattice then $\L_i(2)^\#=\L_i(-1)= \L_i(2)$ so   $\vlambdai(v)=2$ implies  $\vEi \d_i(v,v))=1$ and 
$\vF h(v, \b_i v)= \vEi \b_i   + 1 $;
   \item If $E_i$ is unramified over $F$ and $\L_i$ does not contain a self-dual lattice then $\L_i(0)^\#=\L_i(1)= \piF \L_i(0)$ 
   so   $\vlambdai(v)=0$ implies  $\vEi \d_i(v,v))=0$ and 
$\vF h(v, \b_i v)= \vEi \b_i     $. 
  \end{enumerate}
\end{proof}

\subsection{Some intersections}\label{SA.3} 

The following Lemma and Corollary were suggested by  
Vytautas Paskunas. In the proofs, we may (and often will) ignore the
condition that the stratum and character be skew, since the results
in the skew case follow immediately by restriction to $G$ -- that is,
we actually prove the statements for $\s$-stable semisimple strata
in $\GL_4$. To ease notation, this will be implicit so that, in the
proofs below, $U$ should be a $\s$-stable maximal unipotent subgroup of
$\GL_4$, etc.

\medskip

First we need some notation in the semisimple
case. For $[\L,n,0,\b]$ a semisimple stratum in $V$ with splitting
$V=\bigoplus_{i=1}^2 V^i$, we write $A=\oplus_{i,j=1}^2 A^{ij}$ in
block notation, where $A^{ij}=\Hom_F(V^j,V^i)=\boI^j A \boI^i$ and
$\boI^i$ is the projection onto $V^i$ with kernel $V^{3-i}$. For
$\bk=(k_1,k_2)\in\BZ^2$, with $k_1\ge k_2\ge 1$, define
$$
\fa_\bk=\fa_\bk(\L) = \begin{pmatrix} \fa^{11}_{k_1} & \fa^{12}_{k_1}
  \\ \fa^{21}_{k_1} & \fa^{22}_{k_2} \end{pmatrix}
$$
and $\U^\bk(\L)=1+\fa_\bk$.

\begin{Lemma}\label{maxdecomp} Let $[\L,n,0,\b]$ be a semisimple stratum in
$A$, with splitting $V=\bigoplus_{i=1}^l V^i$, $1\le l\le 2$. Suppose
also that \emph{$F[\b]$ is of maximal degree over $F$}, and let $U$ be a
maximal unipotent subgroup of $G$. Write $B$ for the centralizer in $A$ of
$\b$. For $k\ge m\ge 1$, we have 
$$
\left((\U^m(\L)\cap B)\U^k(\L)\right)\cap U = (\U^k(\L)\cap U).
$$ 
\end{Lemma}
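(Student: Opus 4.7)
The inclusion $\supseteq$ is immediate (take $b=1$). For the reverse inclusion, I would argue by downward induction on $m$, keeping $k$ fixed. The base case $m=k$ is trivial since $\U^k(\L)\cap B\subseteq \U^k(\L)$. For the inductive step, assume $1\le m<k$ and the statement is known for $m+1$. Given $g=bu\in U$ with $b\in\U^m(\L)\cap B$ and $u\in\U^k(\L)$, it suffices to show that $b\in\U^{m+1}(\L)\cap B$: the inductive hypothesis then yields $g\in\U^k(\L)\cap U$, as desired.

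Set $x=b-1\in\fa_m(\L)\cap B$ and $v=u-1\in\fa_k(\L)$, so that $g-1=x+v+xv$. Since $g\in U$ is a unipotent element of $\GL_4(F)$, $(g-1)^4=0$. Expanding $(x+v+xv)^4$ as a sum of length-four non-commutative monomials in $\{x,v,xv\}$, each monomial containing at least one factor of $v$ or of $xv$ lies in $\fa_{3m+k}(\L)\subseteq \fa_{4m+1}(\L)$, using $k\ge m+1$. Therefore $x^4\in\fa_{4m+1}(\L)$.

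It remains to deduce $x\in\fa_{m+1}(\L)$ from $x\in B$ and $x^4\in\fa_{4m+1}(\L)$. The maximal-degree assumption forces $B$ to split as a direct sum of fields $B=\bigoplus_{i=1}^l E_i$, with each $V^i$ one-dimensional over $E_i$. Writing $x=(x_1,\ldots,x_l)$ and noting that $\L=\bigoplus_i\L^i$ respects the splitting, one checks that the restriction of $\nu_\L$ to each $E_i$-component is a positive integer multiple of $\nu_{E_i}$, and
$$
\nu_\L(x^n)\ =\ \min_{1\le i\le l}\nu_\L(x_i^n)\ =\ n\,\min_{1\le i\le l}\nu_\L(x_i)\ =\ n\,\nu_\L(x)
$$
for every $n\ge 1$. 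Applied with $n=4$, this gives $4\nu_\L(x)\ge 4m+1$, hence $\nu_\L(x)\ge m+1$ since $\nu_\L(x)\in\BZ$.

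The main obstacle is precisely the multiplicativity $\nu_\L(x^n)=n\nu_\L(x)$ on $B$, which rests entirely on the maximal-degree hypothesis: this is what guarantees that $B$ is a product of fields with no nonzero nilpotents, so that $\nu_\L$ behaves on each component as a genuine discrete valuation. Without this assumption, the conclusion $x^4\in\fa_{4m+1}(\L)\Rightarrow x\in\fa_{m+1}(\L)$ would fail in general.
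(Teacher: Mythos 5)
Your proof is correct and follows essentially the same route as the paper: the same induction passing from $m$ to $m+1$, and the same trick of expanding a power of the nilpotent element $g-1$ so that every cross term lies in $\fa_{4m+1}(\L)$, leaving $x^4\in\fa_{4m+1}(\L)$ (the paper phrases this additively, with $\epsilon\in\fb_m$ and $\epsilon^s\in\fa_{sm+1}$). The only difference is the final descent: where the paper invokes Bushnell's result to conclude that the coset $\epsilon+\fb_{m+1}$ contains a nilpotent element, necessarily $0$ since $B$ is a sum of fields, you argue directly that $\nu_\L$ is multiplicative on $B$ because each $\L^i$ is an $\oEi$-lattice sequence and $V^i$ is one-dimensional over $E_i$ -- a correct, self-contained alternative to that citation.
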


We remark that the proof below is just for the case which
interests us here (so that there are at most $2$ pieces in the splitting)
but it is straightforward to generalize the Lemma to the case where
there are an arbitrary number of pieces.

\begin{proof} We will prove the corresponding additive statement.
Writing $U=1+\BN$, $\fb_m = \fa_m (\L) \cap B$, it is:
$$
(\fb_m+\fa_k)\cap\BN = \fa_k\cap\BN.
$$
Notice that, since $B=F[\b]$, the lattice $\fb_m$ contains no
non-trivial nilpotent elements. We will show that, for $1\le m< k$,
$$
(\fb_{m}+\fa_k)\cap\BN \subset (\fb_{m+1}+\fa_k)\cap\BN
$$
and the result follows at once by an easy induction. So
suppose $\epsilon\in\fb_m$ is such that $(\epsilon+\fa_{k})\cap\BN
\ne \emptyset$. In particular, $(\epsilon+\fa_{m+1})\cap\BN
\ne \emptyset$ so there exists $s>0$ such that
$\epsilon^s\in\fa_{sm+1}$. But then $\epsilon^s\in\fb_{sm+1}$ so (by
\cite{Bu}, working block-by-block), the coset $\epsilon+\fb_{m+1}$
contains a nilpotent element, which must be $0$. We deduce that
$\epsilon\in\fb_{m+1}$, as required.
\end{proof}

For $[\L,n,0,\b]$ a semisimple stratum in $V$ with splitting
$V=\bigoplus_{i=1}^2 V^i$, we write
$M_{sp}=\Aut_F(V^1)\times\Aut_F(V^2)$, $U_{sp}=1+A^{12}$, 
$P_{sp}=M_{sp}U_{sp}$ and $\bar U_{sp}=1+A^{21}$. 

\begin{Corollary}\label{decomp} Let $[\L,n,0,\b]$ be a semisimple stratum in
$A$, with splitting $V=\bigoplus_{i=1}^l V^i$, $1\le l\le 2$, and let
$U$ be a maximal unipotent subgroup of $G$. Write $B$ for the
centralizer in $A$ of $\b$. We suppose that
\begin{enumerate}
\item $U$ has an Iwahori decomposition with respect to $(M_{sp},P_{sp})$;
\item $U\cap B^\times$ is a maximal unipotent subgroup of $B^\times$.
\end{enumerate}
Then, for $k_1\ge \cdots\ge k_l\ge m\ge 1$ and $\bk=(k_1,\dots,k_l)$,
we have
\begin{equation}\label{multequation}
\left((\U^m(\L)\cap B)\U^\bk(\L)\right)\cap U = (\U^m(\L)\cap B\cap U)
(\U^\bk(\L)\cap U).
\end{equation}
\end{Corollary}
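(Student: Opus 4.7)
The plan is to combine the Iwahori decomposition provided by hypothesis~(1) with a block-wise appeal to Lemma~\ref{maxdecomp}. The inclusion $\supseteq$ is immediate from $\U^m(\L)\cap B\cap U\subseteq\U^m(\L)\cap B$ and $\U^\bk(\L)\cap U\subseteq\U^\bk(\L)$, so the content is in the opposite inclusion.

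First I would take $u\in((\U^m(\L)\cap B)\U^\bk(\L))\cap U$ and write $u=bv$ with $b\in\U^m(\L)\cap B$ and $v\in\U^\bk(\L)$. Since $\b$ preserves the splitting $V=V^1\oplus V^2$, the centralizer $B$ sits inside $M_{sp}$, so $b\in M_{sp}$. The block structure of $\fa_\bk$ affords an Iwahori decomposition $v=v_-v_0v_+$ with $v_-\in\bar U_{sp}\cap\U^\bk(\L)$, $v_0\in M_{sp}\cap\U^\bk(\L)$, and $v_+\in U_{sp}\cap\U^\bk(\L)$. Rearranging,
$$
u = (bv_-b^{-1})\,(bv_0)\,v_+,
$$
and one verifies that $bv_-b^{-1}$ still lies in $\bar U_{sp}\cap\U^\bk(\L)$ since $b\in P(\L)\cap M_{sp}$ normalizes both the lattice sequence $\L$ and the off-diagonal block $\fa^{21}_{k_1}$. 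Hypothesis~(1) supplies the Iwahori decomposition $u=u_-u_0u_+$ with factors in $U$, and by uniqueness of the Iwahori decomposition in $G$ relative to $(M_{sp},P_{sp})$ these factors must coincide with the rearrangement above: $u_-=bv_-b^{-1}$, $u_0=bv_0$, $u_+=v_+$, each lying in $U$. In particular $u_-$ and $u_+$ already belong to $\U^\bk(\L)\cap U$.

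It then remains to place $u_0=bv_0\in M_{sp}\cap U$ inside the right-hand side. Decomposing componentwise along $V=V^1\oplus V^2$, one writes $u_{0,i}=b_iv_{0,i}$ with $b_i\in\U^m(\L^i)\cap B_i$ and $v_{0,i}\in\U^{k_i}(\L^i)$, the ambient unipotent group being $U\cap\Aut_F(V^i)$, itself a maximal unipotent subgroup of $\Aut_F(V^i)$ by the Iwahori decomposition of (1). Hypothesis~(2) together with the product decomposition $B^\times=B_1^\times\times B_2^\times$ forces each $U\cap B_i^\times$ to be maximal unipotent in $B_i^\times$, and hence $F[\b_i]$ to be of maximal degree in $\End_F(V^i)$. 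Lemma~\ref{maxdecomp} applied block-wise to the simple stratum $[\L^i,n_i,0,\b_i]$ (the case $l=1$) then yields $u_{0,i}\in\U^{k_i}(\L^i)\cap U$. Combining the two pieces gives $u_0\in\U^\bk(\L)\cap M_{sp}\cap U$, whence $u=u_-u_0u_+\in\U^\bk(\L)\cap U$, which is trivially contained in the right-hand side.

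The main obstacle is the conjugation-and-uniqueness step: I have to check carefully that $bv_-b^{-1}$ stays inside $\bar U_{sp}\cap\U^\bk(\L)$, and then invoke uniqueness of Iwahori decomposition to identify the rearranged triple with the honest Iwahori components of $u$ — this is the one place where hypothesis~(1) enters essentially. Once that identification is made, the reduction to Lemma~\ref{maxdecomp} in each block is essentially bookkeeping; the only additional point to verify is that hypothesis~(2) really does propagate maximality of $F[\b]$ to each $F[\b_i]$, which comes from the compatibility of maximal unipotent subgroups under direct-product decompositions of reductive groups.
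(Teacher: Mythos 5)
Your first step (splitting off the $\bar U_{sp}$ and $U_{sp}$ parts using hypothesis (1) and reducing to the Levi $M_{sp}$) is sound and is essentially the paper's own first reduction. The gap comes immediately after, at the sentence claiming that hypothesis (2), via $B^\times=B_1^\times\times B_2^\times$, ``forces $F[\b_i]$ to be of maximal degree in $\End_F(V^i)$''. This is a non sequitur: hypothesis (2) is a condition on the \emph{position} of $U$ relative to $B^\times$, not on the size of $B$. The corollary is needed in this paper precisely in situations where $F[\b]$ is \emph{not} maximal block-by-block: in the simple case (II), $E=F[\b]$ is quadratic with $\dim_E V=2$, so $B^\times\cong\GL_2(E)$ and hypothesis (2) asserts that $U\cap B^\times$ is a nontrivial maximal unipotent subgroup of it; in the degenerate case (IV), $\b_2=0$ and $B_2=\End_F(V^2)$. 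In these blocks Lemma~\ref{maxdecomp} simply does not apply, because its proof rests on $\fb_m$ containing no nontrivial nilpotent elements, which fails when $B_i$ is not a field. A symptom that something has gone wrong is that your argument ends with the left-hand side equal to $\U^\bk(\L)\cap U$, with no factor $\U^m(\L)\cap B\cap U$ at all; but when $U\cap B^\times\ne 1$ and $k_i>m$, the elements of $\U^m(\L)\cap B\cap U$ lie in the left-hand side of \eqref{multequation} without lying in $\U^\bk(\L)$, so the stronger equality you derive is actually false, and the extra factor on the right of \eqref{multequation} is genuinely needed.

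What is missing is a second reduction inside each block handling the non-maximal case, and this is where hypothesis (2) really enters. The paper proves the additive statement $(\fb_m+\fa_\bk)\cap\BN=(\fb_m\cap\BN)+(\fa_\bk\cap\BN)$ for a \emph{simple} stratum as follows: hypothesis (2) forces the flag defining $U$ to refine an $E$-flag (the subspaces $V_{di}$, $d=[E:F]$, are $E$-subspaces); one takes the parabolic $\BP_0$ stabilizing that $E$-flag, with a Levi $\BL_0$ given by an $E$-decomposition of $V$ splitting $\L$, and uses the additive Iwahori decompositions of $\fb_m$ and $\fa_\bk$ with respect to $(\bar\BN_0,\BP_0)$. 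The pieces $\fb_m\cap\BN_0$ are exactly what produces the factor $\U^m(\L)\cap B\cap U$ on the right-hand side, and only in the blocks of $\BL_0$, where $E$ \emph{is} maximal, does Lemma~\ref{maxdecomp} get invoked. Without this step your argument covers only the situation where $B=F[\b]$ in every block, which excludes the cases (II) and (IV) for which the corollary is actually used.
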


Again the statement is easily generalized to the case when the
splitting has more than two pieces.
Note also that, in the simple case, condition (i) is empty while
condition (ii) is implied, for example, by the condition
$\psi_\b|_{U_{der}}=1$ (\cite{BH} Proposition 2.2). This is not true for semisimple strata.

\begin{proof} First we reduce to the simple case. We notice that
$\U^\bk(\L)$ and $B^\times \subset M_{sp}$ have Iwahori decompositions with
respect to $(M_{sp},P_{sp})$. Since $U$ also has such a decomposition, we
reduce to proving that we have equality in \eqref{multequation} when we
intersect both sides with $U_{sp}$, with $M_{sp}$ and with $\bar U_{sp}$
respectively. Since $B\subset M_{sp}$, this is immediate for the
unipotent radicals $\bar U_{sp}$, $U_{sp}$. Hence we are reduced to the
intersection with $M_{sp}$, which, block-by-block, is just the simple case.

So now suppose $[\L,n,0,\b]$ is a simple stratum. As in Lemma \ref{maxdecomp},
we will prove the corresponding additive statement: 
writing $U=1+\BN$, it is:
\begin{equation}\label{adddecomp}
(\fb_m+\fa_\bk)\cap\BN = (\fb_m\cap\BN) + (\fa_\bk\cap\BN),
\end{equation}
where $k=k_1$. We will reduce to the case where $E=F[\b]$ is maximal
and invoke Lemma \ref{maxdecomp}. 

Write $d=[E:F]$; then, in the flag corresponding to $U$,
$$
\{0\}=V_0 \subset V_1\subset \cdots\subset V_N=V,
$$
the subspace $V_{di}$ is an $E$-subspace, for $0\le i\le N/d$.
Let $U_0=1+\BN_0$ be the unipotent subgroup corresponding to the
maximal $E$-flag 
$$
\{0\}=V_0\subset V_d\subset \cdots\subset V_{di}\subset\cdots\subset V_N=V
$$
and let $P_0=1+\BP_0$ the corresponding parabolic subgroup. 
There exists an  $E$-decomposition   $V= \oplus_{i=1}^{N/d} \, W_i$ of $V$ 
such that  for $0\le i\le N/d$,  $V_{di}= \oplus_{j=1}^{i} W_j$ 
and such that for every $t \in \mathbb Z$, 
$\L(t) = \oplus_{i=1}^{N/d} \, \L(t) \cap W_i$ (as a suitable variant of, 
e.g., \cite{W} \S II.1).  
Let $L_0$ be the corresponding Levi component of $P_0$    and let $\bar U_0=1+\bar{\BN}_0$ be the
unipotent subgroup opposite $U_0$ with respect to $L_0$. 

The lattices $\fb_m$ and $\fa_k$ have (additive) Iwahori
decompositions with respect to $\bar{\BN}_0,\BP_0$ (\cite{BH0} \S 10) so  
$$
(\fb_m+\fa_k)\cap \BP_0 = 
\fb_m\cap\BL_0 + \fb_m\cap\BN_0 + 
\fa_k\cap\BL_0+ \fa_k\cap\BN_0
$$
and we have
$$
(\fb_m+\fa_k)\cap \BN = 
(\fb_m+\fa_k)\cap \BP_0\cap\BN = 
(\fb_m\cap\BL_0 + \fa_k\cap\BL_0)\cap\BN +
\fb_m\cap\BN_0 + \fa_k\cap\BN_0
$$
Hence we are reduced to showing 
$$
(\fb_m\cap\BL_0 + \fa_k\cap\BL_0)\cap\BN = 
\fb_m\cap\BL_0\cap\BN + \fa_k\cap\BL_0\cap\BN,
$$
which is the same as \eqref{adddecomp} in $\BL_0$, where we can work
block-by-block. In each block, the field extension $E$ is maximal, so
we have indeed reduced to the maximal case, which is Lemma \ref{maxdecomp}. 
\end{proof}

Finally, we will need one more similar result, in the case of a
minimal semisimple stratum -- in this case the extra conditions of the
previous  Corollary are not satisfied.

\begin{Lemma}\label{ssdecomp} 
Let $[\L,n,0,\b]$ be a skew semisimple stratum, with splitting
$V=V^1\bigoplus V^2$, $1\le l\le 2$. Suppose that $\b_i=\boI^1\b\boI^i$ is
minimal, for $i=1,2$ and that $[\L,n,n-1,\b]$ is \emph{not} equivalent
to a simple stratum. Assume also (without loss of generality) that $n_1\ge n_2\ge 1$. 
Put $k_i=[\frac{n_i}2]+1$, for $i=1,2$, and
$\bk=(k_1,k_2)$. Then $H^1=(\U^1(\L)\cap B)\U^\bk(\L)$ and, if $U$ is 
a maximal unipotent subgroup
  of $G$ on which $\psi_\b$ defines a character:  
$$
H^1\cap U=\U^\bk(\L)\cap U.
$$
\end{Lemma}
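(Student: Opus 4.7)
The plan is to establish the two identities separately. For $H^1 = (\U^1(\L)\cap B)\U^\bk(\L)$, this is the base case of the recursive construction of $\fh^1(\b,\L)$ for skew semisimple strata in~\cite{S4}: since each $\b_i$ is minimal and $[\L,n,n-1,\b]$ is not equivalent to a simple stratum, the recursion terminates immediately with $\fh^1(\b,\L) = (\fa_1(\L)\cap B) + \fa_\bk(\L)$; exponentiating and intersecting with $G$ gives the multiplicative statement.

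For the second and main identity $H^1\cap U = \U^\bk(\L)\cap U$, I would pass to the corresponding additive statement that $(\fb_1 + \fa_\bk)\cap\BN \subseteq \fa_\bk$, where $\fb_m = \fa_m(\L)\cap B$ and $\BN = U - 1$. Following the strategy of Lemma~\ref{maxdecomp}, the idea is to prove by induction that $(\fb_m+\fa_\bk)\cap\BN \subseteq (\fb_{m+1}+\fa_\bk)\cap\BN$ for $1\le m < k_1$, iterated until $\fb_{k_1}\subseteq\fa_\bk$ absorbs the $\fb$-part. For the inductive step, suppose $\epsilon\in\fb_m$ satisfies $\epsilon+\delta\in\BN$ for some $\delta\in\fa_\bk$. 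Since elements of $\BN$ are nilpotent of order at most $4$, $(\epsilon+\delta)^4 = 0$, so expanding yields $\epsilon^4 + X = 0$, where $X$ is a sum of noncommutative products each containing at least one factor of $\delta$. Valuation estimates using $\epsilon\in\fb_m$ and the blockwise structure of $\fa_\bk$ place $X$, and hence $\epsilon^4$, in $\fa_s(\L)$ for some $s$ strictly larger than $4m$; since $\epsilon\in B$, this improves to $\epsilon^4\in\fb_s$. Applying the argument of~\cite{Bu} blockwise to the semisimple algebra $B = F[\b_1]\oplus F[\b_2]$ then shows that the coset $\epsilon + \fb_{m+1}$ contains a nilpotent element of $B$; since $B$ is a sum of fields, that nilpotent is zero, so $\epsilon\in\fb_{m+1}+\fa_\bk$, completing the induction.

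The main obstacle will be managing the asymmetry of $\fa_\bk$, whose diagonal blocks carry the two distinct levels $k_1 \ge k_2$, together with the fact that in case~III the flag defining $U$ need not respect the splitting $V = V^1\perp V^2$; in particular, the clean Iwahori-type decomposition that enabled the blockwise reduction in Corollary~\ref{decomp} is unavailable here, and the valuation estimates on cross terms have to be carried out carefully for each block. The hypothesis that $[\L,n,n-1,\b]$ is not equivalent to a simple stratum is doing real work: it ensures that $B = F[\b_1]\oplus F[\b_2]$ is genuinely a direct sum of two fields (with no nonzero nilpotent elements), which is exactly the input required to convert the nilpotency constraint on $\epsilon + \delta$ into an upgrade of the filtration level of $\epsilon$.
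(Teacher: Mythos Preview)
Your treatment of the identity $H^1=(\U^1(\L)\cap B)\U^\bk(\L)$ is fine; this is indeed immediate from the definition once one notes that each $\b_i$ is minimal and the stratum is already semisimple at the top level. The gap is in your inductive valuation estimate for the second identity. Once you reduce (as you should) to $\epsilon=\epsilon_1\in\fb_m^{11}$ for $k_2\le m<k_1$, the expansion of $(\epsilon+\delta)^4=0$ contains the term $\delta^4$, whose $(1,1)$-block lies only in $\fa_{2k_1+2k_2}$: the worst path $1\to 2\to 2\to 2\to 1$ picks up just two factors of $\fa_{k_1}$ and two of $\fa_{k_2}$. For $m$ close to $k_1$ --- already for $m=k_1-1$ whenever $k_1-k_2\ge 2$ --- one has $2k_1+2k_2\le 4m$, so you cannot conclude $\epsilon^4\in\fa_{4m+1}$ and the induction stalls at $m<\tfrac{k_1+k_2}{2}$ rather than reaching $k_1$. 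The purely abstract nilpotency of $\epsilon+\delta$ is not enough; you need to exploit the specific shape of $\BN$.

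The paper's argument proceeds differently. After first applying Lemma~\ref{maxdecomp} (with $\fa_{k_2}$ in place of $\fa_\bk$) to reduce to $k_2\le m<k_1$, it does not expand a power at all. Instead it writes down, using the classification of admissible flags in case~(III), an explicit symplectic basis $\{v_1,v_{-1},v_2,v_{-2}\}$ in which every element of $\BN$ has a fixed pattern of repeated entries; in particular, each scalar appearing in the $(1,1)$-block of $x\in\BN$ reappears in an off-diagonal block. The auxiliary Lemma~6.5 computes $\vlambda(E_{1,2})=\nu$ and $\vlambda(E_{-2,-1})=n_1-n_2-\nu$, with $0\le\nu\le n_1-n_2$, and this lets one transport the constraints $x_{12},x_{21}\in\fa_{k_1}$, $x_{22}\in\fa_{k_2}$ directly to the individual entries $a,b,e$ of the $(1,1)$-block, forcing them into $\fa_{k_1+t}\subseteq\fa_{m+1}$ for some $t\ge 0$. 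Only the entry $-d$ survives modulo $\fa_{m+1}$, and it is visibly nilpotent; the~\cite{Bu} step then finishes exactly as in Lemma~\ref{maxdecomp}. The point is that the linear relations among the entries of $\BN$ --- which encode the flag structure determined by $\psi_\b|_{U_{\mathrm{der}}}=1$ --- are what compensate for the gap between $k_2$ and $k_1$, and these relations are invisible to a bound that uses only $\delta\in\fa_\bk$.
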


\begin{proof} We work under the conventions made in the previous subsection. The
 subgroup $U$
is necessarily given (see Proposition \ref{Uderprop}) by a flag of the
following form: 
$$
\{0\} \subset \la v_1+v_2 \ra \subset \la v_1+v_2, v_{-1}-v_{-2} \ra 
\subset \la v_1,v_2,v_{-1}-v_{-2}\ra \subset V, 
$$
where $v_i\in V^i$ are non zero vectors  such that
$ 
h(v_1,\b_1v_1)=-h(v_2,\b_2v_2) =\mu 
$  
and  $v_{-1}=\mu^{-1}\b_1 v_1$, $v_{-2}=-\mu^{-1}\b_2 v_2$. In
particular $\{v_i,v_{-i}\}$ is a symplectic basis for $V^i$, adapted to 
$\L^i$. 

\medskip
That $H^1=(\U^1(\L)\cap B)\U^\bk(\L)$ is clear from the
definition. For the intersection property, we will need:

\begin{Lemma}  \label{LemmaA.5} 
 Put $\nu=\vlambdaone(v_1)- \vlambdatwo(v_2)$ and let $E_{i,j}$ be the linear map sending $v_j$ to $v_i$ and all other basis elements to $0$. 
 We have 
 \begin{equation}\label{size}
\vlambda(E_{1,2})=  \nu,\qquad \vlambda(E_{-2,-1}) =n_1-n_2-\nu.
\end{equation}
Furthermore, if $n_1 \ge n_2$, then 
$ 0 \le \nu \le n_1-n_2$.
\end{Lemma}

\begin{proof}
Computing the valuations of $E_{1,2}$ and $E_{-2,-1}$ is simple
checking. Lemma \ref{LemmaA.4}  
combined with our asssumption on $v_1$, $v_2$ implies $\nu =  - \dfrac
{2 \vEone \b_1}{\mathbf e_1} 
 + \dfrac {2 \vEtwo \b_2}{\mathbf e_2}$. 
The inequality $ 0 \le \nu \le n_1-n_2$ then follows from Lemma
\ref{LemmaA.3}.
\end{proof}

Write $U=1+\BN$. Then the elements of $\BN$ can be
written (as matrices with respect to the basis $\{v_1,v_{-1},v_2,v_{-2}\}$)
$$
x=\begin{pmatrix} 
a&e&-a&c\\ -d&-b&d&-b\\ a&e+f&-a&c+f\\ d&b&-d&b 
\end{pmatrix},
$$
for $a,b,c,d,e,f\in F$. 

Now suppose that $x$ as above also lies in the lattice (in block
matrix form, each block $2\times 2$)
$$
\begin{pmatrix} \fa_{m} & \fa_{k_1} \\ \fa_{k_1} & \fa_{k_2} \end{pmatrix}, 
$$
for some $m < k_1$. Then, using~\eqref{size}, we get:
\begin{itemize}
\item $aE_{2,1}\in\fa_{k_1}$ so $aE_{1,1}=aE_{1,2} E_{2,1} \in\fa_{k_1+\nu}$;
\item $bE_{-1,-2}\in\fa_{k_1}$ so $bE_{-1,-1}=bE_{-1,-2}E_{-2,-1}
\in\fa_{k_1+n_1-n_2-\nu}$; 
\item $(c+f)E_{2,-2}\in\fa_{k_2}$ so
$(c+f)E_{1,-2}=(c+f)E_{1,2}E_{2,-2}\in\fa_{k_2+\nu}$;  

$cE_{1,-2}\in\fa_{k_1}$  so $fE_{1,-2}\in\fa_{\text{min}\{k_1 , k_2+\nu\}}$ 

and $fE_{1,-1}=fE_{1,-2}E_{-2,-1}\in\fa_{\text{min}\{k_1+n_1-n_2-\nu,
k_2+n_1-n_2\}}$. 

$(e+f)E_{2,-1}\in\fa_{k_1}$ so
$(e+f)E_{1,-1}=(e+f)E_{1,2}E_{2,-1}\in\fa_{k_1+\nu}$. 
\end{itemize}
Writing $t=\min\{\nu,n_1-n_2-\nu, k_2+n_1-n_2-k_1\}\ge 0$, we have
$aE_{1,1},bE_{-1,-1},eE_{1,-1}\in\fa_{k_1+t}\subset\fa_{m+1}$. In
particular, looking at the ``top-left'' block of $x$, we have 
$$
\begin{pmatrix} a & e \\ -d & -b \end{pmatrix}
\in 
\begin{pmatrix} 0&0\\ -d&0 \end{pmatrix} + \fa_{m+1}.
$$

Now we prove that, for $m\le k_2$ and $\bok=(k_1,k_2)$, 
$$
\left((\U^m(\L)\cap B)\U^\bok(\L)\right)\cap U = \U^{\bok}(\L)\cap U.
$$
By Lemma~\ref{maxdecomp}, we have
$$
\left((\U^m(\L)\cap B)\U^\bok(\L)\right)\cap U \subset 
\left((\U^m(\L)\cap B)\U^{k_2}(\L)\right)\cap U = \U^{k_2}(\L)\cap U.
$$
Then we need only prove, for $k_2\le m<k_1$ (the additive statement)
$$
\begin{pmatrix} \fb_m+\fa_{k_1} & \fa_{k_1} \\ \fa_{k_1} & \fa_{k_2}
\end{pmatrix} \cap \BN \subset 
\begin{pmatrix}  \fb_{m+1}+\fa_{k_1} & \fa_{k_1} \\ \fa_{k_1} &
\fa_{k_2}\end{pmatrix}.
$$
But we have seen above that, if
$\epsilon\in\begin{pmatrix}\fb_m&0\\0&0\end{pmatrix}$ is such that
$\epsilon+\fa_{\bok}$ contains an element of $\BN$, then
$\epsilon+\begin{pmatrix}\fa_{m+1}&0\\0&0\end{pmatrix}$ also contains
a nilpotent element. But then (as in the proof of
Lemma~\ref{maxdecomp})
$\epsilon+\begin{pmatrix}\fb_{m+1}&0\\0&0\end{pmatrix}$ also contains
a nilpotent element, which must be $0$. Hence $\epsilon\in\fb_{m+1}$,
as required.
\end{proof}

\subsection{Proof of Proposition~\ref{thetaagreeswithpsi}}\label{SA.4}

It remains to prove Proposition~\ref{thetaagreeswithpsi}: If
$\th\in\Clob$ is a skew semisimple character and $U$ is a 
maximal unipotent subgroup of $G$ such that $\psi_\b|_{U_{der}}=1$, then
$$
\th|_{H^1\cap U} = \psi_\b|_{H^1\cap U}.
$$
As in the previous section, we actually prove the same statement for
$\s$-stable semisimple strata in $\GL_4$, and the result follows by
restriction to $G$. We remark that a skew element $\b$ 
generating a field such that $[F[\b]:F]=2$ is necessarily 
minimal. We proceed on a case-by-case basis:

{\bf Simple case} (Cases (I) and (II))
We proceed by induction on $r=-k_0(\b,\L)$. 
We make the following additional hypothesis:
\begin{itemize}
\item[($*$)] there exists a simple stratum $[\L,n,r,\g]$ equivalent
to $[\L,n,r,\b]$ such that $\psi_\g|_{U_{der}}=1$.
\end{itemize}
In particular, we can then use the inductive hypothesis for the
stratum $[\L,n,0,\g]$ with the same unipotent subgroup $U$. Note that
this hypothesis is certainly satisfied when $\b$ is minimal, since we
can take $\g=0$. We will show later that it is also satisfied in the
other cases that are relevant to us here.

We have $H^1(\b,\L)=(\U^1(\L)\cap B)H^{[\frac r2]+1}(\b,\L)$ so, by
Corollary~\ref{decomp}, $H^1\cap U=H^{[\frac r2]+1}\cap U$. Moreover,
$H^{[\frac r2]+1}(\b,\L)=H^{[\frac r2]+1}(\g,\L)$ and 
$\th|_{H^{[\frac r2]+1}}=\th_0|_{H^{[\frac r2]+1}} \psi_{\b-\g}$, for 
some $\th_0\in\CC(\L,0,\g)$. But, by the inductive hypothesis,
$\th_0$ agrees with $\psi_\g$ on $H^{[\frac r2]+1}\cap U$ and the
result follows.

To finish, we must show that ($*$) is satisfied. The only case to
consider is when $[\L,n,0,\b]$ is a skew simple stratum with $F[\b]$
maximal (of degree $4$) and $r=-k_0(\b,\L)<n$. Then $[\L,n,r,\b]$ is
equivalent to some skew simple stratum $[\L,n,r,\g_0]$, with $\g_0$
\emph{minimal} and $F[\g_0]$ of degree $2$ over $F$. Note that, since
$p\ne 2$, all extensions are tame here. Note also that the
flag corresponding to the unipotent subgroup $U$ is given by $V_i=\la
v,\b v,\cdots, \b^{i-1} v\ra$, for some $v\in V$ with $h(v,\b
v)=0$. Also put $\zeta=h(\b v,\b^2 v)\ne 0$, and $d=-v_F(\zeta)$.

Let $P(X)=X^2+\l\in F[X]$ be the minimal polynomial of $\g_0$ and put
$e_{-2}=v$, $e_{-1}=\b v$, $e_1=\zeta^{-1}P(\b) v$, $e_2=-\b
\zeta^{-1}P(\b)v-k\b v$, where $k=\zeta^{-2} h(\b v,\b^2 v)-2\l\zeta^{-1}$
(this is a symplectic basis). With respect to this basis, $\b$ has
matrix
$$
\begin{pmatrix} 0&-\l&0&\mu\\ 1&0&k&0 \\ 0&\zeta&0&\l \\ 0&0&-1&0 
\end{pmatrix}
$$
for some $\mu\in F$. For $i,j\in\{\pm 1,\pm 2\}$, write $E_{i,j}$ for
the linear map sending $e_j$ to $e_i$ and all other basis vectors to $0$.
We then have
$$
E_{-2,-1}\in\fa_{n},\qquad E_{1,-1}\in\fa_{de-r}.
$$

Write $\b=\g_0+c_0$, with $c_0\in\fa_{-r}$. Then $\b^2+\l=c_0\g_0+\g_0c_0+c_0^2\in\fa_{-n-r}$. From the matrix description of $\b$, we get that
$$
\b^2+\l=\begin{pmatrix} 0&0&-(\mu+k\l)&0\\ 0&k\zeta&0&(\mu+k\l) \\ 
\zeta&0&k\zeta&0 \\ 0&-\zeta&0&0 
\end{pmatrix}
$$
Then:
\begin{itemize}
\item $(\mu+k\l)E_{-1,2}\in\fa_{-n-r}$ and
  $(\mu+k\l)E_{-2,2}=(\mu+k\l)E_{-2,-1}E_{-1,2}\in\fa_{-r}$; 
\item $\zeta E_{1,-1}\in\fa_{-r}$.
\end{itemize}
Hence 
$$
c=\begin{pmatrix} 0&0&0&\mu+k\l\\ 0&0&0&0 \\ 0&\zeta&0&0 \\ 0&0&0&0 
\end{pmatrix} \in \fa_{-r}.
$$
Put $\g=\b-c$. Then $[\L,n,r,\g]$ is a stratum as required by
hypothesis ($*$).

\medskip

{\bf Minimal semisimple case} (Case (III))
Now suppose $[\L,n,0,\b]$ is a skew semisimple stratum, with splitting
$V=V^1\bigoplus V^2$, $1\le l\le 2$, that $\b_i=\boI^1\b\boI^i$ is
minimal, for $i=1,2$ and that $[\L,n,n-1,\b]$ is \emph{not} equivalent
to a simple stratum. Assume also (without loss of generality) that $n_1\ge n_2\ge 1$. 
Put $k_i=[\frac{n_i}2]+1$, for $i=1,2$, and
$\bk=(k_1,k_2)$. Then, by Lemma~\ref{ssdecomp}, $H^1=(\U^1(\L)\cap B)\U^\bk(\L)$ and $H^1\cap U=\U^\bk(\L)\cap U$. 
But, by definition of $\th$, it agrees with $\psi_\b$ on
$\U^\bk(\L)$ so the result follows.

\medskip

{\bf Degenerate semisimple case} (case (IV))
Now suppose $[\L,n,0,\b]$ is again a skew semisimple stratum, but with
$\b_2=0$. In this case it is straightforward to see that the flag
defining $U$ must be given by
$$
\{0\} \subset V_1=\la v_2\ra \subset V_2 = \la v_2,v_1\ra \subset
V_3=V_1\oplus V^1 \subset V_4=V,
$$
where $v_2\in V^2$ and $v_1\in V^1$. In particular, $U$ satisfies the
extra conditions in Corollary \ref{decomp}. Putting
$k_1=[\frac{n_1}2]+1$ and $k_2=1$, we have $H^1=(\U^1(\L)\cap
B)\U^\bk(\L)$ so, by Corollary \ref{decomp}, 
$$
H^1\cap U=(\U^1(\L)\cap B\cap U)(\U^\bk(\L)\cap U) = \U^\bk(\L)\cap U.
$$
As in semisimple case (i), the result now follows by the definition of
semisimple characters.

\medskip

{\bf Non-minimal semisimple case} (case (III))
Finally, suppose $[\L,n,0,\b]$ is a skew semisimple stratum, with splitting
$V=V^1\bigoplus V^2$, $1\le l\le 2$, that $\b_i=\boI^1\b\boI^i$ is
minimal, for $i=1,2$ but that $[\L,n,n-1,\b]$ \emph{is} equivalent
to a simple stratum. Let $r=-k_0(\b,\L)$. As in the simple case, we
will show that
\begin{itemize}
\item[($*$)] there exists a \emph{simple} stratum $[\L,n,r,\g]$ equivalent
to $[\L,n,r,\b]$ such that $\psi_\g|_{U_{der}}=1$.
\end{itemize}
(Indeed, $\g$ will be minimal.) The proof is then the same as that
in the simple case, since we can use the simple case for $\g$. In this
case we invoke Lemma~\ref{maxdecomp} to show that 
$H^1\cap U=H^{[\frac n2]+1}\cap U$.

As in the simple case, the flag corresponding to the unipotent subgroup $U$ is
given by $V_i=\la v,\b v,\cdots, \b^{i-1} v\ra$, for some $v\in V$
with $h(v,\b v)=0$. So $v=v_1+v_2$, with $v_i\in V^i$ such that
$h(v_1,\b_1 v_1)=-h(v_2,\b_2 v_2)$. 
Note that, for $i=1,2$, $\{v_i,\b_i v_i\}$ is a
basis for $V^i$, with respect to which $\b_i$ has matrix
$$
\begin{pmatrix} 0& \l_i\\ 1&0 \end{pmatrix},
$$
for some $\l_i\in F$. Also, let $E_i$ denote the linear map in
$A^{ii}$ which send $\b_i v_i$ to $v_i$ and $v_i$ to $0$; then
$E_i\in\fa_n^{ii}$. 

Let $[\L,n,r,\g_0]$ be a skew simple stratum equivalent to
$[\L,n,r,\b]$, with $\g_0\in M_{sp}$, and let $X^2-\l$ be the minimal
polynomial of $\g_0$ over $F$. For $i=1,2$, let $\g_i\in A^{ii}$ have
matrix
$$
\begin{pmatrix} 0& \l\\ 1&0 \end{pmatrix},
$$
with respect to the basis $\{v_i,\b_i v_i\}$ of $V^i$. Since
$\b-\g_0\in\fa_{-r}$, we get that $\l-\l_i\in\fa_{-n-r}^{ii}$, for 
$i=1,2$, so $(\l-\l_i)E_i\in\fa_{-r}^{ii}$. Hence $\b_i-\g_i\in
\fa_{-r}^{ii}$ and $\g=\g_1+\g_2$ is as required, since $\g v=\b v$.

\medskip

This completes the proof of
Proposition~\ref{thetaagreeswithpsi}.\hfill$\blacksquare$

\medskip

{\bf Remarks\/} It surely will not have escaped the reader's notice
that the methods in each case are rather similar. It may well be
possible to unify the cases into a single proof but we have not been
able to do this. We also note that we could not have used \cite{BH} Lemma~2.10 here, since the proof given there unfortunately does not work. It seems likely that the result there is true (at least in the tame case, as here) but we have not (yet) been able to find a proof.


\bigskip

\small

Corinne Blondel\hfill Shaun Stevens \break \indent
C.N.R.S. - Th{\'e}orie des Groupes - Case 7012%
\hfill School of Mathematics \break\indent
Institut de Math{\'e}matiques de Jussieu%
\hfill University of East Anglia \break\indent
Universit{\'e} Paris 7\hfill Norwich NR4 7TJ \break\indent
F-75251 Paris Cedex 05\hfill United Kingdom \break\indent
France

\smallskip
blondel@math.jussieu.fr\hfill ginnyshaun@bigfoot.com \break\indent

\end{document}